\documentclass[12pt,reqno]{elsarticle}

\usepackage{amsfonts}
\usepackage{amsthm}
\usepackage{amsmath}
\usepackage{amssymb}
\usepackage{mathtools} 
\usepackage{xcolor}
\usepackage{soul} 
\usepackage{mathrsfs}

\usepackage[margin=0.9in]{geometry}
\usepackage{amsmath,amssymb,amsthm,graphicx,amsxtra, setspace}
\usepackage[utf8]{inputenc}
\usepackage{mathrsfs}
\usepackage{eucal}
\usepackage{hyperref}
\usepackage{upgreek}
\usepackage{mathtools}
\usepackage{multirow}
\usepackage{mathabx}
\usepackage{graphicx,type1cm,eso-pic,color}
\allowdisplaybreaks

\usepackage[pagewise]{lineno}

\usepackage{graphicx,eurosym}
\usepackage{hyperref}
\usepackage{mathtools}

\usepackage[cyr]{aeguill}

\colorlet{darkblue}{blue!50!black}

\hypersetup{
	colorlinks,%
	citecolor=blue,%
	filecolor=red,%
	linkcolor=darkblue,%
	urlcolor=blue,%
	pdfnewwindow=true,%
	pdfstartview={FitH}
}


\colorlet{darkblue}{red!100!black}

\newtheorem{theorem}{Theorem}[section]
\newtheorem{lemma}[theorem]{Lemma}
\newtheorem{proposition}[theorem]{Proposition}

\newtheorem{definition}[theorem]{Definition}

\newtheorem{remark}[theorem]{Remark}

\newtheorem{hypothesis}[theorem]{Hypothesis}

\let\originalleft\left
\let\originalright\right
\renewcommand{\left}{\mathopen{}\mathclose\bgroup\originalleft}
\renewcommand{\right}{\aftergroup\egroup\originalright}


\renewcommand{\d}{\/\mathrm{d}\/}

\def\w{\textbf{W}^{\varepsilon}_{{\theta}^{\varepsilon}}}

\def\L{\mathbb{L}}
\def\A{\mathrm{A}}

\def\C{\mathrm{C}}
\def\f{\boldsymbol{f}}

\def\B{\mathrm{B}}
\def\D{\mathrm{D}}
\def\y{\boldsymbol{y}}

\def\x{\boldsymbol{x}}

\def\v{\boldsymbol{v}}
\def\w{\boldsymbol{w}}
\def\W{\mathrm{W}}

\def\N{\mathbb{N}}

\def\V{\mathbb{V}}

\def\P{\mathrm{P}}
\def\u{\boldsymbol{u}}
\def\H{\mathbb{H}}
\def\n{\boldsymbol{n}}

\newcommand{\R}{\mathbb{R}}

\renewcommand{\d}{\/\mathrm{d}\/}

\begin{document}

		\begin{frontmatter}
		
		\title{Theory of weak asymptotic autonomy of pullback stochastic weak attractors and its applications to 2D stochastic Euler equations driven by multiplicative noise}
		
		\date{}
		

		\author[1]{Kush Kinra\fnref{fn1}}
		\ead{kkinra@ma.iitr.ac.in}
		\fntext[fn1]{Funding: CSIR, India, File No. 09/143(0938)/2019-EMR-I}
		
		
		\author[2]{Manil T. Mohan\corref{cor1}\fnref{fn2}}
		\ead{maniltmohan@gmail.com,maniltmohan@ma.iitr.ac.in}
		\cortext[cor1]{Corresponding author}
		\address{$^1$Department of Mathematics, Indian Institute of Technology Roorkee-IIT Roorkee,
			Haridwar Highway, Roorkee, Uttarakhand 247667, India}
		\fntext[fn2]{Funding: DST-INSPIRE: IFA17-MA110}
		
		

		\begin{abstract}
			\noindent The two dimensional stochastic Euler equations (EE) perturbed by a linear multiplicative noise of It\^o type on the bounded domain $\mathcal{O}$ have been considered in this work. Our first aim is to prove the existence of \textsl{global weak (analytic) solutions} for stochastic EE when the divergence free initial data  $\u^*\in\H^1(\mathcal{O})$, and the external forcing  $\f\in\mathrm{L}^2_{\mathrm{loc}}(\R;\mathbb{H}^1(\mathcal{O}))$. In order to prove the existence of weak solutions, a vanishing viscosity technique has been adopted. In addition, if $\mathrm{curl}\ \u^*\in\mathrm{L}^{\infty}(\mathcal{O})$ and $\mathrm{curl}\ \f\in\mathrm{L}^{\infty}_{\mathrm{loc}}(\R;\mathrm{L}^{\infty}(\mathcal{O}))$, we establish that the global weak (analytic) solution is unique. This work appears to be the first one to discuss the existence and uniqueness of global weak (analytic) solutions for stochastic EE driven by linear multiplicative noise. Secondly, we prove the existence of a \textsl{pullback stochastic weak attractor} for stochastic \textsl{non-autonomous}  EE using the abstract theory available in the literature. Finally, we propose an abstract theory for \textsl{weak asymptotic autonomy} of pullback stochastic weak attractors. Then we consider the 2D stochastic EE perturbed by a linear multiplicative noise as an example to discuss that how to prove the weak asymptotic autonomy for concrete stochastic partial differential equations.
			As EE  do not contain any dissipative term, the results on attractors (deterministic and stochastic) are available in the literature for dissipative (or damped) EE only. Since we are considering stochastic EE without dissipation, all the results of this work for 2D stochastic EE perturbed by a linear multiplicative noise are totally new. 
		\end{abstract}
		\begin{keyword}
			2D stochastic Euler equations, pullback stochastic weak attractors, weak asymptotic autonomy of pullback stochastic weak attractors, bounded domains.
			 \\ \vskip 0.2 cm
			\textit{MSC2020:} Primary 35B41, 35Q35; Secondary 37L55, 37N10, 35R60
		\end{keyword}  
	\end{frontmatter}

	\section{Introduction} \label{sec1}\setcounter{equation}{0}
	The motion of gases and liquids are described by fluid dynamics. If the material density of the  fluid is a constant (that is, the divergence free condition is satisfied by the fluid velocity), then we call it as an  incompressible fluid. The Motion of a viscous incompressible fluid flow is described by the well-known Navier-Stokes equations. In the case of inviscid flow (zero viscosity), the motion of the ideal incompressible fluid in a bounded  domain $\mathcal{O}\subset\R^2$ is described by the Euler equations (EE) \begin{equation}\label{Euler2D}
		\left\{
		\begin{aligned}
			\frac{\partial\u}{\partial t}+(\u\cdot\nabla)\u+\nabla p &=\boldsymbol{f},\ \ \ \text{ in } \ \mathcal{O}\times(\tau,\infty), \\ \nabla\cdot\u&=0, \hspace{5mm} \text{ in } \ \mathcal{O}\times[\tau,\infty), \\	
		\end{aligned}
		\right.
	\end{equation}
where $\u(x,t)\in \R^2$ stands for the velocity field, $p(x,t)\in\R$ denotes the pressure field and $\f(x,t)\in\R^2$ is an external forcing, for all $(x,t)\in\mathcal{O}\times(\tau,\infty)$.
\subsection{The stochastic model} 
This work aims to study the dynamics of the stochastic counterpart of the system \eqref{Euler2D} on a bounded domain. Let $\mathcal{O}\subset\R^2$ be a bounded domain with $\mathrm{C}^2$-boundary $\partial\mathcal{O}$. Given $\tau\in\R$, we consider the following non-autonomous 2D stochastic EE on $\mathcal{O}$:
\begin{equation}\label{1}
	\left\{
	\begin{aligned}
		\d \u+\left[(\u\cdot\nabla)\u+\nabla p\right]\d t&=\boldsymbol{f}\d t+ \sigma\u\d\mathrm{W}, \ \text{ in } \ \mathcal{O}\times(\tau,\infty), \\ \nabla\cdot\u&=0, \hspace{23mm} \text{ in } \ \mathcal{O}\times[\tau,\infty), \\
		\u\cdot\boldsymbol{n} &=0, \hspace{23mm} \text{ on } \ \partial\mathcal{O}\times[\tau,\infty), \\
		\u|_{t=\tau}&=\u_{\tau}, \hspace{21mm} \text{ in } \ \mathcal{O},\\			
	\end{aligned}
	\right.
\end{equation}
 where the coefficient $\sigma>0$ is known as the \emph{noise intensity} and $\boldsymbol{n}$ is the outward normal to $\partial\mathcal{O}$. Here, the stochastic integral is understood in the sense of  It\^o and $\W=\W(t,\omega)$ is a one-dimensional two-sided Wiener process defined on a filtered probability space $(\Omega, \mathscr{F}, (\mathscr{F}_{t})_{t\in\R}, \mathbb{P})$ (see Subsection \ref{EPDS} below).

\subsection{Literature survey} 
 The derivation of the Euler equations can be found in the several books and articles, for example, see \cite{CM,MB,MP}, etc. 
The author  in \cite{Kato} proved that there exists a unique classical solution of the non-stationary EE on a bounded domain of the plane. The author in \cite{Kozono} proved the existence of a unique global classical solution of 2D EE in a time-dependent domain (see \cite{HH} for existence and uniqueness of weak solutions). The global well-posedness  (existence, uniqueness and continuous dependence on the data) of the system  \eqref{Euler2D} in $\L^2(\mathcal{O})$ and $\mathbb{H}^1(\mathcal{O})$ is not known till  date. Several mathematicians have taken Euler equations into consideration and established that there exists a weak solution to the system \eqref{Euler2D} with divergence free initial data $\u_{\tau}\in\mathbb{H}^1(\mathcal{O})$ (cf. \cite{Bardos1972,HBFF2000,Judovic} etc.). In addition, if the divergence free initial data $\u_{\tau}\in\mathbb{H}^1(\mathcal{O})$ and $\mathrm{curl\ }\u_{\tau}\in\mathrm{L}^{\infty}(\mathcal{O})$, then there exists a unique weak solution to the system \eqref{Euler2D} (cf. \cite{Bardos1972,Yudovich1995,Yudovich2005} etc.). For 3D EE, we refer the interested readers to the works \cite{Elgindi,TKGP,RogerTemam}, etc., and references therein. L. Onsager in 1949 conjectured that the energy of an ideal incompressible flow will conserve if it is H\"older continuous with exponent greater than 1/3, and  is known as \emph{Onsager's conjecture}. If the domain is either torus or the whole space (that is, in the absence of physical boundary), the  Onsager's conjecture is proved in \cite{CWT}. Also, the works \cite{BT2018,BTW} deal with the Onsager's conjecture on bounded domains. 

Let us now discuss the solvability results of 2D stochastic Euler equations. The existence of solutions  for 2D stochastic EE on torus is studied in the work \cite{CC}  using nonstandard analysis. The authors in the works \cite{HB1999,HBFF1999,Brzezniak,Kim} proved the existence of weak/martingale/strong solutions (in probabilistic sense) for  2D stochastic EE on bounded domains. In addition, the uniqueness of solutions of 2D stochastic EE driven by additive noise is also shown in \cite{HBFF1999,Kim}. The local and global existence of smooth solutions for  stochastic EE driven by multiplicative noise have been demonstrated in \cite{HV}. Moreover, in \cite{HV}, the authors considered  the stochastic EE with a linear multiplicative noise of It\^o type separately and proved the existence of a global smooth solution. Some authors (for instance,  see \cite{JSR,KF,MTMSSS}, etc.) have considered 2D as well as 3D stochastic EE driven by L\'evy noise and proved the existence of local smooth solutions. For 3D stochastic EE, we refer the interested readers to the works \cite{BM,HZZ}, etc., also. Recently, the existence of invariant measures is established for 2D stochastic dissipative EE driven by additive noise on a torus  in \cite{HBBF2020}. For 2D stochastic EE driven by a linear multiplicative noise,  the existence of invariant measures will be addressed in a future work.

 The concepts of global (pullback) attractors of deterministic autonomous (non-autonomous) dynamical systems (DS) can be found in the  books \cite{CLR,CV,Robinson,R.Temam}, etc. The notion of attractors of deterministic DS was generalized to global/pullback random attractors for autonomous/non-autonomous compact/non-compact stochastic/random DS in the articles \cite{BCF,CDF,CF,SandN_Wang}, etc. The dynamics of deterministic dissipative EE can be found in the works \cite{CV1,CVZ,VVC,CIZ,ZSC}, etc.  Later, these abstract theories have been extensively applied in the articles \cite{KM3,PeriodicWang,rwang2,XC}, etc. to several physically relevant stochastic models.

 \subsection{Motivations}
 In order to apply the classical theory of global/pullback attractors (deterministic or random), one needs to define a semigroup (or evolution process) on a metric space $(\mathrm{X}, \d_{\mathrm{X}})$ which is continuous on $\mathrm{X}$. In this work, we are considering a stochastic DS associated with stochastic EE perturbed by a white noise which is neither dissipative nor has regularizing effects. We show the existence of a unique global weak (analytic) solution (see Definition \ref{GWS} below) with initial data from space $\mathcal{W}=\{\v\in\V: \nabla\land\v\in\mathrm{L}^{\infty}(\mathcal{O})\}$ (see Subsection \ref{sec3.1} for functional spaces). Furthermore, the stochastic flow is neither continuous nor compact with respect to the topology $\d_{\mathcal{W}}(\u,\v)=\|\u-\v\|_{\V}+\|\nabla\land\u-\nabla\land\v\|_{\mathrm{L}^{\infty}(\mathcal{O})}$. But the stochastic flow is compact with respect to an another topology $\delta(\u,\v)=\|\u-\v\|_{\H}$ on $\mathcal{W}$ which is  weaker  than $\d_{\mathcal{W}}$ in some sense. For such kind of stochastic flows, the author in \cite{HB2000} came up with a notion of attractor with respect to a pair of topologies $(\mathrm{d}_{\mathcal{W}},\delta)$ known as \textsl{stochastic weak attractor} (SWA) where the attractor is $\mathrm{d}_{\mathcal{W}}$-bounded and $\delta$-compact (see Theorem \ref{SWA} below). As the uniqueness holds for the stochastic EE when the initial data is in $\mathcal{W}$, the stochastic flow generated by stochastic EE satisfies the evolution property. But it does not enjoys the continuity in phase space $\mathcal{W}$ which is used to prove the invariance property of attractors in the classical theory of global/pullback random attractors. Despite of continuity in phase space $\mathcal{W}$, the author in \cite{HB2000} considered an another assumption which assures  the invariance and the existence of a $\mathrm{d}_{\mathcal{W}}$-bounded and $\delta$-compact SWA (see Hypothesis \ref{H1} below). Moreover, this abstract theory was applied to prove the existence of SWA for \textsl{2D stochastic  dissipative EE driven by additive noise} in a bounded domain (\cite[Theorem 4.6]{HB2000}). We use the abstract theory developed in  \cite{HB2000} to prove the existence of a minimal SWA for  \textsl{2D stochastic Euler equations perturbed by a linear multiplicative noise} (see Theorem \ref{M-Thm1} below). 
 
 The effect of time-dependent forcing creates the non-autonomous feature of dynamical systems which could be the most important feature distinguishing from autonomous. Fundamentally, if the forcing $\f(x,t)$ (time-dependent) in \eqref{1} asymptotically approaches to a time-independent forcing $\f_{\infty}$ in some sense, then the random dynamics of non-autonomous system converges to the random dynamics of autonomous system. This phenomena is called the \textsl{asymptotic autonomy} or \textsl{asymptotically autonomous robustness of random dynamics}. From now onward, we use the terminology \textsl{pullback deterministic (stochastic) weak attractors} for non-autonomous deterministic (stochastic) dynamical systems and \textsl{deterministic (stochastic) weak attractors} for autonomous deterministic (stochastic) dynamical systems. Our ambition is to establish the asymptotically autonomous robustness of pullback stochastic weak attractors of \eqref{1} when the non-autonomous forcing $\f_{\infty}$ satisfies the following hypotheses:

 \begin{hypothesis}\label{Hypo_f-N}
 	$\f(\cdot)$ converges to $\f_{\infty}$ in the following sense:
 	\begin{align*}
 		\lim_{\tau\to +\infty}\int_{0}^{\infty}
 		\|\f(t+\tau)-\f_{\infty}\|^2_{\L^2(\mathcal{O})}\d t=0.
 	\end{align*}
 \end{hypothesis}
 \begin{hypothesis}\label{Hypo-f-3}
 	For the time-dependent external forcing $\f\in\mathrm{L}^{2}_{\emph{loc}}(\R;\L^2(\mathcal{O}))$ and $\nabla\land\f\in\mathrm{L}^{\infty}_{\mathrm{loc}}(\R;\mathrm{L}^{\infty}(\mathcal{O}))$, there exist two numbers $\delta_1,  \delta_2\in[0,\frac{\sigma^2}{2})$ such that for all $t\in\R$
 	\begin{align}\label{G3}
 		&\sup_{s\geq \tau}\int_{-\infty}^{0} \bigg[e^{\delta_1 t}\|\f(t+s)\|^2_{\L^2(\mathcal{O})}+e^{\delta_1 t}\|\nabla\land\f(t+s)\|^2_{\mathrm{L}^2(\mathcal{O})}+e^{\delta_2 t}\|\nabla\land\f(t+s)\|_{\mathrm{L}^{\infty}(\mathcal{O})}\bigg]\d t<+\infty,
 	\end{align}
 	where $\nabla\land \f= \mathrm{curl}\f$.
 \end{hypothesis}

 \begin{remark}
 1. 	An example of Hypotheses \ref{Hypo_f-N} and \ref{Hypo-f-3} is
 	$\f(x,t)=\f_\infty(x)e^t+\f_\infty(x)$ with $\f_{\infty}\in\L^2(\mathcal{O})$ and $\nabla\land\f_{\infty}\in\mathrm{L}^{\infty}(\mathcal{O})$.
 	
 	2. Hypothesis \ref{Hypo_f-N} implies that $\sup\limits_{s\geq \tau}\int_{-\infty}^{0} e^{k t}\|\f(t+s)\|^2_{\L^2(\mathcal{O})}\d t<+\infty$ for all $k>0$ and $\tau\in\R$ (cf. \cite[Lemma 4.1]{RWYL}). Therefore the first term in the integral is finite for $\delta_1\in(0,\frac{\sigma^2}{2})$ under Hypothesis \ref{Hypo_f-N}. 
 \end{remark}

The concept of asymptotically autonomous robustness of pullback random attractors has been considered in several articles, see  \cite{KRM,LX1,RKM,WL,XL,ZL}, etc. for 2D Navier-Stokes equations, 2D MHD equations, $g$-Navier-Stokes equations and Brinkman-Forchheimer equations etc. As we have already discussed, the classical theory for pullback random attractors depends on the continuity of the flow map in the phase space $\mathcal{W}$ (cf. \cite{SandN_Wang}). But the flow map of the system \eqref{1} is not continuous in $\mathcal{W}$. Motivated from the work \cite{HB2000}, we introduce a notion of asymptotic autonomy of pullback stochastic weak attractors relative to a pair of $(\mathrm{d}_{\mathcal{W}},\delta)$ topologies or \textsl{weak asymptotic autonomy of pullback stochastic weak attractors}. By the weak asymptotic autonomy of pullback stochastic weak attractors, we means that the minimal pullback stochastic weak attractors $\{\mathscr{A}(t,\omega)\}_{t\in\R,\omega\in\Omega}$ ($\d_{\mathcal{W}}$-bounded and $\delta$-compact) converges towards the minimal stochastic weak attractor $\{\mathscr{A}_{\infty}(\omega)\}_{\omega\in\Omega}$ ($\d_{\mathcal{W}}$-bounded and $\delta$-compact) in the following sense:
 	\begin{align}
 	\lim_{t\to+\infty}\mathrm{dist}_{\mathcal{W}}^{\delta}(\mathscr{A}(t,\omega),\mathscr{A}_{\infty}(\omega))=0, \ \ \mathbb{P}\text{-a.e. } \omega\in\Omega,
 \end{align}
 where, $\mathrm{dist}_{\mathcal{W}}^{\delta}(\cdot,\cdot)$ denotes the Hausdorff semi-distance between two non-empty subsets of the Banach space $\mathcal{W}$ with respect to the metric $\delta$, that is, for non-empty sets $A,B\subset \mathcal{W}$ $$\mathrm{dist}_{\mathcal{W}}^{\delta}(A,B)=\sup_{a\in A}\inf_{b\in B} \delta(a,b).$$

We prove that three sufficient conditions guarantee the weak asymptotic autonomy of pullback stochastic weak attractors, see  Theorem \ref{SWA} below. Then we apply this abstract theory to 2D stochastic EE driven by a linear multiplicative noise to show how to check the sufficient conditions for concrete stochastic partial differential equations.

\subsection{Novelties} 
To the best of our knowledge, there are no works available in the literature for the random dynamics of 2D stochastic EE.  Most of the works available in the literature  related to the random dynamics and  asymptotic analysis are for 2D deterministic/stochastic dissipative EE,  cf. \cite{HB2000,HBFF2000,HBBF2020,VVC,CIZ,ZSC} etc. Since we are not considering any kind of damping term in stochastic EE, the  linear multiplicative structure of the It\^o type noise coefficient helps us to prove the results of this work.  This work appears to be the first one concerning the random dynamics of  2D stochastic EE perturbed by a linear multiplicative noise (both It\^o and Stratonovich noise).  The major aims and novelties of this work are:
\begin{itemize}
	\item [(i)] \textsl{Existence and uniqueness of the global weak (analytic) solution of stochastic Euler system \eqref{1} by using a vanishing viscosity method and compactness arguments.}
	
	\item [(ii)] \textsl{Existence of a minimal pullback stochastic weak attractor for the non-autonomous  and a minimal stochastic weak attractor for the autonomous  stochastic Euler system \eqref{1}. We achieve this goal by using the abstract theory developed in \cite{HB2000}.}
	
	\item [(iii)] \textsl{Weak asymptotic autonomy of pullback stochastic weak attractors of the stochastic Euler system \eqref{1}. In order to obtain this, we first establish an abstract theory of weak asymptotic autonomy of pullback stochastic weak attractors in Section \ref{sec2} and then apply this to the system \eqref{1}.}
\end{itemize}

\subsection{Difficulties and approaches}
Since Euler equations do not have any regularizing term, the existence of global weak (analytic) solutions has been proved by using a  vanishing viscosity method. Firstly, we convert the stochastic Euler system \eqref{1} into an equivalent pathwise deterministic system (see \eqref{EuEq} below) with the help of an stationary change of variable (see \eqref{COV} below). Next, we consider a Navier-Stokes type system (see system \eqref{NSE} below) and obtain the existence of global weak (analytic) solutions to the Euler system \eqref{1} as viscosity coefficient approaches to zero. We prove the existence of global weak (analytic) solutions of the stochastic Euler system \eqref{1} when the divergence free initial data $\u_{\tau}\in\H^1(\mathcal{O})$, and external forcing $\f\in\mathrm{L}^2_{\mathrm{loc}}(\R;\L^2(\mathcal{O}))$ and $\nabla\land\f\in\mathrm{L}^2_{\mathrm{loc}}(\R;\mathrm{L}^2(\mathcal{O}))$. In order to obtain the uniqueness of weak solutions, we need more regular divergence free initial data and external forcing. We establish the existence of a unique global weak (analytic) solution when divergence free initial data $\u_{\tau}\in\H^1(\mathcal{O})$, $\nabla\land\u_{\tau}\in\mathrm{L}^{\infty}(\mathcal{O})$, $\f\in\mathrm{L}^2_{\mathrm{loc}}(\R;\L^2(\mathcal{O}))$ and $\nabla\land\f\in\mathrm{L}^{\infty}_{\mathrm{loc}}(\R;\mathrm{L}^{\infty}(\mathcal{O}))$. 

For the existence of pullback stochastic weak attractors, we borrowed the abstract theory introduced in \cite{HB2000}. The linear structure of It\^o type multiplicative  noise   plays a crucial role to  show the existence of  $\d_{\mathcal{W}}$-bounded and $\delta$-compact absorbing set. On contrast to the works available in the literature \cite{HBFF2000,HBBF2020},  the presence of linear multiplicative noise helped us to prove the results for the system \eqref{1} without a linear damping term.  Since the weak asymptotic autonomy concept is not available in the literature, we develop an  abstract theory  in Section \ref{sec2} and then  apply it to show  the weak asymptotic autonomy of pullback stochastic weak attractors for the system \eqref{1}.

\subsection{Outline}
The rest of the sections are organized as follows: In the next section, we propose an abstract theory  for weak asymptotic autonomy of pullback stochastic weak attractors. In Section \ref{sec3}, we prove the existence of a unique global weak (analytic) solution of the stochastic Euler system \eqref{1}. The existence of minimal pullback stochastic weak attractors for the stochastic Euler system \eqref{1} is established in Section \ref{sec4}. In light of the abstract theory developed in Section \ref{sec2}, we demonstrate the weak asymptotic autonomy of pullback stochastic weak attractors in the final section.

\section{Abstract theory}\label{sec2}\setcounter{equation}{0}

The abstract theory of weak global attractors for autonomous dynamical systems was developed in \cite{HBFF2000}. Later, the author in \cite{HB2000} extended this theory to non-autonomous deterministic (stochastic) dynamical systems named as \textsl{deterministic (stochastic) weak attractors}. Let us first recall the abstract theory available in the work \cite{HB2000} for pullback stochastic weak attractors. Then we develop an abstract theory for weak asymptotic autonomy of  pullback stochastic weak attractors.

\subsection{Pullback stochastic weak attractors and stochastic weak attractors}\label{AT}

Let $(\mathcal{W},\d_{\mathcal{W}})$ be a complete metric space, $\delta$ be an another metric on $\mathcal{W}$ and $(\Omega,\mathscr{F},\mathbb{P})$ be a probability space.  Let $\{S(t,\tau,\omega)\}_{t\geq \tau, \omega\in\Omega}:\mathcal{W}\to\mathcal{W}$ be a family of mappings which satisfies the following hypothesis:
\begin{hypothesis}\label{H1}
	Let $\{S(t,\tau,\omega)\}_{t\geq \tau,\  \omega\in\Omega}:\mathcal{W}\to\mathcal{W}$ be a family of mappings, satisfying for $\mathbb{P}$-a.e. $\omega\in\Omega,$ the following two properties:
	\begin{itemize}
		\item [(i)] If a sequence $\{\w_n\}_{n\in\N}$ is $\d_{\mathcal{W}}$-bounded and $\delta$-convergent to $\w$ in $\mathcal{W},$ then $S(t,\tau,\omega)\w_n$ is $\delta$-convergent to $S(t,\tau,\omega)\w$, for all $t\geq \tau$.
		\item [(ii)] \emph{Evolution property}:
		\begin{align}
			S(t,r,\omega)S(r,\tau,\omega)\w=S(t,\tau,\omega)\w,  \text{ for all }  \tau\leq r\leq t\text{ and for all } \w\in\mathcal{W}.
		\end{align}
	\end{itemize}
\end{hypothesis}

Let $\mathscr{B}(\mathcal{W})$ be the $\sigma$-algebra of the metric space $\mathcal{W}$ and $\mathcal{H}$ be a separable metric space such that $\mathcal{W}\subset\mathcal{H}$ with continuous embedding. The metric $\delta$ is the metric endowed on $\mathcal{H}$ and $\mathcal{W}$ is closed in $\mathcal{H}$. Let us assume that the following hypothesis holds:

\begin{hypothesis}\label{H2}
	For all $t\in\R$ and $\w\in\mathcal{W}$, the mapping $(\tau,\omega)\mapsto S(t,\tau,\omega)\w$ is measurable from $((-\infty,t]\times\Omega,\mathscr{B}((-\infty,t])\times\mathscr{F})$ to $(\mathcal{W},\mathscr{B}(\mathcal{H})\cap\mathcal{W})$.
\end{hypothesis}

\begin{definition}
	For given $t\in\R$ and $\omega\in\Omega$, a bi-parametric set $\mathcal{B}(t,\omega)$ is called \emph{$\d_{\mathcal{W}}$-bounded absorbing set} at time $t$ if for all $\d_{\mathcal{W}}$-bounded set $B\subset\mathcal{W}$, there exists a time $\tau_0(B)$ depending  on $B$ such that 
	\begin{align*}
		S(t,\tau,\omega)B\subset\mathcal{B}(t,\omega),  \ \ \text{ for all } \ \tau\leq \tau_0.
	\end{align*}
\end{definition}

\begin{definition}
	A family of mappings $\{S(t,\tau,\omega)\}_{t\geq \tau,  \omega\in\Omega}$ is called \emph{asymptotically $\d_{\mathcal{W}}/\delta$ -compact} if we can find a measurable set $\widetilde{\Omega}\subset\Omega$ with $\mathbb{P}(\widetilde{\Omega})=1$ such that for all $t\in\R$ and $\omega\in\widetilde{\Omega}$, there exists a $\d_{\mathcal{W}}$-absorbing set $\mathcal{B}(t,\omega)$ with $\delta$-compactness. 
\end{definition}

The following theorem provides the existence of a measurable pullback stochastic weak attractor proved in \cite{HB2000}.
\begin{theorem}[{\cite[Theorem 3.2]{HB2000}}]\label{SWA}
	Let  Hypotheses \ref{H1} and \ref{H2} hold. Assume that for each $t\in\R$, there exists a measurable set $\Omega_{t}\subset\Omega$ with $\mathbb{P}(\Omega_{t})=1$, such that for all $\omega\in \Omega_t$, there exists a $\d_{\mathcal{W}}$-bounded and $\delta$-compact absorbing set. Then for $\mathbb{P}$-a.e. $\omega\in\Omega_t$, the set $$\mathscr{A}(t,\omega)=\overline{\bigcup_{B\subset\mathcal{W}}\Omega^{\delta}(B,t,\omega)}^{\delta},\ \ \text{ where }\ \ \Omega^{\delta}(B,t,\omega)=\bigcap_{r<t}\overline{\bigcup_{\tau<r}S(t,\tau,\omega)B}^{\delta},$$ is a measurable \textsl{pullback stochastic weak attractor}, that is,
	\begin{itemize}
		\item [(i)] $\mathscr{A}(t,\omega)$ is non-empty, it is $\d_{\mathcal{W}}$-bounded and $\delta$-compact,
		\item [(ii)] $\mathscr{A}(t,\omega)$ is invariant under the mapping $S(\cdot,\cdot,\cdot)$, that is, $$S(t,\tau,\omega)\mathscr{A}(\tau,\omega)=\mathscr{A}(t,\omega),\ \text{ for all }  \ t\geq\tau,$$
		\item [(iii)] For every $\d_{\mathcal{W}}$-bounded set $B\subset\mathcal{W}$, $$\lim_{\tau\to-\infty}\delta(S(t,\tau,\omega)B,\mathscr{A}(t,\omega))=0,\ \text{ for all }  \ t\geq\tau,$$
		\item [(iv)] $\mathscr{A}(t,\omega)$ is measurable with respect to the $\mathbb{P}$-completion of $\mathscr{F}$. 
		\item[(v)] $\mathscr{A}(t,\omega)$ is the minimal $\d_{\mathcal{W}}$-bounded and  $\delta$-closed set with property $\mathrm{(iii)}$, that is, if $\widehat{\mathscr{A}}(t,\omega)$ is an another  $\d_{\mathcal{W}}$-bounded and  $\delta$-closed set with the property $\mathrm{(iii)}$, then $\mathscr{A}(t,\omega)\subset\widehat{\mathscr{A}}(t,\omega)$.
	\end{itemize}
\end{theorem}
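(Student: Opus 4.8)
The plan is to realize $\mathscr{A}(t,\omega)$ from the pullback $\delta$-$\omega$-limit sets $\Omega^\delta(B,t,\omega)$, letting the $\d_{\mathcal{W}}$-bounded, $\delta$-compact absorbing set $\mathcal{B}(t,\omega)$ (which, being $\delta$-compact, is automatically $\delta$-closed) confine everything to a compact arena, and letting Hypothesis \ref{H1} supply the invariance. Fix $t\in\R$ and $\omega\in\Omega_t$. For a nonempty $\d_{\mathcal{W}}$-bounded set $B$, absorption yields $\tau_0(B)$ with $S(t,\tau,\omega)B\subset\mathcal{B}(t,\omega)$ for $\tau\le\tau_0$; hence for $r\le\tau_0$ the set $A_r:=\overline{\bigcup_{\tau<r}S(t,\tau,\omega)B}^{\delta}$ is a nonempty $\delta$-closed subset of the $\delta$-compact set $\mathcal{B}(t,\omega)$, and $\{A_r\}$ is non-increasing as $r\downarrow-\infty$, so the intersection $\Omega^\delta(B,t,\omega)=\bigcap_{r<t}A_r=\bigcap_{m\in\N}A_{-m}$ is nonempty, $\delta$-compact, and contained in $\mathcal{B}(t,\omega)$. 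Metrizability of $\delta$ then gives the standard sequential description: $\y\in\Omega^\delta(B,t,\omega)$ if and only if $\y$ is the $\delta$-limit of a sequence $S(t,\tau_k,\omega)\x_k$ with $\x_k\in B$ and $\tau_k\to-\infty$.

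Next I would deduce (i), (iii), (v). Every $\d_{\mathcal{W}}$-bounded $B$ lies in the closed ball $B_n:=\{\w:\d_{\mathcal{W}}(\w,\w_0)\le n\}$ for $n$ large, so $\Omega^\delta(B,t,\omega)\subset\Omega^\delta(B_n,t,\omega)$ and $\mathscr{A}(t,\omega)=\overline{\bigcup_{n\in\N}\Omega^\delta(B_n,t,\omega)}^{\delta}\subset\mathcal{B}(t,\omega)$; this set is nonempty (it contains $\Omega^\delta(B_1,t,\omega)$), $\d_{\mathcal{W}}$-bounded, and, as a $\delta$-closed subset of a $\delta$-compact set, $\delta$-compact, which is (i). For (iii), if pullback $\delta$-attraction failed for some $B$, there would be $\e>0$, $\tau_k\to-\infty$ and $\x_k\in B$ with $\delta(S(t,\tau_k,\omega)\x_k,\mathscr{A}(t,\omega))\ge\e$; eventually $S(t,\tau_k,\omega)\x_k\in\mathcal{B}(t,\omega)$, so a $\delta$-convergent subsequence has limit in $\Omega^\delta(B,t,\omega)\subset\mathscr{A}(t,\omega)$, a contradiction. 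For (v), any $\d_{\mathcal{W}}$-bounded $\delta$-closed $\widehat{\mathscr{A}}(t,\omega)$ with property (iii) contains each $\Omega^\delta(B,t,\omega)$ — the defining sequences satisfy $\delta(S(t,\tau_k,\omega)\x_k,\widehat{\mathscr{A}}(t,\omega))\le\delta(S(t,\tau_k,\omega)B,\widehat{\mathscr{A}}(t,\omega))\to0$ — and therefore contains the $\delta$-closure $\mathscr{A}(t,\omega)$.

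For the invariance (ii) I would use Hypothesis \ref{H1} systematically, always checking that the sequences fed into \ref{H1}(i) are $\d_{\mathcal{W}}$-bounded (which holds because they eventually enter the absorbing set). For the inclusion $S(t,\tau,\omega)\mathscr{A}(\tau,\omega)\subset\mathscr{A}(t,\omega)$: if $\y\in\Omega^\delta(B,\tau,\omega)$ is the $\delta$-limit of $S(\tau,\tau_k,\omega)\x_k$ ($\x_k\in B$, $\tau_k\to-\infty$), then this sequence is $\d_{\mathcal{W}}$-bounded, so \ref{H1}(i) gives $S(t,\tau,\omega)S(\tau,\tau_k,\omega)\x_k\xrightarrow{\delta}S(t,\tau,\omega)\y$, while the evolution property \ref{H1}(ii) identifies the left side with $S(t,\tau_k,\omega)\x_k$; hence $S(t,\tau,\omega)\y\in\Omega^\delta(B,t,\omega)\subset\mathscr{A}(t,\omega)$, and a further $\delta$-closure step (applying \ref{H1}(i) to $\d_{\mathcal{W}}$-bounded sequences in $\mathcal{B}(\tau,\omega)$ and using $\delta$-closedness of $\mathscr{A}(t,\omega)$) gives the claimed inclusion. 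For the reverse inclusion: given $\z\in\Omega^\delta(B,t,\omega)$, realized as the $\delta$-limit of $S(t,\tau_k,\omega)\x_k=S(t,\tau,\omega)\y_k$ with $\y_k:=S(\tau,\tau_k,\omega)\x_k$ eventually in the $\delta$-compact set $\mathcal{B}(\tau,\omega)$, a subsequence satisfies $\y_k\xrightarrow{\delta}\y\in\Omega^\delta(B,\tau,\omega)$; then \ref{H1}(i) and uniqueness of $\delta$-limits force $S(t,\tau,\omega)\y=\z$, so $\z\in S(t,\tau,\omega)\mathscr{A}(\tau,\omega)$, and one last $\delta$-closure argument gives $\mathscr{A}(t,\omega)\subset S(t,\tau,\omega)\mathscr{A}(\tau,\omega)$.

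I expect the main obstacle to be (iv), the measurability of $\omega\mapsto\mathscr{A}(t,\omega)$ with respect to the $\mathbb{P}$-completion of $\mathscr{F}$; this is where Hypothesis \ref{H2} is indispensable and essentially no dynamics is involved. I would proceed as follows: since $\bigcup_B\Omega^\delta(B,t,\omega)=\bigcup_{n\in\N}\Omega^\delta(B_n,t,\omega)$ is a countable union and $\Omega^\delta(B_n,t,\omega)=\bigcap_{m\in\N}\overline{\bigcup_{\tau<-m}S(t,\tau,\omega)B_n}^{\delta}$, it suffices to prove that each $\omega\mapsto\overline{\bigcup_{\tau<-m}S(t,\tau,\omega)B_n}^{\delta}$ is a measurable closed-valued multifunction into the separable metric space $(\mathcal{H},\delta)$, after which countable intersections and unions of such multifunctions in a Polish space, together with closures, remain measurable and yield the measurability of $\mathscr{A}(t,\omega)$. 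This last point is exactly where the joint $\mathscr{B}((-\infty,t])\times\mathscr{F}$-measurability of $(\tau,\omega)\mapsto S(t,\tau,\omega)\w$ into $(\mathcal{W},\mathscr{B}(\mathcal{H})\cap\mathcal{W})$ from Hypothesis \ref{H2}, combined with the projection theorem and measurable-selection theory, produces a measurable graph (after completion). I would carry out this bookkeeping along the lines of \cite[Theorem 3.2]{HB2000} and the deterministic prototype in \cite{HBFF2000}.
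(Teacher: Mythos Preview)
Your proposal is correct and, on the one point the paper actually proves, takes the same approach. The paper simply cites \cite[Theorem 3.2]{HB2000} for properties (i)--(iv) and only supplies an argument for (v): if $\widehat{\mathscr{A}}(t,\omega)$ is $\d_{\mathcal{W}}$-bounded, $\delta$-closed and satisfies (iii), then $\Omega^{\delta}(B,t,\omega)\subset\widehat{\mathscr{A}}(t,\omega)$ for every $\d_{\mathcal{W}}$-bounded $B$, hence the $\delta$-closure $\mathscr{A}(t,\omega)\subset\widehat{\mathscr{A}}(t,\omega)$. This is exactly your argument for (v).

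The rest of your proposal --- the nonemptiness and $\delta$-compactness of the $\omega$-limit sets via the nested intersection inside the $\delta$-compact absorbing set, the contradiction argument for (iii), the two-sided invariance via Hypothesis~\ref{H1}(i)--(ii), and the measurability via countable reduction and the projection/selection machinery --- is a faithful outline of the proof in \cite{HB2000} that the paper defers to. So you are not doing anything different from the paper; you are doing \emph{more}, reproducing the cited reference's argument rather than just invoking it.
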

\begin{proof}
	Properties $\mathrm{(i)}$-$\mathrm{(iv)}$ have been proved in \cite[Theorem 3.2]{HB2000}. Let us prove the property $\mathrm{(v)}$. Note that if  $\widehat{\mathscr{A}}(t,\omega)$ satisfies property $\mathrm{(iii)}$, then we have $\Omega^{\delta}(B,t,\omega)\subset\widehat{\mathscr{A}}(t,\omega)$  for all $\d_{\mathcal{W}}$-bounded sets $B\subset\mathcal{W}$. Since $\widehat{\mathscr{A}}(t,\omega)$ is $\d_{\mathcal{W}}$-bounded and $\delta$-closed, $\mathscr{A}(t,\omega)\subset\widehat{\mathscr{A}}(t,\omega)$.
\end{proof}
Let us now discuss the abstract theory for stochastic weak attractors. 
\begin{hypothesis}\label{H3}
	There exists a group $\{\theta_{t}\}_{t\in\R}$ of measure preserving transformations of  $(\Omega,\mathscr{F},\mathbb{P})$ 
	such that 
	\begin{align*}
		T(t,\tau,\omega)\w=T(t-\tau,0,\theta_{\tau}\omega)\w,\ \ \ \mathbb{P}\text{-a.s.},
	\end{align*}
	for all $t\geq\tau$ and $\w\in\mathcal{W}$.
\end{hypothesis}

If a family of mappings $\{T(t,\tau,\omega)\}_{t\geq \tau, \omega\in\Omega}$ satisfies Hypothesis \ref{H3}, then the attractor does not depend on the time parameter. The following theorem assures the existence of a measurable \textsl{stochastic weak attractor} when Hypothesis \ref{H3} is satisfied (see \cite[Theorem 2.2]{CDF} also). 
\begin{theorem}\label{SWA1}
	Let Hypotheses \ref{H1}, \ref{H2} and \ref{H3} hold. 
	 Assume that for $\mathbb{P}$-a.e. $\omega\in \Omega$, there exists a $\d_{\mathcal{W}}$-bounded and $\delta$-compact absorbing set at time $0$. Then for $\mathbb{P}$-a.e. $\omega\in\Omega$, the set $$\mathscr{A}_{\infty}(\omega)=\overline{\bigcup_{B\subset\mathcal{W}}\Omega^{\delta}(B,\omega)}^{\delta},\ \ \text{ where }\ \ \Omega^{\delta}(B,\omega)=\bigcap_{r<0}\overline{\bigcup_{\tau<r}T(0,\tau,\omega)B}^{\delta},$$ is a measurable \textsl{stochastic weak attractor}, that is,
	\begin{itemize}
		\item [(i)] $\mathscr{A}_{\infty}(\omega)$ is non-empty, it is $\d_{\mathcal{W}}$-bounded and $\delta$-compact,
		\item [(ii)] $\mathscr{A}_{\infty}(\omega)$ is invariant under the mapping $T(\cdot,\cdot,\cdot)$, that is,  $$T(t,\tau,\omega)\mathscr{A}_{\infty}(\theta_{\tau}\omega)=\mathscr{A}_{\infty}(\theta_{t}\omega),\ \text{ for all } \ t\geq\tau,$$
		\item [(iii)] For every $\d_{\mathcal{W}}$-bounded set $B\subset\mathcal{W}$, $$\lim_{\tau\to-\infty}\delta(T(t,\tau,\omega)B,\mathscr{A}_{\infty}(\theta_{t}\omega))=0,\ \text{ for all }  \ t\geq\tau,$$
		\item [(iv)] $\mathscr{A}_{\infty}(\omega)$ is measurable with respect to the $\mathbb{P}$-completion of $\mathscr{F}$. 
		\item[(v)] $\mathscr{A}_{\infty}(\omega)$ is the minimal $\d_{\mathcal{W}}$-bounded and $\delta$-closed set with property $\mathrm{(iii)}$, that is, if $\widehat{\mathscr{A}}_{\infty}(\omega)$ is an another  $\d_{\mathcal{W}}$-bounded and $\delta$-closed set with the property $\mathrm{(iii)}$, then $\mathscr{A}_{\infty}(\omega)\subset\widehat{\mathscr{A}}_{\infty}(\omega)$.
	\end{itemize}
\end{theorem}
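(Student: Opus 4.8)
The plan is to deduce Theorem~\ref{SWA1} from Theorem~\ref{SWA} by exploiting the cocycle structure of Hypothesis~\ref{H3}; essentially no new compactness or limiting argument is required beyond a change of variables in the defining $\Omega$-limit sets. First I would record a reformulation of Hypothesis~\ref{H3}: applying the identity $T(t,\tau,\omega)\w=T(t-\tau,0,\theta_{\tau}\omega)\w$ twice and using the group property $\theta_{\tau}=\theta_{\tau-t}\circ\theta_{t}$, one obtains, for $\mathbb{P}$-a.e.\ $\omega$,
\begin{align*}
	T(t,\tau,\omega)\w=T(0,\tau-t,\theta_{t}\omega)\w\qquad\text{for all }\tau\leq t\text{ and }\w\in\mathcal{W}.
\end{align*}
Since Hypothesis~\ref{H3} holds only $\mathbb{P}$-a.s.\ with a possibly $(t,\tau,\w)$-dependent exceptional set, here one first establishes the identity on a countable dense set of parameters and then extends it to all $t,\tau,\w$ by means of the joint measurability in Hypothesis~\ref{H2}, the separability of $\mathcal{W}$ and $\mathcal{H}$, and the $\delta$-continuity in Hypothesis~\ref{H1}(i), thereby fixing a single full-measure set of $\omega$ on which it holds.

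Next I would check that the hypotheses of Theorem~\ref{SWA} are met at every time $t\in\R$. Hypotheses~\ref{H1} and \ref{H2} are assumed outright, so only the absorbing set remains. Let $\mathcal{B}(0,\omega)$ be the $\d_{\mathcal{W}}$-bounded and $\delta$-compact absorbing set at time $0$, defined for $\omega$ in a full-measure set $\Omega_{0}$, and set $\mathcal{B}(t,\omega):=\mathcal{B}(0,\theta_{t}\omega)$ and $\Omega_{t}:=\theta_{-t}\Omega_{0}$. Since $\theta_{-t}$ is measure preserving, $\mathbb{P}(\Omega_{t})=1$; $\d_{\mathcal{W}}$-boundedness and $\delta$-compactness of $\mathcal{B}(t,\omega)$ are inherited, and absorption follows from the previous step, since $T(0,s,\theta_{t}\omega)B\subset\mathcal{B}(0,\theta_{t}\omega)$ for all $s\leq\tau_{0}(B)$ gives $T(t,\tau,\omega)B=T(0,\tau-t,\theta_{t}\omega)B\subset\mathcal{B}(0,\theta_{t}\omega)=\mathcal{B}(t,\omega)$ for all $\tau\leq\tau_{0}(B)+t$. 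Theorem~\ref{SWA} applied to $T$ then furnishes the minimal pullback stochastic weak attractor $\{\mathscr{A}(t,\omega)\}_{t\in\R,\,\omega\in\Omega}$ with properties (i)--(v) of that theorem.

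It remains to carry out the time-shift identification. Substituting $T(t,\tau,\omega)=T(0,\tau-t,\theta_{t}\omega)$ into $\Omega^{\delta}(B,t,\omega)=\bigcap_{r<t}\overline{\bigcup_{\tau<r}T(t,\tau,\omega)B}^{\delta}$ and changing variables $\tau\mapsto\tau-t$, $r\mapsto r-t$ yields $\Omega^{\delta}(B,t,\omega)=\Omega^{\delta}(B,\theta_{t}\omega)$, the right-hand side being exactly the set appearing in the statement with $\omega$ replaced by $\theta_{t}\omega$; taking the $\delta$-closed union over $\d_{\mathcal{W}}$-bounded $B\subset\mathcal{W}$ gives $\mathscr{A}(t,\omega)=\mathscr{A}_{\infty}(\theta_{t}\omega)$ with $\mathscr{A}_{\infty}(\omega):=\mathscr{A}(0,\omega)$, which matches the formula in the theorem. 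Properties (i), (iv) and (v) for $\mathscr{A}_{\infty}$ are now immediate from the corresponding properties of $\mathscr{A}(0,\cdot)$ in Theorem~\ref{SWA}; property (ii) follows by rewriting the invariance $T(t,\tau,\omega)\mathscr{A}(\tau,\omega)=\mathscr{A}(t,\omega)$ as $T(t,\tau,\omega)\mathscr{A}_{\infty}(\theta_{\tau}\omega)=\mathscr{A}_{\infty}(\theta_{t}\omega)$; and property (iii) follows by rewriting $\lim_{\tau\to-\infty}\delta(T(t,\tau,\omega)B,\mathscr{A}(t,\omega))=0$ as $\lim_{\tau\to-\infty}\delta(T(t,\tau,\omega)B,\mathscr{A}_{\infty}(\theta_{t}\omega))=0$.

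I expect the only genuinely delicate point to be the null-set bookkeeping in the first step: securing a single $\mathbb{P}$-full set of $\omega$ on which the cocycle identity, and hence $\mathscr{A}(t,\omega)=\mathscr{A}_{\infty}(\theta_{t}\omega)$, holds simultaneously for all $t\in\R$. This is handled exactly as in the proof of \cite[Theorem~2.2]{CDF}; once it is in place, the remaining steps are formal substitutions into the conclusions of Theorem~\ref{SWA}.
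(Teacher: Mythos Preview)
Your approach is correct and is precisely the standard route for deducing the autonomous statement from the pullback one via the cocycle identity of Hypothesis~\ref{H3}. The paper itself does not supply a proof of Theorem~\ref{SWA1}; it merely states the result and points to \cite[Theorem~2.2]{CDF}, which is the same reference you invoke for the null-set bookkeeping. So there is nothing to compare against beyond that citation, and your write-up effectively fills in the details the paper omits: shifting the absorbing set via $\mathcal{B}(t,\omega)=\mathcal{B}(0,\theta_t\omega)$, applying Theorem~\ref{SWA}, and identifying $\mathscr{A}(t,\omega)=\mathscr{A}_\infty(\theta_t\omega)$ by the change of variables $\tau\mapsto\tau-t$ in the $\Omega$-limit formula.

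One small remark: in your first paragraph you write that the cocycle identity must be extended from a countable dense set of parameters using ``the separability of $\mathcal{W}$ and $\mathcal{H}$'' and the $\delta$-continuity of Hypothesis~\ref{H1}(i). Be a little careful here: Hypothesis~\ref{H1}(i) gives $\delta$-continuity of $T(t,\tau,\omega)$ in the \emph{initial datum} along $\d_{\mathcal{W}}$-bounded, $\delta$-convergent sequences, not continuity in $(t,\tau)$. In the applications (and in the CDF framework) one instead uses continuity in $\tau$ coming from the evolution itself, as in Lemma~\ref{H222}, together with measurability in $\omega$. This does not affect the correctness of your overall strategy, but the justification for passing to a single full-measure set should rest on those ingredients rather than on Hypothesis~\ref{H1}(i).
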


\subsection{Weak asymptotic autonomy}\label{AT1}

In this subsection, our aim is to establish an abstract theory for the \emph{weak asymptotic autonomy} of \textsl{pullback stochastic weak attractors}. In applications to stochastic evolution equations perturbed by white noise, the sample space $\Omega$ in the probability space $(\Omega,\mathscr{F},\mathbb{P})$ is 
\begin{align}\label{Omega-Y}
	 \Omega=\{\omega\in \mathrm{C}(\R;\mathrm{Y}):\omega(0)=0\}=:\mathrm{C}_0(\R;\mathrm{Y}),
\end{align} 
where $\mathrm{Y}$ is some Banach space. In this case, there exists a group $\{\theta_{t}\}_{t\in\R}$ of measure preserving transformations of  $(\Omega,\mathscr{F},\mathbb{P})$ which is given by (\cite{Arnold,CDF})
\begin{align}
	(\theta_{t}\omega)(s)=\omega(t+s)-\omega(t), \ \ \ \text{ for all }\ \ s,t\in\R.
\end{align}
In fact, we are proposing an abstract theory on those  probability spaces where the existence of a group $\{\theta_{t}\}_{t\in\R}$ of measure preserving transformations  is known.
\begin{remark}
	Note that if the existence of a group $\{\theta_{t}\}_{t\in\R}$ of measure preserving transformations is known, it implies the existence of an $\Omega_{t}$  in Theorem \ref{SWA}. In this case, one can consider $\Omega_{t}$ as  the set $\theta_{t}\Omega$.  
\end{remark}

Let $\{\mathscr{A}(t,\omega)\}_{t\in\R,\omega\in\Omega}$ and $\mathcal{B}(t,\omega)$ be a \textsl{minimal pullback stochastic weak attractor} and \textsl{$\d_{\mathcal{W}}$-bounded and $\delta$-compact absorbing set} at time $t$, respectively, associated with the non-autonomous stochastic dynamical system $\{S(t,\tau,\omega)\}_{t\geq \tau, \omega\in\Omega}$ on $\mathcal{W}$. Also, let $\{\mathscr{A}_{\infty}(\omega)\}_{\omega\in\Omega}$ and $\mathcal{B}_{\infty}(\omega)$ be a \textsl{minimal stochastic weak attractor} and \textsl{$\d_{\mathcal{W}}$-bounded and $\delta$-compact absorbing set},  respectively, associated with the autonomous stochastic dynamical system $\{T(t,\tau,\omega)\}_{t\geq \tau, \omega\in\Omega}$ on $\mathcal{W}$. We show that $\{\mathscr{A}(t,\omega)\}_{t\in\R,\omega\in\Omega}$  upper semicontinuously forward converges to $\{\mathscr{A}_{\infty}(\omega)\}_{\omega\in\Omega}$ with respect to the metric $\delta$.

\begin{remark}
	For the sample space $\Omega=\mathrm{C}_0(\R;\mathrm{Y})$, as the existence of a group $\{\theta_{t}\}_{t\in\R}$ of measure preserving transformations of $(\Omega,\mathscr{F},\mathbb{P})$ is known, it is remarkable that the autonomous stochastic dynamical system $\{T(t,\tau,\omega)\}_{t\geq \tau, \omega\in\Omega}$ over $(\Omega,\mathscr{F},\mathbb{P})$ satisfies  Hypothesis \ref{H3}.
\end{remark}
The following theorem provides the abstract result for the weak asymptotic autonomy of pullback stochastic weak attractors.

\begin{theorem}\label{WAA-MT}
	Let $\{\mathscr{A}(\tau,\omega)\}_{\tau\in\R,\omega\in\Omega}$ and $\{\mathscr{A}_{\infty}(\omega)\}_{\omega\in\Omega}$ are $\d_{\mathcal{W}}$-bounded and $\delta$-compact. Suppose that the following properties hold.
	\begin{itemize}
		\item [(i)] $\overline{\bigcup\limits_{t\geq\tau}\mathscr{A}(t,\omega)}^{\delta}$ is $\d_{\mathcal{W}}$-bounded and $\delta$-compact.
		\item [(ii)]The mappings $S(\cdot,\cdot,\cdot)$ and $T(\cdot,\cdot,\cdot)$ satisfy,
		\begin{align}\label{Con}
			\lim\limits_{\tau\to+\infty}\delta(S(t+\tau,\tau,\omega)\w_{\tau}, T(t+\tau,\tau,\omega)\w_{0})=0,  \ \text{ for all } t\geq0 \text{ and } \omega\in\widetilde{\Omega},
		\end{align}
		whenever $\{\w_{\tau}\}_{\tau\in\R}$ is a $d_{\mathcal{W}}$-bounded sequence, $\w_{0}\in\mathcal{W}$ and $\lim\limits_{\tau\to+\infty}\delta(\w_{\tau}, \w_{0})=0,$ where $\widetilde{\Omega}\subset\Omega$ is a $\theta_{t}$-invariant full measure set. 
		\item [(iii)]There exists a $t_0>0$ such that $\mathcal{B}_{t_0}(\omega):=\overline{\bigcup\limits_{t\geq t_0}\mathcal{B}(t, \omega)}^{\delta}$ is $\d_{\mathcal{W}}$-bounded and $\delta$-compact set. 
	\end{itemize}
	Then, the \textbf{weak asymptotic autonomy} follows for $\mathscr{A}$, that is,
	\begin{align}\label{WAA}
		\lim_{t\to+\infty}\mathrm{dist}_{\mathcal{W}}^{\delta}(\mathscr{A}(t,\omega),\mathscr{A}_{\infty}(\omega))=0, \ \ \mathbb{P}\text{-a.e. } \omega\in\Omega,
	\end{align}
	where, $\mathrm{dist}_{\mathcal{W}}^{\delta}(\cdot,\cdot)$ denotes the Hausdorff semi-distance between two non-empty subsets of the Banach space $\mathcal{W}$ with respect to the metric $\delta$, that is, for non-empty sets $A,B\subset \mathcal{W}$ $$\mathrm{dist}_{\mathcal{W}}^{\delta}(A,B)=\sup_{a\in A}\inf_{b\in B} \delta(a,b).$$
\end{theorem}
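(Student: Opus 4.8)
The plan is to argue by contradiction. If \eqref{WAA} were to fail, one produces on a $\theta_t$-invariant full-measure set a sequence $a_n\in\mathscr{A}(t_n,\omega)$ with $t_n\uparrow+\infty$ that stays $\delta$-bounded away from $\mathscr{A}_\infty(\omega)$; the three hypotheses are then used, respectively, to extract a limit point $a^*$ of the $a_n$ inside a fixed $\delta$-compact set (hypotheses (i) and (iii)), to pull $a^*$ back to large but finite initial times through the invariance of $\mathscr{A}$ (Theorem \ref{SWA}(ii)), and to let the autonomous flow $T$ replace the non-autonomous flow $S$ along the pulled-back orbits (hypothesis (ii)); a final bookkeeping of the group $\{\theta_t\}$ via Hypothesis \ref{H3} identifies $a^*$ as a point of $\mathscr{A}_\infty(\omega)$, which is the desired contradiction.

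More precisely, suppose \eqref{WAA} fails. After shrinking $\widetilde{\Omega}$ to a $\theta_t$-invariant full-measure set on which $\mathscr{A}$, $\mathscr{A}_\infty$, $\mathcal{B}$ and $\mathcal{B}_\infty$ are all defined, there are $\omega\in\widetilde{\Omega}$, $\e_0>0$, $t_n\uparrow+\infty$ and $a_n\in\mathscr{A}(t_n,\omega)$ with $\delta(a_n,\mathscr{A}_\infty(\omega))\ge\e_0$ for all $n$. For $t_n\ge\tau$ one has $a_n\in\overline{\bigcup_{t\ge\tau}\mathscr{A}(t,\omega)}^{\delta}$, which is $\delta$-compact by hypothesis (i); hence, along a subsequence, $\delta(a_n,a^*)\to0$ for some $a^*\in\mathcal{W}$, and since $\mathscr{A}_\infty(\omega)$ is $\delta$-closed, $\delta(a^*,\mathscr{A}_\infty(\omega))\ge\e_0$.

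Next I would pull $a^*$ back. Fix $m\in\N$. By the invariance in Theorem \ref{SWA}(ii), $\mathscr{A}(t_n,\omega)=S(t_n,t_n-m,\omega)\mathscr{A}(t_n-m,\omega)$, so $a_n=S(t_n,t_n-m,\omega)c_n^{m}$ with $c_n^{m}\in\mathscr{A}(t_n-m,\omega)$; for $n$ large these points again lie in the $\d_{\mathcal{W}}$-bounded, $\delta$-compact set of hypothesis (i), so a diagonal argument over $m$ produces a single subsequence along which $\delta(c_n^{m},d_m)\to0$ for every $m$, with all $d_m$ in one $\d_{\mathcal{W}}$-bounded subset of $\mathcal{W}$ and $d_0=a^*$. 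Applying hypothesis (ii) with duration $m$, initial time $t_n-m\to+\infty$, $\w_{t_n-m}:=c_n^{m}$ and $\w_0:=d_m$ gives
\[
\delta\bigl(S(t_n,t_n-m,\omega)c_n^{m},\,T(t_n,t_n-m,\omega)d_m\bigr)\To0\quad\text{as }n\to\infty,
\]
and therefore $T(t_n,t_n-m,\omega)d_m\To a^*$ in $\delta$, for every $m\in\N$.

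Finally one must show $a^*\in\mathscr{A}_\infty(\omega)$, contradicting $\delta(a^*,\mathscr{A}_\infty(\omega))\ge\e_0$ and finishing the proof. Writing $T(t_n,t_n-m,\omega)d_m=T(m,0,\theta_{t_n-m}\omega)d_m$ via Hypothesis \ref{H3}, the $\theta_t$-invariance of $\widetilde{\Omega}$ keeps all base points $\theta_{t_n-m}\omega$ in $\widetilde{\Omega}$, while hypothesis (iii) together with the absorption property of $\mathcal{B}_\infty$ keeps the relevant orbits inside one $\delta$-compact set once $m$ is large. Transferring the pullback attraction $\lim_{\tau\to-\infty}\delta(T(0,\tau,\omega)\mathcal{B}_\infty(\omega),\mathscr{A}_\infty(\omega))=0$ (Theorem \ref{SWA1}(iii) with $t=0$) through the cocycle identity and the measure-preserving group $\{\theta_t\}$ then places $a^*$ in $\Omega^{\delta}(\mathcal{B}_\infty(\omega),\omega)\subset\mathscr{A}_\infty(\omega)$. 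I expect this last step to be the main obstacle: one must reconcile the \emph{forward-in-time} convergence $\mathscr{A}(t,\omega)\to\mathscr{A}_\infty(\omega)$ with the \emph{pullback} (and $\theta$-twisted) description of the autonomous attractor, i.e. carry out carefully the bookkeeping of the metric dynamical system $\{\theta_t\}$. By contrast, hypotheses (i) and (iii) are exactly the compactness inputs guaranteeing that the needed limit points exist and stay in a fixed $\delta$-compact set, and hypothesis (ii) is precisely what lets one pass from the non-autonomous flow $S$ to the autonomous flow $T$.
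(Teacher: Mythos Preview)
Your contradiction setup and the overall architecture match the paper's. The one structural difference is in how far you pull back along the invariance of $\mathscr{A}$, and this is precisely what creates the ``main obstacle'' you flag at the end.

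You pull $a_n\in\mathscr{A}(t_n,\omega)$ back a \emph{fixed} amount $m$, run a diagonal argument over $m$, and then face the task of placing the limit point $a^*$ inside $\mathscr{A}_\infty(\omega)$ from the relations $T(m,0,\theta_{t_n-m}\omega)d_m\to a^*$ with the base point $\theta_{t_n-m}\omega$ varying in $n$. The paper instead pulls $\w_n$ back by $t_n/2$: it writes $\w_n=S(t_n,t_n/2,\omega)\x_n$ with $\x_n\in\mathscr{A}(t_n/2,\cdot)$, uses hypothesis~(i) to pass to a $\delta$-limit $\x$ of the $\x_n$, applies hypothesis~(ii) along the sequence of initial times $t_n/2\to+\infty$ to get $\delta\bigl(\w_n,T(t_n,t_n/2,\omega)\x\bigr)\to0$, and then---because the $T$-evolution time $t_n/2$ also tends to $+\infty$ and $\x$ sits (via hypothesis~(iii)) in a single $\d_{\mathcal{W}}$-bounded set---invokes the attracting property of $\mathscr{A}_\infty$ directly to obtain $\delta\bigl(T(t_n,t_n/2,\omega)\x,\mathscr{A}_\infty(\omega)\bigr)\to0$. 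A triangle inequality then bounds $\delta(\w_n,\mathscr{A}_\infty(\omega))$ by $2\varepsilon$, contradicting the assumed lower bound $3\varepsilon$. The paper never needs to identify a limit point of the $\w_n$ as an element of $\mathscr{A}_\infty(\omega)$.

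So the paper's $t_n/2$ trick buys exactly what you are missing: by letting both the initial time \emph{and} the evolution time diverge simultaneously, the autonomous pullback attraction applies in one stroke, and the delicate $\theta$-bookkeeping you anticipate is bypassed. Your fixed-$m$ route may be salvageable, but it is longer and the final identification $a^*\in\mathscr{A}_\infty(\omega)$ is a genuine extra step that the paper simply avoids.
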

\begin{proof}
	In order to prove the convergence given in \eqref{WAA}, it is enough to show that $\mathbb{P}(\Omega_0)=1$, where $$\Omega_0=\left\{\omega\in\Omega:\lim\limits_{t\to+\infty}\mathrm{dist}_{\mathcal{W}}^{\delta}\left(\mathscr{A}(t,\omega),\mathscr{A}_{\infty}(\omega)\right)=0\right\}.$$ Let us assume that $\mathbb{P}(\Omega_{0})<1,$ then $\mathbb{P}(\Omega\backslash\Omega_{0})>0$. Let $\Omega_1=(\Omega\backslash\Omega_{0})\cap\widetilde{\Omega}$, where $\widetilde{\Omega}$ is the $\theta_{t}$-invariant full measure set given in \eqref{Con}. Then $\mathbb{P}(\Omega_1)>0$ and $\theta_{s}\Omega_1\subset\widetilde{\Omega}$, for all $s\in\R$. 
	
	Let $\omega\in\Omega_1$ be fixed. Since $\omega\notin\Omega_{0}$, we can find $\varepsilon>0$ and $t_n\geq 2 t_0>0$ (WLOG) with $t_n\uparrow\infty$ such that 
	\begin{align*}
		\mathrm{dist}_{\mathcal{W}}^{\delta}\left(\mathscr{A}(t_n,\omega),\mathscr{A}_{\infty}(\omega)\right)\geq 3\varepsilon,\ \ \text{ for all } \ n\in\N.
	\end{align*}
	
	In view of $\d_{\mathcal{W}}$-boundedness and $\delta$-compactness of $\mathscr{A}(t_n,\omega)$, there exists $\w_n\in\mathscr{A}(t_n,\omega)$ such that 
	\begin{align}\label{C1}
		\delta\left(\w_n,\mathscr{A}_{\infty}(\omega)\right)=\mathrm{dist}_{\mathcal{W}}^{\delta}\left(\mathscr{A}(t_n,\omega),\mathscr{A}_{\infty}(\omega)\right)\geq 3\varepsilon,\ \ \text{ for all } n\in\N.
	\end{align}
	
	By the invariance property of pullback stochastic weak attractors, we also have that there exists $\x_n\in\mathscr{A}\left(\frac{t_n}{2},\theta_{-\frac{t_n}{2}}\omega\right)$ such that $S(t_n,\frac{t_n}{2},\omega)\x_n=\w_n.$
	Now by condition $(\mathrm{i})$, there exists an element $\x\in \overline{\bigcup\limits_{t_n\geq 2t_0}\mathscr{A}\left(\frac{t_n}{2},\theta_{-\frac{t_n}{2}}\omega\right)}^{\delta}$ with 
	\begin{align}\label{C2}
		\lim\limits_{n\to+\infty}\delta(\x_n,\x)=0.
	\end{align}
	
	From \eqref{C2} and condition $(\mathrm{ii})$, we get
	\begin{align*}
		\lim_{n\to+\infty}\delta\left(S\left(t_n,\frac{t_n}{2},\omega\right)\x_{n}, T\left(t_n,\frac{t_n}{2},\omega\right)\x\right)=0,
	\end{align*}
	which confirms the existence of an $N_1\in\N$ such that 
	\begin{align}\label{C3}
		\delta\left(S\left(t_n,\frac{t_n}{2},\omega\right)\x_{n}, T\left(t_n,\frac{t_n}{2},\omega\right)\x\right)\leq\varepsilon, \ \ \ \text{ for all } \ n\geq N_1.	
	\end{align}
	
	By the invariance property of $\mathscr{A}$ and the absorption property of absorbing set $\mathcal{B}$, there exists a $\tau<0$ such that 
	\begin{align*}
		\mathscr{A}\left(\frac{t_n}{2},\theta_{-\frac{t_n}{2}}\omega\right)\ =\ S\left(\frac{t_n}{2},\tau,\omega\right)\mathscr{A}\left(\tau,\theta_{\tau-t_n}\omega\right)\ \subset\ \mathcal{B}\left(\frac{t_n}{2}, \theta_{-\frac{t_n}{2}}\omega\right),
	\end{align*}  
	which gives 
	\begin{align}\label{C4}
		\x\in	\overline{\bigcup\limits_{t_n\geq 2t_0}\mathscr{A}\left(\frac{t_n}{2},\theta_{-\frac{t_n}{2}}\omega\right)}^{\delta}\ \subset\ \overline{\bigcup\limits_{t_n\geq 2t_0}\mathcal{B}\left(\frac{t_n}{2}, \theta_{-\frac{t_n}{2}}\omega\right)}^{\delta}=\mathcal{B}_{t_0}\left(\theta_{-\frac{t_n}{2}}\omega\right).
	\end{align}  
	
	By condition $(\mathrm{iii})$, \eqref{C4} and the attracting property of $\mathscr{A}_{\infty}$, there exists $N_2\in\N$ such that 
	\begin{align}\label{C5}
		\delta\left(T\left(t_n,\frac{t_n}{2},\omega\right)\x, \mathscr{A}_{\infty}(\omega)\right)\leq\varepsilon,  \ \ \ \text{ for all } \ n\geq N_2.	
	\end{align}
	
	Combining \eqref{C3} and \eqref{C5}, we deduce for all $n\geq N:=\max\{N_1,N_2\}$ that 
	\begin{align*}
		&\mathrm{dist}_{\mathcal{W}}^{\delta}\left(\mathscr{A}(t_n,\omega),\mathscr{A}_{\infty}(\omega)\right)\nonumber\\&=\delta\left(\w_n,\mathscr{A}_{\infty}(\omega)\right)\nonumber\\&\leq \delta\left(S\left(t_n,\frac{t_n}{2},\omega\right)\x_{n}, T\left(t_n,\frac{t_n}{2},\omega\right)\x\right)+\delta\left(T\left(t_n,\frac{t_n}{2},\omega\right)\x, \mathscr{A}_{\infty}(\omega)\right)\nonumber\\&\leq 2\varepsilon,
	\end{align*}
	which contradicts \eqref{C1}.
\end{proof}

	\section{Global Solvability of 2D stochastic Euler equations driven by multiplicative noise}\label{sec3}\setcounter{equation}{0}
	We first provide the necessary function spaces needed for the further analysis. Then, we define a transformation which helps to convert our stochastic system into an equivalent pathwise deterministic system. Finally, we prove the existence and uniqueness of the 2D stochastic Euler system \eqref{1}. 
	\subsection{Function spaces} \label{sec3.1}
	For $p\in[1,\infty]$, the scalar as well as vector valued  Lebesgue spaces will be denoted by $\mathrm{L}^p(\mathcal{O})=\mathrm{L}^p(\mathcal{O};\mathbb{R})$ and  $\L^p(\mathcal{O})=\mathrm{L}^p(\mathcal{O};\mathbb{R}^2)$, respectively. 
	For $s>0$ and $p\in[1,\infty]$,  the usual scalar and vector valued Sobolev spaces will be represented by $\mathrm{W}^{s,p}(\mathcal{O})=\mathrm{W}^{s,p}(\mathcal{O};\mathbb{R})$ and $\mathbb{W}^{s,p}(\mathcal{O})=\mathrm{W}^{s,p}(\mathcal{O};\mathbb{R}^2)$,  respectively.  For $p=2$, the Hilbertian  Sobolev spaces will be denoted by $\mathrm{H}^{s}(\mathcal{O})=\mathrm{W}^{s,2}(\mathcal{O})$ and $\mathbb{H}^{s}(\mathcal{O})=\mathbb{W}^{s,2}(\mathcal{O})$.  
	Let us define 
	\begin{align*} 
		\mathbb{H}&:=\{\u\in\L^2(\mathcal{O}):\nabla\cdot\u=0 \ \text{ and }\ \u\cdot\boldsymbol{n}\big|_{\partial\mathcal{O}}=0\},\\
		\mathbb{V}&:=\{\u\in\H^1(\mathcal{O}):\nabla\cdot\u=0  \ \text{ and }\  \u\cdot\boldsymbol{n}\big|_{\partial\mathcal{O}}=0\},
	\end{align*}
	where $\boldsymbol{n}$ is the unit outward normal to the boundary $\partial\mathcal{O}$, and $\u\cdot\n\big|_{\partial\mathcal{O}}$ should be understood in the sense of trace in $\H^{-1/2}(\partial\mathcal{O})$ (cf. \cite[Chapter 1, Theorem 1.2]{Temam}).
	The spaces $\H$ and $\V$ are endowed with the norms
	\begin{align*}
		\|\u\|_{\H}^2=\int_{\mathcal{O}}|\u(x)|^2\d x\ \text{ and }\ \|\u\|_{\V}^2=\int_{\mathcal{O}}\left[|\u(x)|^2+|\nabla\u(x)|^2\right]\d x,
	\end{align*}
	respectively. Let $(\cdot,\cdot)$ denote the inner product in the Hilbert space $\H$ and $\langle \cdot,\cdot\rangle $ represent the induced duality between the spaces $\V$  and its dual $\V'$. Note that $\H$ can be identified with its dual $\H'$. Furthermore, we have the Gelfand triple $\V\hookrightarrow\H \cong\H'\hookrightarrow\V'$ with dense and continuous embedding, and the embedding $\V\hookrightarrow\H$ is compact.

	\subsection{Equivalent pathwise deterministic system}\label{EPDS}
	In the system \eqref{1}, $\W(t,\omega)$ is the standard scalar Wiener process on the probability space $(\Omega, \mathscr{F}, \mathbb{P}),$ where $$\Omega=\{\omega\in \mathrm{C}(\R;\R):\omega(0)=0\}=:\C_0(\R;\R),$$ endowed with the compact-open topology given by the complete metric
	\begin{align*}
		d_{\Omega}(\omega,\omega'):=\sum_{m=1}^{\infty} \frac{1}{2^m}\frac{\|\omega-\omega'\|_{m}}{1+\|\omega-\omega'\|_{m}},\ \text{ where }\  \|\omega-\omega'\|_{m}:=\sup_{-m\leq t\leq m} |\omega(t)-\omega'(t)|,
	\end{align*}
	and $\mathscr{F}$ is the Borel sigma-algebra induced by the compact-open topology of $(\Omega,d_{\Omega}),$ $\mathbb{P}$ is the two-sided Wiener measure on $(\Omega,\mathscr{F})$. Also, define $\{\theta_{t}\}_{t\in\R}$ by 
	\begin{equation*}
		\theta_{t}\omega(\cdot)=\omega(\cdot+t) -\omega(t), \ \ \   t\in\R\ \text{ and }\ \omega\in\Omega.
	\end{equation*}
	Hence, $(\Omega,\mathscr{F},\mathbb{P},\{\theta_{t}\}_{t\in\R})$ is a metric dynamical system.

	Let us now consider
	\begin{align}\label{OU1}
		y(\theta_{t}\omega) =  \int_{-\infty}^{t} e^{-(t-\xi)}\d \W(\xi), \ \ \omega\in \Omega,
	\end{align} which is the stationary solution of the one dimensional Ornstein-Uhlenbeck equation
	\begin{align}\label{OU2}
		\d y(\theta_t\omega) +  y(\theta_t\omega)\d t =\d\W(t).
	\end{align}
	It is known from \cite{FAN} that there exists a $\theta$-invariant subset $\widetilde{\Omega}\subset\Omega$ of full measure such that $y(\theta_t\omega)$ is continuous in $t$ for every $\omega\in \widetilde{\Omega},$ and
	\begin{align}
		\lim_{t\to \pm \infty} \frac{\omega(t)}{|t|}=\lim_{t\to \pm \infty} \frac{|y(\theta_t\omega)|}{|t|}&=	\lim_{t\to \pm \infty} \frac{1}{t} \int_{0}^{t} y(\theta_{\xi}\omega)\d\xi =\lim_{t\to \infty} e^{-\delta t}|y(\theta_{-t}\omega)| =0,\label{Z3}
	\end{align}
	for all $\delta>0$, where $\Gamma$ is the Gamma function. For further analysis of this work, we do not distinguish between $\widetilde{\Omega}$ and $\Omega$. 
	
	Define a new variable $\v$ by 
	\begin{align}\label{COV}
		\v(t;\tau,\omega,\v_{\tau})=e^{-\sigma y(\theta_{t}\omega)}\u(t;\tau,\omega,\u_{\tau}) \ \ \
		\text{ with }
		\ \ \	\v_{\tau}=e^{-\sigma y(\theta_{\tau}\omega)}\u_{\tau},
	\end{align}
	where $\u(t;\tau,\omega,\u_{\tau})$ and $y(\theta_{t}\omega)$ are the solutions of \eqref{1} and \eqref{OU2}, respectively. Then $\v(\cdot):=\v(\cdot;\tau,\omega,\v_{\tau})$ satisfies:
	\begin{equation}\label{EuEq}
		\left\{
		\begin{aligned}
			\frac{\d\v}{\d t}+\left[\frac{\sigma^2}{2}-\sigma y(\theta_{t}\omega)\right]\v+e^{\sigma y(\theta_{t}\omega)}(\v\cdot\nabla)\v+e^{-\sigma y(\theta_{t}\omega)}\nabla p&=e^{-\sigma y(\theta_{t}\omega)}\f ,\hspace{5mm} \text{ in } \ \mathcal{O}\times(\tau,\infty), \\ \nabla\cdot\v&=0, \hspace{17mm} \text{ in } \ \mathcal{O}\times(\tau,\infty), \\
			\v\cdot\boldsymbol{n} &=0,\hspace{17mm} \text{ on } \ \partial\mathcal{O}\times(\tau,\infty), \\
			\v|_{t=\tau}&=\v_{\tau}, \hspace{15mm} \text{ in } \mathcal{O},
		\end{aligned}
		\right.
	\end{equation}
	in the distributional sense, where $\sigma>0$. 
	It can be easily seen from  \eqref{EuEq} that  in $\V'$
	\begin{equation}\label{Div-free}
		\left\{
		\begin{aligned}
			\frac{\d\v}{\d t}+\left[\frac{\sigma^2}{2}-\sigma y(\theta_{t}\omega)\right]\v+e^{\sigma y(\theta_{t}\omega)}(\v\cdot\nabla)\v&=e^{-\sigma y(\theta_{t}\omega)}\boldsymbol{f},  \ \ \ \ \ t>\tau, \\
			\v|_{t=\tau}&=\v_{\tau}.
		\end{aligned}
		\right.
	\end{equation}
	Next, we define the vorticity $$\varrho:=\nabla\land\v=\frac{\partial v_{2}}{\partial x_1}-\frac{\partial v_{1}}{\partial x_2}.$$ Then the vorticity equation corresponding to \eqref{EuEq} is given by
	\begin{equation}\label{vorticity}
		\left\{
		\begin{aligned}
			\frac{\d\varrho}{\d t}+\left[\frac{\sigma^2}{2}-\sigma y(\theta_{t}\omega)\right]\varrho+e^{\sigma y(\theta_{t}\omega)}(\v\cdot\nabla\varrho)&=e^{-\sigma y(\theta_{t}\omega)}\nabla\land\boldsymbol{f},  \ \ \ \ \ t>\tau, \\
			\varrho|_{t=\tau}&=\nabla\land\v_{\tau},
		\end{aligned}
		\right.
	\end{equation}
	in $\mathrm{L}^2(\mathcal{O})$, since for the 2D flows the vortex stretching term is zero.
	
	\begin{remark}
		One can use an another transformation  to change the system \eqref{1} into an equivalent pathwise deterministic  system.  For a given $t\in\R$ and $\omega\in \Omega$, let $\hat{z}(t,\omega)=e^{-\sigma\omega(t)}$. Then, $\hat{z}$ satisfies the equation 
		\begin{align}\label{Trans1}
			\d \hat{z}=\frac{\sigma^2}{2} \hat{z} \d t-\sigma \hat{z}\d \W.
		\end{align}
		Let $\v$ be a new random variable given by 
		\begin{align}\label{COV2}
			\v(t;\tau,\omega,\v_{\tau})= \hat{z}(t,\omega)\u(t;\tau,\omega,\u_{\tau}) \ \ \
			\text{ with }
			\ \ \	\v_{\tau}= \hat{z}(\tau,\omega)\u_{\tau},
		\end{align}
		where $\u(t;\tau,\omega,\u_{\tau})$ and $ \hat{z}(t,\omega)$ are the solutions of \eqref{1} and \eqref{Trans1}, respectively. Then the new random variable $\v(\cdot;\tau,\omega,\v_{\tau})$ satisfies the following system:
		\begin{equation}\label{CEE1}
			\left\{
			\begin{aligned}
				\frac{\d\v}{\d t} +\frac{\sigma^2}{2}\v+\frac{1}{\hat{z}(t,\omega)}\B\big(\v\big)+\hat{z}(t,\omega)\nabla p&= \hat{z}(t,\omega)\f , \quad t> \tau, \\ 
				\v|_{t=\tau}&=\v_{\tau}, \qquad \quad x\in \mathcal{O},
			\end{aligned}
			\right.
		\end{equation}
		which is a pathwise deterministic system and is equivalent to the system \eqref{1}. Even though  the system \eqref{CEE1} is easier to handle than  the system \eqref{EuEq}, there is a technical difficulty in using the transformation  \eqref{COV2} in the context of attractors. Since $\omega(t)$ is identified with $\mathrm{W}(t,\omega)$, the change of variable given by \eqref{COV2} involves the Wiener process explicitly. Therefore, $\hat{z}(t,\omega)$ is not a stationary process and the change of variable given by \eqref{COV2} is not stationary. This fact creates troubles if one uses the change of variable given by \eqref{COV2} in order to have conjugated random dynamical systems. Since $z(t,\omega)=e^{-\sigma y(\theta_{t}\omega)}$ is stationary, the change of variable is given by a homeomorphism (also called conjugation) which transforms one random dynamical system into an another equivalent one (cf. \cite[Proposition B.9]{GLS2}). For this reason, it is appropriate to use the change of variable mentioned in \eqref{COV}.  We also point out here that the authors in \cite{HV} used the  transformation given in \eqref{Trans1} to study the global (or local) existence of smooth pathwise solutions of 2D (or 3D) Euler equations. But they are not investigating  any qualitative properties of the solutions (cf. \cite{MRRZXZ} also). The authors in \cite{KKMTM} used the transformation given in \eqref{COV} to study the random dynamics of 2D SNSE on the whole space. 
	\end{remark}

Here, the solvability of stochastic Euler system \eqref{1} has been proved by a vanishing viscosity method. Let us consider the following \textsl{Navier-Stokes type equations}:
\begin{equation}\label{NSE}
	\left\{
	\begin{aligned}
		\frac{\d\v_{\nu}}{\d t}-\nu\Delta\v_{\nu}+\left[\frac{\sigma^2}{2}-\sigma y(\theta_{t}\omega)\right]\v_{\nu}&+e^{\sigma y(\theta_{t}\omega)}(\v_{\nu}\cdot\nabla)\v_{\nu}\\+e^{-\sigma y(\theta_{t}\omega)}\nabla p_{\nu}&=e^{-\sigma y(\theta_{t}\omega)}\f , \text{ in } \ \mathcal{O}\times(\tau,\infty), \\ \nabla\cdot\v_{\nu}&=0, \ \ \ \text{ in } \ \mathcal{O}\times(\tau,\infty), \\
		\nabla\land\v_{\nu} &=0,\ \ \ \text{ on } \ \partial\mathcal{O}\times(\tau,\infty),\\
		\v_{\nu}\cdot\boldsymbol{n} &=0,\ \ \ \text{ on } \ \partial\mathcal{O}\times(\tau,\infty), \\
		\v_{\nu}|_{t=\tau}&=\v_{\tau},\ \   \text{ in } \mathcal{O},
	\end{aligned}
	\right.
\end{equation}
in the distributional sense, where $\sigma>0$, $0<\nu\leq\nu_0$ with $\nu_0$ sufficiently small (will be specified later) and 
$\nabla\land\v_{\nu}=\frac{\partial v_{\nu,2}}{\partial x_1}-\frac{\partial v_{\nu,1}}{\partial x_2}.$

\subsection{Bilinear form and linear operator}\label{LO}
Let the space $\H_{\mathrm{div}}(\mathcal{O})$ be defined by  $$\H_{\mathrm{div}}(\mathcal{O}):=\{\u_{\nu}\in\L^2(\mathcal{O}):\nabla\cdot\u_{\nu}\in\mathrm{L}^2(\mathcal{O})\},$$  endowed with the inner product $(\u_{\nu},\v_{\nu})_{\mathbb{H}_{\mathrm{div}}}=(\u_{\nu},\v_{\nu})+(\mathrm{div\ }\u_{\nu},\mathrm{div\ }\v_{\nu})$. Let us  define the bilinear form $a(\cdot,\cdot):\V\times\V\to\R$ by
\begin{align}\label{boundaryA}
	a(\u_{\nu},\v_{\nu})&:=\nu\int_{\mathcal{O}}\nabla\u_{\nu}(x):\nabla\v_{\nu}(x)\d x-\nu\int_{\partial\mathcal{O}}\kappa(\rho)\gamma_0(\u_{\nu}(\rho))\cdot\gamma_0(\v_{\nu}(\rho))\d \rho\nonumber\\&\qquad+\frac{\sigma^2}{2}\int_{\mathcal{O}}\u_{\nu}(x)\cdot\v_{\nu}(x)\d x, 
\end{align}
for all $\u_{\nu},\v_{\nu}\in\V$, where $\kappa(\cdot)\in\mathrm{L}^{\infty}(\partial\mathcal{O})$ is the curvature and $\gamma_0\in\mathcal{L}(\H_{\mathrm{div}}(\mathcal{O});\mathrm{H}^{-\frac{1}{2}}(\partial\mathcal{O}))\cap\mathcal{L}(\H^s(\mathcal{O});\mathrm{H}^{s-\frac{1}{2}}(\partial\mathcal{O})),$ for $s>0$ is the trace operator (cf. \cite{Bardos1972,HBFF2000,JLL-EDO}). We claim that the bilinear form \eqref{boundaryA} is well defined. Indeed, consider 
\begin{align}\label{boundaryA1}
	|a(\u_{\nu},\v_{\nu})|&\leq \nu\|\nabla\u_{\nu}\|_{\H}\|\nabla\v_{\nu}\|_{\H}+C\nu\|\gamma_0(\u_{\nu})\|_{\mathrm{H}^{-\frac{1}{2}}(\partial\mathcal{O})}\|\gamma_0(\v_{\nu})\|_{\mathrm{H}^{\frac{1}{2}}(\partial\mathcal{O})}+\frac{\sigma^2}{2}\|\u_{\nu}\|_{\H}\|\v_{\nu}\|_{\H}\nonumber\\&\leq \max\left\{\nu,\frac{\sigma^2}{2}\right\}\|\u_{\nu}\|_{\V}\|\v_{\nu}\|_{\V}+C\nu\|\u_{\nu}\|_{\H}\|\v_{\nu}\|_{\V},
\end{align}
which is finite for all $\u_{\nu},\v_{\nu}\in\V$, where we have used the Trace theorems \cite[Theorem 1.2, p.7]{Temam} and \cite[Theorem 3, p.206]{DMDZ}. Moreover, by the definition of $a(\cdot,\cdot)$, we have 
\begin{align}\label{3.4}
	a(\u_{\nu},\u_{\nu})&=\nu\int_{\mathcal{O}}|\nabla\u_{\nu}(x)|^2\d x+\frac{\sigma^2}{2}\int_{\mathcal{O}}|\u_{\nu}(x)|^2\d x-\nu\int_{\partial\mathcal{O}}\kappa(\rho)|\gamma_0(\u_{\nu}(\rho))|^2\d\rho.
\end{align}
By using the Trace theorem, and interpolation,  H\"older's and Young's inequalities, we obtain 
\begin{align}\label{3.5}
	\left|\int_{\partial\mathcal{O}}\kappa(\rho)|\gamma_0(\u_{\nu}(\rho))|^2\d\rho\right|&\leq C\|\gamma_0(\u_{\nu})\|_{\mathrm{L}^2(\partial\mathcal{O})}^2\leq C\|\u_{\nu}\|_{\H^{\frac{1}{2}}(\mathcal{O})}^2\leq C\|\u_{\nu}\|_{\V}\|\u_{\nu}\|_{\H}\nonumber\\&\leq \frac{1}{2}\|\u_{\nu}\|_{\V}^2+C_{\frac{1}{2}}\|\u_{\nu}\|_{\H}^2. 
\end{align}
Therefore, from \eqref{3.4}, we immediately have 
\begin{align}
	a(\u_{\nu},\u_{\nu})\geq\frac{\min\left\{\nu,\sigma^2\right\}}{2}\|\u_{\nu}\|_{\V}^2-C\nu\|\u_{\nu}\|_{\H}^2,
\end{align}
which implies that $a(\cdot,\cdot)$ is \textsf{$\V$-coercive} (cf. \cite[page 352]{GGr}).  We set  $$\D(\A_{\nu,\sigma})=\big\{\v_{\nu}\in\V\cap\H^2(\mathcal{O}): (\nabla\land\v_{\nu})|_{\partial\mathcal{O}}=0\big\},$$ and define the linear operator $\A_{\nu,\sigma}:\D(\A_{\nu,\sigma})\to \H$, as 
\begin{align}\label{3.7}\langle\A_{\nu,\sigma}\u_{\nu},\v_{\nu}\rangle=a(\u_{\nu},\v_{\nu}),\ \text{ for all }\ \u_{\nu},\v_{\nu}\in\V.\end{align}
There is also an extension $\A_{\nu,\sigma} : \V\to \V'$ of the operator $\A_{\nu,\sigma}\u_{\nu}=-\nu\Delta\u_{\nu}+\frac{\sigma^2}{2}\u_{\nu}$  by an application of the Lax-Milgram lemma.

\subsection{Trilinear form and bilinear operator}\label{BO}
Let us define the \emph{trilinear form} $b(\cdot,\cdot,\cdot):\V\times\V\times\V\to\R$ by $$b(\u,\v,\w)=\int_{\mathcal{O}}(\u(x)\cdot\nabla)\v(x)\cdot\w(x)\d x=\sum_{i,j=1}^2\int_{\mathcal{O}}\u_i(x)
\frac{\partial \v_j(x)}{\partial x_i}\w_j(x)\d x.$$ If $\u, \v$ are such that the linear map $b(\u, \v, \cdot) $ is continuous on $\V$, the corresponding element of $\V'$ is denoted by $\B(\u, \v)$. We also denote $\B(\v) = \B(\v, \v)=(\v\cdot\nabla)\v$.	An integration by parts implies (cf. \cite{Temam1})
\begin{equation}\label{b0}
	\left\{
	\begin{aligned}
		\langle\B(\u,\v),\v\rangle &= 0,\ \text{ for all }\ \u,\v \in\V,\\
		\langle\B(\u,\v),\w\rangle &=  -\langle\B(\u,\w),\v\rangle,\ \text{ for all }\ \u,\v,\w\in \V.
	\end{aligned}
	\right.\end{equation}

\subsection{Abstract formulation and existence}\label{Existence}
We can rewrite the system \eqref{1} in the following abstract form in $\V'$:
\begin{equation}\label{SNSE}
	\left\{
	\begin{aligned}
		\frac{\d\v_{\nu}}{\d t}+\A_{\nu,\sigma}\v_{\nu}+e^{\sigma y(\theta_{t}\omega)}\B(\v_{\nu})&=e^{-\sigma y(\theta_{t}\omega)}\f+\sigma y(\theta_{t}\omega)\v_{\nu},  \ \ \ \ \ t>\tau, \\
		\v_{\nu}|_{t=\tau}&=\v_{\tau},
		\end{aligned}
	\right.
\end{equation}
 where $\sigma>0$, $0<\nu\leq\nu_0$.

Let us now show that there exists a global in time solution to the system \eqref{1} in the following sense:
\begin{definition}\label{GWS}
	A stochastic process $\u(t):=\u(t;\tau,\omega,\u_{\tau})$ is a $\emph{global weak (analytic) solution}$ of the system \eqref{1} for $t\geq\tau$ and $\omega\in\Omega$ if for $\mathbb{P}$-a.e. $\omega\in\Omega$
	\begin{align*}
		\u(\cdot)\in\mathrm{C}([\tau,\infty);\H)\cap\mathrm{L}^{\infty}_{\mathrm{loc}}(\tau,\infty;\V)
	\end{align*}
and 
\begin{align*}
	(\u(t),\boldsymbol{\psi})+\int_{\tau}^{t}b(\u(\xi),\u(\xi),\boldsymbol{\psi})\d\xi=(\u_{\tau},\boldsymbol{\psi})+\int_{\tau}^{t}(\boldsymbol{f}(\xi),\boldsymbol{\psi})\d\xi +\sigma\int_{\tau}^{t}(\u(\xi),\boldsymbol{\psi})\d\W(\xi),
\end{align*}
for every $t\geq\tau$ and for every $\boldsymbol{\psi}\in\V$.
\end{definition}
 In order to prove the existence of a solution to the system \eqref{1}, it is enough to show that there exists a solution to the system \eqref{EuEq}.
\begin{theorem}\label{Existence-Eu}
	Suppose that $\u_{\tau}\in\V$, $\f\in\mathrm{L}^2_{\mathrm{loc}}(\R;\L^2(\mathcal{O}))$ and $\nabla\land\f\in\mathrm{L}^2_{\mathrm{loc}}(\R;\mathrm{L}^2(\mathcal{O}))$. Then, there exists at least one global weak (analytic) solution $\u$ of the system \eqref{1}. In addition, $\u(\cdot)$ satisfies
	\begin{align}\label{AE}
		\|\u(t)\|^2_{\V}&\leq C e^{2\sigma y(\theta_{t}\omega)}\bigg[\|\u_{\tau}\|_{\V}^2e^{-2\sigma y(\theta_{\tau}\omega)}e^{\int_{\tau}^{t}\left[-\frac{\sigma^2}{2}+2\sigma y(\theta_{r}\omega)\right]\d r}\nonumber\\&\quad+\int_{\tau}^{t}e^{\int_{\xi}^{t}\left[-\frac{\sigma^2}{2}+2\sigma y(\theta_{r}\omega)\right]\d r} e^{-2\sigma y(\theta_{\xi}\omega)}\big\{\|\f(\xi)\|^2_{\L^2(\mathcal{O})}+\|\nabla\land\boldsymbol{f}(\xi)\|_{\mathrm{L}^2(\mathcal{O})}^2\big\}\d\xi\bigg],
	\end{align}
for all $t\geq\tau$.
\end{theorem}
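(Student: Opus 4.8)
The plan is to use the reduction noted just before the statement: it suffices to construct, for each $\omega$ in the full-measure $\theta$-invariant set on which $t\mapsto y(\theta_t\omega)$ is continuous (hence locally bounded), a field $\v\in\mathrm{C}([\tau,\infty);\H)\cap\mathrm{L}^{\infty}_{\mathrm{loc}}(\tau,\infty;\V)$ solving the pathwise system \eqref{EuEq} (equivalently \eqref{Div-free}) weakly, and then to set $\u(t):=e^{\sigma y(\theta_t\omega)}\v(t)$. I would build $\v$ by a \emph{vanishing-viscosity} argument based on the Navier--Stokes type regularization \eqref{NSE}, abstractly \eqref{SNSE}, with viscosity $\nu\in(0,\nu_0]$, whose Navier-type boundary condition $\nabla\land\v_{\nu}|_{\partial\mathcal{O}}=0$ is imposed precisely so that the vorticity estimate below closes. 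Since $\v_{\tau}=e^{-\sigma y(\theta_{\tau}\omega)}\u_{\tau}\in\V$, $\nabla\land\v_{\tau}\in\mathrm{L}^2(\mathcal{O})$, and $e^{\sigma y(\theta_t\omega)}$ is constant in the space variable so that $\|\u(t)\|_{\V}^2=e^{2\sigma y(\theta_t\omega)}\|\v(t)\|_{\V}^2$, the bound \eqref{AE} will be the transcription of a Gronwall bound for $\|\v(t)\|_{\V}^2$.

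For each fixed $\nu$, a Faedo--Galerkin construction, the $\V$-coercivity of $\A_{\nu,\sigma}$ derived in \eqref{3.4}--\eqref{3.5}, the cancellation $\langle\B(\v_{\nu}),\v_{\nu}\rangle=0$ from \eqref{b0}, and the local boundedness of the coefficients $e^{\pm\sigma y(\theta_t\omega)}$ and $\sigma y(\theta_t\omega)$ give a global weak solution $\v_{\nu}\in\mathrm{C}([\tau,\infty);\H)\cap\mathrm{L}^2_{\mathrm{loc}}(\tau,\infty;\V)$ of \eqref{SNSE}, and pairing \eqref{SNSE} with $\v_{\nu}$ plus Gronwall's lemma on each $[\tau,T]$ bounds $\sup_{t\in[\tau,T]}\|\v_{\nu}(t)\|_{\H}$ uniformly in $\nu$. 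The crucial uniform bound is on the vorticity $\varrho_{\nu}=\nabla\land\v_{\nu}$: taking the curl of \eqref{NSE} gives \eqref{vorticity} with an additional viscous term $-\nu\Delta\varrho_{\nu}$; pairing it with $\varrho_{\nu}$, the boundary condition $\varrho_{\nu}|_{\partial\mathcal{O}}=0$ renders the viscous contribution nonnegative, the transport term vanishes because $\nabla\cdot\v_{\nu}=0$ and $\v_{\nu}\cdot\boldsymbol{n}|_{\partial\mathcal{O}}=0$, and the right-hand side is controlled via $\nabla\land\f\in\mathrm{L}^2_{\mathrm{loc}}$, so Gronwall bounds $\sup_{t\in[\tau,T]}\|\varrho_{\nu}(t)\|_{\mathrm{L}^2(\mathcal{O})}$ uniformly in $\nu$. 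The div--curl elliptic estimate $\|\v_{\nu}\|_{\V}\leq C(\|\v_{\nu}\|_{\H}+\|\varrho_{\nu}\|_{\mathrm{L}^2(\mathcal{O})})$ then makes $\{\v_{\nu}\}$ bounded in $\mathrm{L}^{\infty}(\tau,T;\V)$ uniformly in $\nu$, and, using the two-dimensional estimate $\|\B(\v_{\nu})\|_{\V'}\leq C\|\v_{\nu}\|_{\H}\|\v_{\nu}\|_{\V}$ together with $\|\A_{\nu,\sigma}\v_{\nu}\|_{\V'}\leq C\|\v_{\nu}\|_{\V}$ (uniformly for $\nu\leq\nu_0$), equation \eqref{SNSE} gives $\{\partial_t\v_{\nu}\}$ bounded in $\mathrm{L}^2(\tau,T;\V')$ uniformly in $\nu$.

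These uniform bounds, the compactness of $\V\hookrightarrow\H$, and the Aubin--Lions--Simon lemma produce a subsequence (not relabeled) and a limit $\v$ with $\v_{\nu}\to\v$ strongly in $\mathrm{L}^2(\tau,T;\H)$, weakly-$\ast$ in $\mathrm{L}^{\infty}(\tau,T;\V)$, and $\partial_t\v_{\nu}\rightharpoonup\partial_t\v$ in $\mathrm{L}^2(\tau,T;\V')$, for every $T>\tau$. Passing to the limit in the weak form of \eqref{SNSE}: the viscous part of $\langle\A_{\nu,\sigma}\v_{\nu},\boldsymbol{\psi}\rangle$ vanishes as $\nu\to0$ while $\frac{\sigma^2}{2}(\v_{\nu},\boldsymbol{\psi})\to\frac{\sigma^2}{2}(\v,\boldsymbol{\psi})$; the nonlinear term converges by the strong $\mathrm{L}^2(\tau,T;\H)$ convergence (writing $b(\v_{\nu},\v_{\nu},\boldsymbol{\psi})=-b(\v_{\nu},\boldsymbol{\psi},\v_{\nu})$ and invoking Ladyzhenskaya's inequality); and the linear and forcing terms converge by weak convergence and continuity of $t\mapsto y(\theta_t\omega)$. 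Hence $\v$ solves \eqref{Div-free}, and since additionally $\partial_t\v\in\mathrm{L}^2_{\mathrm{loc}}(\tau,\infty;\V')$ we obtain $\v\in\mathrm{C}([\tau,\infty);\H)$, which gives the solution $\u$ of \eqref{1}. For \eqref{AE}, adding the energy and vorticity inequalities for $\v$ (obtained by passing to the limit, using weak lower semicontinuity of the $\mathrm{L}^2(\mathcal{O})$-norms) yields a differential inequality $\frac{\d}{\d t}\Phi+[\frac{\sigma^2}{2}-2\sigma y(\theta_t\omega)]\Phi\leq Ce^{-2\sigma y(\theta_t\omega)}(\|\f\|^2_{\L^2(\mathcal{O})}+\|\nabla\land\f\|^2_{\mathrm{L}^2(\mathcal{O})})$ for $\Phi=\|\v\|_{\H}^2+\|\varrho\|_{\mathrm{L}^2(\mathcal{O})}^2$ (comparable to $\|\v\|_{\V}^2$); Gronwall with the integrating factor $e^{\int_{\tau}^{t}[-\frac{\sigma^2}{2}+2\sigma y(\theta_r\omega)]\d r}$, followed by $\|\u(t)\|_{\V}^2=e^{2\sigma y(\theta_t\omega)}\|\v(t)\|_{\V}^2$ and $\v_{\tau}=e^{-\sigma y(\theta_{\tau}\omega)}\u_{\tau}$, gives \eqref{AE}. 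I expect the main obstacle to be the $\nu$-uniform vorticity estimate: it is exactly here that the two-dimensional structure (vanishing of the vortex-stretching term, cf.\ \eqref{vorticity}) and the choice of the free/Navier boundary condition $\nabla\land\v_{\nu}|_{\partial\mathcal{O}}=0$ are indispensable, and one must simultaneously ensure that the Ornstein--Uhlenbeck coefficients, being only continuous and unbounded in time, are handled by working on compact time intervals rather than globally.
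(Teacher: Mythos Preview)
Your proposal is correct and takes essentially the same vanishing-viscosity route as the paper (Galerkin for each fixed $\nu$, $\mathrm{L}^2$-vorticity estimate uniform in $\nu$ exploiting the Navier boundary condition $\varrho_{\nu}|_{\partial\mathcal{O}}=0$, div--curl elliptic bound for $\|\v_{\nu}\|_{\V}$, then Aubin--Lions to send $\nu\to0$). One small point worth noting: the paper derives the Gronwall-type $\V$-bound \eqref{AE10} at the viscous level $\v_{\nu}$ (where pairing the vorticity equation with $\varrho_{\nu}$ is justified by parabolic regularity) and then passes this \emph{integrated} bound to the limit via weak lower semicontinuity of the norms, rather than deriving a differential inequality for $\Phi$ directly at the Euler limit $\v$ as you suggest---your route would require justifying the pairing of \eqref{vorticity} with $\varrho$ when $\varrho$ is only in $\mathrm{L}^{\infty}_{t}\mathrm{L}^{2}_{x}$, which is a delicate point the paper's ordering avoids.
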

\begin{proof}
Let $\{e_1,e_2,\ldots,e_n,\ldots\}\subset \D(\A)$ be a complete orthonormal system in  $\H$ and let $\H_n$ be the $n$-dimensional subspace of $\H$ spanned by $\{e_1,e_2,\ldots,e_n\}$. Let us denote the projection of the space $\V'$ into $\H_n$ by $\P_n$, that is, $\P_n\x =\sum\limits_{j=1}^{n}\langle \x,e_j\rangle e_j $. As we know that every element $\x\in\H$ induces a functional $\x'\in\H$ by the formula $\langle \x',\y\rangle =(\x,\y),\; \y\in\V$, then $\P_n\big|_{\H}$, the orthogonal projection of $\H$ onto $\H_n$ is given by $\P_n\x=\sum\limits_{j=1}^{n}(\x,e_j)e_j$. Clearly,  $\P_n$ is the orthogonal projection from $\H$ onto $\H_n$. Let us consider the following finite-dimensional system:
\begin{equation}\label{P-SNSE}
	\left\{
	\begin{aligned}
		\frac{\d\v_{n,\nu}}{\d t}+\A_{\nu,\sigma}\v_{n,\nu}+e^{\sigma y(\theta_{t}\omega)}\P_n\B(\v_{n,\nu})&=e^{-\sigma y(\theta_{t}\omega)}\P_n\f+\sigma y(\theta_{t}\omega)\v_{n,\nu},  \ \ \ \ \ t>\tau, \\
		\v_{n,\nu}|_{t=\tau}&=\P_n\v_{\tau},
	\end{aligned}
	\right.
\end{equation}
where
\begin{align*}
	\P_n\v_{\tau}=\sum_{i=1}^{n}(\v_{\tau},e_i)e_i.
\end{align*}
The existence of a local solution  to the finite-dimensional system \eqref{P-SNSE} follows by an application of Carath\'eodory's existence theorem. Taking the inner product by $\v_{n,\nu}(\cdot)$ in the first equation of \eqref{P-SNSE}, and making use of \eqref{boundaryA} and \eqref{b0}, we find 
\begin{align}\label{AE1}
	&\frac{1}{2}\frac{\d}{\d t}\|\v_{n,\nu}(t)\|^2_{\H}+\nu\|\nabla\v_{n,\nu}(t)\|^2_{\H}+\left[\frac{\sigma^2}{2}-\sigma y(\theta_{t}\omega)\right]\|\v_{n,\nu}(t)\|^2_{\H}\nonumber\\&\quad=\nu\int_{\partial\mathcal{O}}\kappa(\rho)|\gamma_0(\v_{n,\nu}(t,\rho))|^2\d \rho + e^{-\sigma y(\theta_{t}\omega)}(\f(t),\v_{n,\nu}(t)), 
\end{align}
for a.e. $t\geq\tau$. Similar to \eqref{3.5}, we get
\begin{align}\label{bdry1}
	\int_{\partial\mathcal{O}}\kappa(\rho)|\gamma_0(\v_{n,\nu}(\rho))|^2\d \rho\leq C\|\v_{n,\nu}\|_{\H}\|\v_{n,\nu}\|_{\V}\leq\frac{1}{2}\|\nabla\v_{n,\nu}\|^2_{\H}+C_{\frac{1}{2}}\|\v_{n,\nu}\|^2_{\H},
\end{align}
where $C_{\frac{1}{2}}>0$ is a constant. An application of the  Cauchy-Schwarz and Young's inequalities yields 
\begin{align}\label{AE2}
	e^{-\sigma y(\theta_{t}\omega)}|(\f,\v_{n,\nu})|\leq \frac{\sigma^2}{4}\|\v_{n,\nu}\|^2_{\H}+\frac{e^{-2\sigma y(\theta_{t}\omega)}}{\sigma^2}\|\f\|^2_{\L^2(\mathcal{O})}.
\end{align}
Combining \eqref{AE1}-\eqref{AE2}, we obtain (for $0<\nu\leq\nu_0$)
\begin{align}\label{AE3}
		&\frac{\d}{\d t}\|\v_{n,\nu}(t)\|^2_{\H}+\nu\|\nabla\v_{n,\nu}(t)\|^2_{\H}+\left[\frac{\sigma^2}{2}-2 \nu C_{\frac{1}{2}}-2\sigma y(\theta_{t}\omega)\right]\|\v_{n,\nu}(t)\|^2_{\H}\nonumber\\&\leq\frac{2 e^{-2\sigma y(\theta_{t}\omega)}}{\sigma^2}\|\f(t)\|^2_{\L^2(\mathcal{O})}, 
\end{align}
for a.e. $t\geq\tau$. Making use of Gronwall's inequality, we find 
\begin{align}\label{AE4}
	&\|\v_{n,\nu}(t)\|^2_{\H}\nonumber\\&\leq  \left[\|\v_{n,\nu}(\tau)\|_{\H}^2e^{\int_{\tau}^{t}\left[-C^*(\nu,\sigma)+2\sigma y(\theta_{r}\omega)\right]\d r}+\frac{2}{\sigma^2}\int_{\tau}^{t}e^{\int_{\xi}^{t}\left[-C^*(\nu,\sigma)+2\sigma y(\theta_{r}\omega)\right]\d r} z^2(\xi,\omega)\|\f(\xi)\|^2_{\L^2(\mathcal{O})}\d\xi\right],
\end{align}
for all $t\geq\tau$, $0<\nu\leq\nu_0$ and $C^*(\nu,\sigma)=\frac{\sigma^2}{2}-2\nu C_{\frac{1}{2}}>0$ (we choose $\nu_0$ such that $\frac{\sigma^2}{2}-2\nu C_{\frac{1}{2}}>0,$ for all $0<\nu\leq\nu_0$). Since $y(\cdot)$ is continuous, $\|\v_{n,\nu}(\tau)\|_{\H}\leq\|\v_{\tau}\|_{\H}$ and $\f\in\mathrm{L}^2_{\mathrm{loc}}(\R;\L^2(\mathcal{O}))$, we infer from \eqref{AE4} that 
\begin{align}\label{AE5}
	\sup_{t\in[\tau,\tau+T]}\|\v_{n,\nu}(t)\|^2_{\H}\leq C<\infty,
\end{align}
where the constant $C$ is independent of $n$. Integrating \eqref{AE3} over $(\tau,\tau+T)$ and using \eqref{AE5}, we attain $	\{\v_{n,\nu}\}_{n\in\N} \subset \mathrm{L}^{2}(\tau,\tau+T;\V).$ 

Now, for $\boldsymbol{\psi}(\cdot)\in\mathrm{L}^{2}(\tau,\tau+T;\V)$, we consider
\begin{align}\label{Derivative}
	\left|\left\langle\frac{\d \v_{n,\nu}}{\d t},\boldsymbol{\psi}\right\rangle\right|&=\left|\left\langle\A_{\nu,\sigma}\v_{n,\nu}+e^{\sigma y(\theta_{t}\omega)}\B(\v_{n,\nu})-e^{-\sigma y(\theta_{t}\omega)}\f+y(\theta_{t}\omega)\v_{n,\nu},\boldsymbol{\psi}\right\rangle\right|\nonumber\\&\leq\nu|(\nabla\v_{n,\nu},\nabla\boldsymbol{\psi})|+\left|\int_{\partial\mathcal{O}}\kappa(y)\gamma_0(\v_{n,\nu}(t,y))\cdot\gamma_0(\boldsymbol{\psi}(t,y))\d y\right|+\left[\frac{\sigma^2}{2}+\sigma|y(\theta_{t}\omega)|\right]\nonumber\\&\quad\times|(\v_{n,\nu},\boldsymbol{\psi})|+e^{\sigma y(\theta_{t}\omega)}|b(\v_{n,\nu},\v_{n,\nu},\boldsymbol{\psi})|+e^{-\sigma y(\theta_{t}\omega)}|(\boldsymbol{f},\boldsymbol{\psi})|\nonumber\\&\leq C\left[1+|y(\theta_{t}\omega)|\right]\big(\|\v_{\nu,\tau}\|_{\V}+|y(\theta_{t}\omega)|\|\v_{n,\nu}\|_{\H}+e^{\sigma y(\theta_{t}\omega)}\|\v_{n,\nu}\|_{\H}\|\v_{n,\nu}\|_{\V}\nonumber\\&\quad+e^{-\sigma y(\theta_{t}\omega)}\|\boldsymbol{f}\|_{\L^2(\mathcal{O})}\big)\|\boldsymbol{\psi}\|_{\V},
\end{align}
which immediately gives $\{\frac{\d \v_{n,\nu}}{\d t}\}_{n\in\N} \in \mathrm{L}^{2}(\tau,\tau+T;\V')$, where we have used the fact  $\v_{n,\nu}\in\mathrm{L}^{\infty}(\tau,\tau+T;\H)\cap\mathrm{L}^{2}(\tau,\tau+T;\V)$. Hence by the Aubin-Lions compactness lemma (since $\v_{n,\nu}\in\mathrm{L}^{2}(\tau,\tau+T;\V)$ and $\frac{\d \v_{n,\nu}}{\d t}\in \mathrm{L}^{2}(\tau,\tau+T;\V')$) and Banach-Alaoglu theorem (since $\v_{n,\nu}\in\mathrm{L}^{2}(\tau,\tau+T;\V)$), there exists a subsequence (still denoted by the same) $\v_{\nu}\in \mathrm{L}^{2}(\tau,\tau+T;\V) $ such that (as $n\to+\infty$)
	\begin{align}
	\v_{n,\nu}\to&\ \v_{\nu}\ \text{ in }	\ \mathrm{L}^{2}(\tau,\tau+T;\H) \ \ \text{ strongly},\label{AE6}\\
	\v_{n,\nu}\xrightharpoonup{w}&\ \v_{\nu}\ \text{ in } \ \mathrm{L}^{2}(\tau,\tau+T;\V) \ \ \text{ weakly}.\label{AE7}
\end{align}
With the help of the convergences \eqref{AE6}-\eqref{AE7} (see \cite{Temam}), one can pass the limit $n\to+\infty$ in \eqref{P-SNSE} and obtain that $\v_{\nu}$ satisfies \eqref{SNSE} in the distributional sense. Moreover, we also deduce 
\begin{align*}
	\v_{\nu}\in\mathrm{C}([\tau,\tau+T];\H)\cap\mathrm{L}^{2}(\tau,\tau+T;\V), \ \text{ for all }\ T>0.
\end{align*} 

Our next aim is to provide uniform estimates in $\nu$ for the $\V$-norm of $	\v_{\nu}$. Let us denote by $\varrho_{\nu}=\nabla\land\v_{\nu}$. Then, \eqref{NSE} gives
\begin{equation}\label{C-SNSE}
	\left\{
	\begin{aligned}
		\frac{\d\varrho_{\nu}}{\d t}-\nu\Delta\varrho_{\nu}+\frac{\sigma^2}{2}\varrho_{\nu}+e^{\sigma y(\theta_{t}\omega)}(\v_{\nu}\cdot\nabla\varrho_{\nu})&=e^{-\sigma y(\theta_{t}\omega)}\nabla\land\boldsymbol{f}+\sigma y(\theta_{t}\omega)\varrho_{\nu},  \ \ \ t>\tau, \\
		\varrho_{\nu}|_{t=\tau}&=\nabla\land\v_{\tau}.
	\end{aligned}
	\right.
\end{equation}
 Taking the inner product with $\varrho_{\nu}$ in the first equation of \eqref{C-SNSE}, we obtain
\begin{align*}
	&\frac{\d}{\d t}\|\varrho_{\nu}(t)\|_{\mathrm{L}^2(\mathcal{O})}^2 +\left[\frac{\sigma^2}{2}-2\sigma y(\theta_{t}\omega)\right]\|\varrho_{\nu}(t)\|_{\mathrm{L}^2(\mathcal{O})}^2+2\nu\|\nabla\varrho_{\nu}(t)\|_{\mathbb{L}^2}^2\nonumber\\&\leq\frac{2e^{-2\sigma y(\theta_{t}\omega)}}{\sigma^2}\|\nabla\land\boldsymbol{f}(t)\|_{\mathrm{L}^2(\mathcal{O})}^2, \  \text{ for a.e. } t\geq\tau, 
\end{align*}
which implies 
\begin{align}\label{AE8}
	&\|\varrho_{\nu}(t)\|_{\mathrm{L}^2(\mathcal{O})}^2\nonumber\\&\leq\left[\|\varrho_{\nu}(\tau)\|^2_{\mathrm{L}^2(\mathcal{O})}e^{\int_{\tau}^{t}\left[-\frac{\sigma^2}{2}+2\sigma y(\theta_{r}\omega)\right]\d r} +\frac{2}{\sigma^2}\int_{\tau}^{t}e^{\int_{\xi}^{t}\left[-\frac{\sigma^2}{2}+2\sigma y(\theta_{r}\omega)\right]\d r}e^{-2\sigma y(\theta_{\xi}\omega)}\|\nabla\land\boldsymbol{f}(\xi)\|_{\mathrm{L}^2(\mathcal{O})}^2\d\xi\right], 
\end{align}
 for all $t\geq\tau$. Since $\Delta\v_{\nu}=\nabla(\nabla\cdot\v_{\nu})-\nabla\wedge(\nabla\wedge\v_{\nu})$, divergence free condition $\nabla\cdot\v_{\nu}=0$ assures  that $\v_{\nu}$ satisfies the following elliptic problem:
\begin{equation}\label{Elleptic-E}
	\left\{
	\begin{aligned}
		\Delta\v_{\nu}&=-\nabla^{\perp}\varrho_{\nu}, \ \ \text{ on }\ \mathcal{O}, \\
		\nabla\land\v_{\nu}|_{\partial\mathcal{O}}&=0,\\
		\v_{\nu}\cdot\boldsymbol{n}|_{\partial\mathcal{O}}&=0,
	\end{aligned}
	\right.
\end{equation}
where $\nabla^{\perp}=(\frac{\partial}{\partial x_2}, \ -\frac{\partial}{\partial x_1})$. Multiplying the first equation of \eqref{Elleptic-E} with $\v_{\nu}$ and integrating over $\mathcal{O}$, we find 
\begin{align*}
	(\Delta\v_{\nu},\v_{\nu})=-(\nabla^{\perp}\varrho_{\nu},\v_{\nu}).
\end{align*}
Making use of integration by parts and performing calculations similar to \eqref{boundaryA1}, we arrive at
\begin{align}\label{AE9}
	\|\nabla\v_{\nu}\|^2_{\H}&=2\int_{\partial\mathcal{O}}\kappa(\rho)|\gamma_0(\v_{\nu}(\rho))|^2\d \rho+2\|\varrho_{\nu}\|^2_{\mathrm{L}^{2}(\mathcal{O})}-\|\nabla\v_{\nu}\|^2_{\H}\nonumber\\&\leq C\|\v_{\nu}\|_{\H}\|\v_{\nu}\|_{\V}+2\|\varrho_{\nu}\|^2_{\mathrm{L}^{2}(\mathcal{O})}-\|\nabla\v_{\nu}\|^2_{\H}\nonumber\\&\leq C(\|\v_{\nu}\|^2_{\H}+\|\varrho_{\nu}\|^2_{\mathrm{L}^{2}(\mathcal{O})}).
\end{align}
Combining \eqref{AE4} and \eqref{AE8}-\eqref{AE9}, we arrive at
\begin{align}\label{AE10}
	\|\v_{\nu}(t)\|^2_{\V}&\leq C\bigg[\|\v_{\tau}\|_{\V}^2e^{\int_{\tau}^{t}\left[-C^*(\nu,\sigma)+2\sigma y(\theta_{r}\omega)\right]\d r}\nonumber\\&\quad+\frac{2}{\sigma^2}\int_{\tau}^{t}e^{\int_{\xi}^{t}\left[-C^*(\nu,\sigma)+2\sigma y(\theta_{r}\omega)\right]\d r} e^{-2\sigma y(\theta_{\xi}\omega)}\bigg[\|\f(\xi)\|^2_{\L^2(\mathcal{O})}+\|\nabla\land\boldsymbol{f}(\xi)\|_{\mathrm{L}^2(\mathcal{O})}^2\bigg]\d\xi\bigg],
\end{align}
for all $t\geq\tau$. Due to $C^*(\nu,\sigma)>0$, the inequality \eqref{AE10} implies that $\{\v_{\nu}(\cdot)\}_{0<\nu\leq\nu_0}$ is uniformly bounded (that is, the  bound is independent of $\nu$) in $\mathrm{L}^{\infty}(\tau,\tau+T;\V)$. A similar calculation as in \eqref{Derivative} gives $\{\frac{\d \v_{\nu}}{\d t}\}_{0<\nu\leq \nu_0}$ is uniformly bounded in $\mathrm{L}^{2}(\tau,\tau+T;\V')$. Once again, by the Aubin-Lions compactness lemma, there exists a subsequence (still denoted by the same) and $\v\in\mathrm{L}^{2}(\tau,\tau+T;\V)$ with $\frac{\d \v}{\d t}\in\mathrm{L}^{2}(\tau,\tau+T;\V')$ such that (as $\nu\to0$)
\begin{align}
	\v_{\nu}\to&\ \v\ \text{ in }\ 	\mathrm{L}^{2}(\tau,\tau+T;\H) \ \ \text{ strongly},\label{AE11}\\
	\v_{\nu}\xrightharpoonup{w}&\ \v\ \text{ in } \ \mathrm{L}^{2}(\tau,\tau+T;\V) \ \ \text{ weakly}.\label{AE12}
\end{align}
Again, with the help of convergences \eqref{AE11}-\eqref{AE12}, one can pass the limit $\nu\to0$ in \eqref{SNSE} and obtain that $\v(\cdot)$ satisfies \eqref{EuEq} in the distributional sense and \eqref{AE10} holds for $\v(\cdot)$ also. Since $z(\cdot,\omega)$ is continuous, the transformation \eqref{COV} confirms that $\u(\cdot)$ is a global weak (analytic) solution of the system \eqref{1} in the sense of Definition \ref{GWS}. In addition, the transformation \eqref{COV} and the estimate  \eqref{AE10} imply that $\u(\cdot)$ satisfies \eqref{AE}, which completes the proof.
\end{proof}

\subsection{Uniqueness} 
In Subsection \ref{Existence}, we proved  that for the  initial data $\u_{\tau}\in\V$, and external forcing $\f\in\mathrm{L}^2_{\mathrm{loc}}(\R;\L^2(\mathcal{O}))$ and $\nabla\land\f\in\mathrm{L}^2_{\mathrm{loc}}(\R;\mathrm{L}^2(\mathcal{O}))$, there exists a solution for the system \eqref{1} in the sense of Definition \ref{GWS}. In this subsection, we show that if the initial data and external forcing have more regularity, that is, $\u_{\tau}\in\V$, $\nabla\land\u_{\tau}\in\mathrm{L}^{\infty}(\mathcal{O})$, $\f\in\mathrm{L}^2_{\mathrm{loc}}(\R;\L^2(\mathcal{O}))$ and $\nabla\land\f\in\mathrm{L}^{\infty}_{\mathrm{loc}}(\R;\mathrm{L}^{\infty}(\mathcal{O}))$, the solution of the system \eqref{1} obtained in Theorem \ref{Existence-Eu} is unique. In order to prove the uniqueness, the following two lemmas play a crucial role. 
\begin{lemma}[{\cite[Theorem 8.5, part (ii)]{EHL-ML}}]
	For every $\v\in\H^1(\mathcal{O})$, there exists a constant $C>0$ which is independent of $p$, such that 
	\begin{align}\label{SE-2D}
		\|\v\|_{\L^p(\mathcal{O})}\leq C p^{\frac{1}{2}}\|\v\|_{\H^1(\mathcal{O})}, \ \ \ \text{ for all }\ p\in[2,\infty).
	\end{align}
\end{lemma}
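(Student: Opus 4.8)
Inequality \eqref{SE-2D} is the sharp quantitative form of the two--dimensional Sobolev embedding $\H^1(\mathcal{O})\hookrightarrow\L^p(\mathcal{O})$; its whole content is that the embedding constant grows no faster than $\sqrt p$ as $p\to\infty$. I would deduce it from the Moser--Trudinger inequality together with Stirling's formula, which is the shortest self--contained route, and I indicate at the end an equivalent Littlewood--Paley argument.

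\emph{Step 1 (Moser--Trudinger).} Since $\mathcal{O}$ is a bounded domain with $\mathrm{C}^2$ boundary, there exist $\alpha=\alpha(\mathcal{O})>0$ and $K=K(\mathcal{O})>0$ such that
\begin{align}\label{MTineq}
	\int_{\mathcal{O}}\exp\left(\alpha\,\frac{|\v(x)|^2}{\|\v\|_{\H^1(\mathcal{O})}^2}\right)\d x\leq K,\qquad\text{for all }\ \v\in\H^1(\mathcal{O})\setminus\{0\}.
\end{align}
This is the classical Moser--Trudinger inequality in the form valid on bounded Lipschitz domains for the full $\H^1$--norm; it follows from the sharp Trudinger/Moser estimate on $\mathrm{H}^1_0$ via an extension argument, or directly from a bounded extension operator $\H^1(\mathcal{O})\to\H^1(\R^2)$ combined with Ruf's inequality on $\R^2$.

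\emph{Steps 2--3 (from \eqref{MTineq} to \eqref{SE-2D}).} Fix $\v\neq 0$; by homogeneity normalize $\|\v\|_{\H^1(\mathcal{O})}=1$. Expanding the exponential in \eqref{MTineq} and using monotone convergence,
\begin{align*}
	\sum_{k=0}^{\infty}\frac{\alpha^{k}}{k!}\,\|\v\|_{\L^{2k}(\mathcal{O})}^{2k}\leq K,
\end{align*}
so keeping only the $k$--th term gives $\|\v\|_{\L^{2k}(\mathcal{O})}\leq\left(K\,k!\,\alpha^{-k}\right)^{1/(2k)}$ for every $k\in\N$. Stirling's formula yields $(k!)^{1/(2k)}\leq C\sqrt{k}$ with $C$ absolute, so reinstating the norm by homogeneity,
\begin{align}\label{evenint}
	\|\v\|_{\L^{2k}(\mathcal{O})}\leq C\,k^{1/2}\,\|\v\|_{\H^1(\mathcal{O})},\qquad k\in\N,\quad C=C(\mathcal{O}).
\end{align}
For arbitrary $p\in[2,\infty)$, pick $k\in\N$ with $2k\leq p<2(k+1)$; since $\mathcal{O}$ has finite measure, H\"older's inequality gives $\|\v\|_{\L^{p}(\mathcal{O})}\leq|\mathcal{O}|^{\frac{1}{p}-\frac{1}{2(k+1)}}\|\v\|_{\L^{2(k+1)}(\mathcal{O})}\leq C_{\mathcal{O}}\,\|\v\|_{\L^{2(k+1)}(\mathcal{O})}$, and combining with \eqref{evenint} for the index $k+1$ together with $k+1\leq p$ produces $\|\v\|_{\L^{p}(\mathcal{O})}\leq C\,p^{1/2}\,\|\v\|_{\H^1(\mathcal{O})}$ with $C=C(\mathcal{O})$ independent of $p$, which is \eqref{SE-2D}.

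\emph{Main obstacle.} Everything subtle is concentrated in producing the exponent $\tfrac12$ and not something larger: this rests on (a) the fact that in two dimensions $\H^1$ embeds into the Orlicz space $\exp(\mathrm{L}^2)$, so that the \emph{quadratic} exponent in \eqref{MTineq} contributes a factor $\sqrt{k!}\sim(k/e)^{k/2}$ in the denominator, and (b) the Stirling asymptotics $(k!)^{1/(2k)}\asymp\sqrt{k/e}$. An equivalent route avoids Orlicz spaces altogether: establish the Gagliardo--Nirenberg inequality $\|\v\|_{\L^p(\R^2)}\leq C\sqrt p\,\|\v\|_{\L^2(\R^2)}^{2/p}\|\nabla\v\|_{\L^2(\R^2)}^{1-2/p}$ by a Littlewood--Paley decomposition (Bernstein's inequalities together with optimization of the dyadic frequency cut--off, the $\sqrt p$ emerging from summing the two geometric series of ratios $2^{\pm2/p}$ near the cut--off), transplant it to $\mathcal{O}$ through a bounded extension operator, and remove the interpolation exponents with Young's inequality $a^{\theta}b^{1-\theta}\leq a+b$; the burden of tracking the $p$--dependence is then identical.
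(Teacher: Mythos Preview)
Your argument is correct. The paper itself does not supply an independent proof; it simply cites Lieb--Loss, Theorem~8.5(ii), whose proof proceeds by symmetric decreasing rearrangement and a direct radial estimate rather than through Moser--Trudinger. The LaTeX source does, however, contain a commented-out draft proof (enclosed in an \texttt{\textbackslash iffalse}\,\ldots\,\texttt{\textbackslash fi} block) that takes a genuinely different route: it iterates the elementary Gagliardo--Nirenberg step $\|v_1\|_{\L^{2m}}^{m}\leq m\,\|v_1\|_{\L^{2(m-1)}}^{m-1}\|\nabla v_1\|_{\L^2}$ from $m=2$ up to $m=N$, arriving at $\|v_1\|_{\L^{2N}}\leq (N!)^{1/N}\|v_1\|_{\L^2}^{1/N}\|\nabla v_1\|_{\L^2}^{(N-1)/N}$. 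Since $(N!)^{1/N}\sim N/e$, this iteration yields only the \emph{linear} bound $\|\v\|_{\L^p}\leq Cp\,\|\v\|_{\H^1}$, not the sharp $C\sqrt{p}$ stated in the lemma---presumably one reason the authors suppressed it and reverted to the citation. Your Moser--Trudinger route therefore proves strictly more than that draft: the quadratic exponent in the exponential-integrability bound is exactly what forces $(k!)^{1/(2k)}\sim\sqrt{k/e}$ rather than $(k!)^{1/k}\sim k/e$ into the computation, and that is where the missing square root is recovered.
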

Now, since $\mathrm{div\ }\v_{\nu}=0$, we can find a stream function $\boldsymbol{\psi}_{\nu}$ such that
$\v_{\nu}=\nabla^{\perp}\boldsymbol{\psi}_{\nu},$
and $\boldsymbol{\psi}_{\nu}$ satisfies  
\begin{equation*}
	\left\{
	\begin{aligned}
		-\Delta\boldsymbol{\psi}_{\nu}&=\varrho_{\nu}, \ \ \text{ on }\ \mathcal{O}, \\
		\boldsymbol{\psi}_{\nu}|_{\partial\mathcal{O}}&=0.
	\end{aligned}
	\right.
\end{equation*}
Let $\mathcal{G}_{\mathcal{O}}$ be the Green function with homogeneous boundary conditions.
Then, $\boldsymbol{\psi}_{\nu}$ satisfies
\begin{align}\label{GC1}
	\boldsymbol{\psi}_{\nu}(x)=\int_{\mathcal{O}}\mathcal{G}_{\mathcal{O}}(x,\rho)\varrho_{\nu}(\rho)\d \rho \ \ \text{ which implies }\ \  \v_{\nu}(x)=\int_{\mathcal{O}}\nabla^{\perp}\mathcal{G}_{\mathcal{O}}(x,\rho)\varrho_{\nu}(\rho)\d \rho.
\end{align}
It is known that the Green function $\mathcal{G}_{\mathcal{O}}$ and its derivatives satisfies
\begin{equation}\label{GC2}
	\left\{
	\begin{aligned}
		|\mathcal{G}_{\mathcal{O}}(x,z)|&\leq C\big[\log(|x-z|)+1\big],\\
		|\nabla^{\perp}\mathcal{G}_{\mathcal{O}}(x,z)|&\leq C|x-z|^{-1},\\	
		\bigg|\sum_{i=1}^{2}\frac{\partial}{\partial x_i}\nabla^{\perp}\mathcal{G}_{\mathcal{O}}(x,z)\bigg|&\leq C|x-z|^{-2}.
	\end{aligned}
	\right.
\end{equation}
By \eqref{GC1}-\eqref{GC2} and the Calder\'on-Zygmund theorem on singular integral (cf. \cite{Stein}), we infer that
\begin{align}\label{GC3}
	\|\nabla\v_{\nu}\|_{\L^p(\mathcal{O})}\leq C_p\|\varrho_{\nu}\|_{\L^p(\mathcal{O})} \ \ \ \text{ for }\ p\in(1,\infty).
\end{align}
Furthermore, for $p>1$ the constant $C_p$ appearing in \eqref{GC3} can be rewritten as
\begin{align*}
	C_p\leq C\frac{p^2}{p-1},
\end{align*}
where $C$ is a constant independent of $p$. This fact follows from the constant
which is calculated carefully in Marcinkiewicz interpolation inequality (cf. \cite{GT}).  Based on the above discussion, we state the following lemma which can also be found in the works  \cite[Lemma 4.1]{Yudovich1995} and \cite[Lemma 1.2]{Yudovich2005}.
\begin{lemma}\label{Grad-Curl}
	For a function $\v$ satisfing the system \eqref{Elleptic-E}, we have 
	\begin{align}\label{Grad-Curl1}
		\|\nabla\v\|_{\L^p(\mathcal{O})}\leq \frac{C p^2}{p-1}\|\nabla\land\v\|_{\mathrm{L}^{p}(\mathcal{O})},\ \ \text{for all}\ \ \nabla\land\v\in\mathrm{L}^{p}(\mathcal{O}),
	\end{align}
for any $p>1$, where the constant $C>0$ depends on $\mathcal{O}$ but not on $p$.
\end{lemma}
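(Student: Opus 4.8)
The plan is to realize $\nabla\v$ as a Calder\'on--Zygmund singular integral of $\varrho:=\nabla\land\v$ through the stream function, and then to extract the sharp dependence of the operator norm on $p$ from the Marcinkiewicz interpolation argument. All the kernel-level ingredients have in fact already been assembled just above the statement, so the work is to organize them and to carry all constants along.

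First I would recall the stream-function reduction: since $\mathrm{div}\,\v=0$ and $\v\cdot\n|_{\partial\mathcal{O}}=0$, there is a stream function $\boldsymbol{\psi}$ with $\v=\nabla^{\perp}\boldsymbol{\psi}$ solving the Dirichlet problem $-\Delta\boldsymbol{\psi}=\varrho$, $\boldsymbol{\psi}|_{\partial\mathcal{O}}=0$. Expressing $\boldsymbol{\psi}$ through the Green function $\mathcal{G}_{\mathcal{O}}$ of the domain gives the representation \eqref{GC1}, $\v(x)=\int_{\mathcal{O}}\nabla^{\perp}\mathcal{G}_{\mathcal{O}}(x,\rho)\varrho(\rho)\d\rho$, and differentiating once more,
\[
\partial_{x_i}\v(x)=\mathrm{p.v.}\!\int_{\mathcal{O}}\partial_{x_i}\nabla^{\perp}\mathcal{G}_{\mathcal{O}}(x,\rho)\,\varrho(\rho)\d\rho+(\text{a remainder bounded in }\L^p\text{ by }C\|\varrho\|_{\mathrm{L}^p(\mathcal{O})}),\quad i=1,2.
\]
By the kernel bounds \eqref{GC2} (valid because $\partial\mathcal{O}$ is $\mathrm{C}^2$), the kernel $K(x,\rho)=\partial_{x_i}\nabla^{\perp}\mathcal{G}_{\mathcal{O}}(x,\rho)$ satisfies $|K(x,\rho)|\le C|x-\rho|^{-2}$ together with the associated H\"ormander-type cancellation condition, so $T:\varrho\mapsto\nabla\v$ is a singular integral operator of the expected order in dimension two, with constants depending only on $\mathcal{O}$.

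Next I would record the two endpoint bounds for $T$: the strong $(2,2)$ estimate $\|\nabla\v\|_{\L^2(\mathcal{O})}\le C\|\varrho\|_{\mathrm{L}^2(\mathcal{O})}$ follows from the energy identity for \eqref{Elleptic-E} exactly as in \eqref{AE9}, and the weak $(1,1)$ estimate follows from the Calder\'on--Zygmund decomposition using only \eqref{GC2}; in both cases the constant is $C(\mathcal{O})$. The crux is then to interpolate these via the Marcinkiewicz theorem while keeping track of all constants: the Marcinkiewicz constant degenerates like $p/(p-1)$ as $p\downarrow 1$, while the Calder\'on--Zygmund machinery contributes an extra factor that is $O(p)$ for $p$ large, so the product is dominated by $Cp^2/(p-1)$ with $C$ independent of $p$ (this is precisely the careful bookkeeping in \cite[Ch.~9]{GT}; see also \cite{Yudovich1995,Yudovich2005}). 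This gives $\|\nabla\v\|_{\L^p(\mathcal{O})}\le C_p\|\varrho\|_{\mathrm{L}^p(\mathcal{O})}$ with $C_p\le Cp^2/(p-1)$, i.e. \eqref{GC3} with the stated form of $C_p$; since $\varrho=\nabla\land\v$, this is \eqref{Grad-Curl1}.

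The only genuinely delicate point is the last one: qualitative $\L^p$-boundedness of $T$ for each fixed $p\in(1,\infty)$ is classical, but the quantitative statement with the explicit rate $C_p\lesssim p^2/(p-1)$ forces one to reprove the Marcinkiewicz/Calder\'on--Zygmund estimate with every constant carried along rather than invoked as a black box. Everything else---existence of the stream function, the representation \eqref{GC1}, the Green-function bounds \eqref{GC2}, and the $\L^2$ estimate---is standard elliptic theory on a bounded $\mathrm{C}^2$ domain and is already available from the discussion preceding the statement.
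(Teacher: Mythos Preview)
Your proposal is correct and follows essentially the same approach as the paper: the paper's argument (given in the discussion immediately preceding the lemma rather than as a separate proof) is precisely the stream-function/Green-function representation \eqref{GC1}--\eqref{GC2}, the Calder\'on--Zygmund theorem for the resulting singular integral, and the explicit constant $C_p\le Cp^2/(p-1)$ extracted from the Marcinkiewicz interpolation, with the same citations to \cite{GT}, \cite{Yudovich1995}, and \cite{Yudovich2005}. Your write-up is in fact slightly more detailed in naming the weak $(1,1)$ and strong $(2,2)$ endpoint inputs, but the strategy is identical.
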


\begin{theorem}\label{Uniqueness}
		Suppose that $\u_{\tau}\in\V$, $\nabla\land\u_{\tau}\in\mathrm{L}^{\infty}(\mathcal{O})$, $\f\in\mathrm{L}^2_{\mathrm{loc}}(\R;\L^2(\mathcal{O}))$ and $\nabla\land\f\in\mathrm{L}^{\infty}_{\mathrm{loc}}(\R;\mathrm{L}^{\infty}(\mathcal{O}))$. Then, the solution of the system \eqref{1} obtained in Theorem \ref{Existence-Eu} is unique. In addition, the solution $\u\in\mathrm{L}^{\infty}(\tau,\tau+T;{\mathbb{W}}^{1,p}(\mathcal{O}))\cap\mathrm{L}^{\infty}(\tau,\tau+T;\L^{\infty}(\mathcal{O}))$ for $p\in[2,\infty)$ and $\nabla\land\u\in\mathrm{L}^{\infty}(\tau,\tau+T;\L^{\infty}(\mathcal{O}))$. Moreover, $\nabla\land\u(\cdot)$ satisfies
		\begin{align}\label{UNS}
			\|\nabla\land\u(t)\|_{\L^{\infty}(\mathcal{O})}&\leq e^{\sigma y(\theta_{t}\omega)}\bigg[e^{\int_{\tau}^{t}\left[-\frac{\sigma^2}{2}+2\sigma y(\theta_{r}\omega)\right]\d r-\sigma y(\theta_{\tau}\omega)}\|\nabla\land\u_{\tau}\|_{\mathrm{L}^{\infty}(\mathcal{O})}\nonumber\\&\quad+\int_{\tau}^{t}e^{\int_{\xi}^{t}\left[-\frac{\sigma^2}{2}+2\sigma y(\theta_{r}\omega)\right]\d r-\sigma y(\theta_{\xi}\omega)}\|\nabla\land\f(\xi)\|_{\mathrm{L}^{\infty}(\mathcal{O})}\d\xi\bigg],
		\end{align}
	for a.e. $t\geq\tau$.
\end{theorem}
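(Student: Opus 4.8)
The strategy is a pathwise version of Yudovich's argument, carried out on the equivalent deterministic system~\eqref{EuEq} and its vorticity formulation~\eqref{vorticity}; both the extra regularity and the uniqueness hinge on $\mathrm{L}^p$-bounds for the vorticity with the $p$-dependence of every constant tracked explicitly, which is exactly what Lemma~\ref{Grad-Curl} and the Sobolev estimate~\eqref{SE-2D} deliver. For the regularity and the bound~\eqref{UNS}, I would first return to the viscous approximations $\v_\nu$ built in the proof of Theorem~\ref{Existence-Eu}, whose vorticity $\varrho_\nu=\nabla\land\v_\nu$ solves~\eqref{C-SNSE} together with the homogeneous boundary condition $\varrho_\nu|_{\partial\mathcal{O}}=0$. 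Testing~\eqref{C-SNSE} with $|\varrho_\nu|^{p-2}\varrho_\nu$: the viscous term contributes $\nu(p-1)\int_{\mathcal{O}}|\varrho_\nu|^{p-2}|\nabla\varrho_\nu|^2\d x\geq 0$ and is dropped; the transport term vanishes since $\nabla\cdot\v_\nu=0$ and $\v_\nu\cdot\n|_{\partial\mathcal{O}}=0$; and the forcing is estimated by H\"older's inequality as $e^{-\sigma y(\theta_t\omega)}\|\nabla\land\f\|_{\mathrm{L}^p(\mathcal{O})}\|\varrho_\nu\|^{p-1}_{\mathrm{L}^p(\mathcal{O})}$. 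Dividing by $\|\varrho_\nu\|^{p-1}_{\mathrm{L}^p(\mathcal{O})}$ produces a linear differential inequality for $\|\varrho_\nu(t)\|_{\mathrm{L}^p(\mathcal{O})}$ whose coefficients are independent of $\nu$ and $p$, so Gronwall's inequality gives a $\nu$-uniform estimate. Passing $\nu\to 0$ along the subsequence of Theorem~\ref{Existence-Eu} and using weak-$\ast$ lower semicontinuity of the $\mathrm{L}^p$-norm transfers the estimate to $\varrho=\nabla\land\v$; letting $p\to+\infty$ (legitimate on the bounded domain, since $\nabla\land\u_\tau\in\mathrm{L}^{\infty}(\mathcal{O})$ and $\nabla\land\f\in\mathrm{L}^{\infty}_{\mathrm{loc}}(\R;\mathrm{L}^{\infty}(\mathcal{O}))$) and undoing~\eqref{COV}, through which $\nabla\land\u=e^{\sigma y(\theta_t\omega)}\varrho$, yields~\eqref{UNS}. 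The $\nu$-uniform bound $\|\nabla\v_\nu\|_{\L^p(\mathcal{O})}\leq\frac{Cp^2}{p-1}\|\varrho_\nu\|_{\mathrm{L}^p(\mathcal{O})}$ of Lemma~\ref{Grad-Curl} then passes to the limit, giving $\v\in\mathrm{L}^{\infty}(\tau,\tau+T;\mathbb{W}^{1,p}(\mathcal{O}))$ for every $p\in[2,\infty)$, and hence $\v\in\mathrm{L}^{\infty}(\tau,\tau+T;\L^{\infty}(\mathcal{O}))$ by the Sobolev embedding $\mathbb{W}^{1,p}(\mathcal{O})\hookrightarrow\L^{\infty}(\mathcal{O})$, $p>2$; transforming back gives the asserted regularity of $\u$.

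For uniqueness, let $\u^{(1)}$ be the solution of Theorem~\ref{Existence-Eu} and $\u^{(2)}$ any other global weak (analytic) solution with the same initial datum and forcing; put $\v^{(i)}=e^{-\sigma y(\theta_t\omega)}\u^{(i)}$, $\varrho^{(i)}=\nabla\land\v^{(i)}$, and $\w=\v^{(1)}-\v^{(2)}$, which by Definition~\ref{GWS} lies in $\mathrm{C}([\tau,\infty);\H)\cap\mathrm{L}^{\infty}_{\mathrm{loc}}(\tau,\infty;\V)$ with $\w(\tau)=0$ and, from the equation, $\frac{\d\w}{\d t}\in\mathrm{L}^2(\tau,\tau+T;\V')$. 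Subtracting the two copies of~\eqref{Div-free}, writing $(\v^{(1)}\cdot\nabla)\v^{(1)}-(\v^{(2)}\cdot\nabla)\v^{(2)}=(\w\cdot\nabla)\v^{(1)}+(\v^{(2)}\cdot\nabla)\w$, and pairing with $\w$ (so $b(\v^{(2)},\w,\w)=0$ by~\eqref{b0}) leaves
\begin{align*}
	\frac{1}{2}\frac{\d}{\d t}\|\w(t)\|_{\H}^2+\left[\frac{\sigma^2}{2}-\sigma y(\theta_t\omega)\right]\|\w(t)\|_{\H}^2=-e^{\sigma y(\theta_t\omega)}b(\w(t),\v^{(1)}(t),\w(t)).
\end{align*}
On the right-hand side I would use $|b(\w,\v^{(1)},\w)|\leq\|\nabla\v^{(1)}\|_{\L^p(\mathcal{O})}\|\w\|^2_{\L^{2p/(p-1)}(\mathcal{O})}$, bound $\|\nabla\v^{(1)}\|_{\L^p(\mathcal{O})}\leq\frac{Cp^2}{p-1}\|\varrho^{(1)}\|_{\mathrm{L}^p(\mathcal{O})}\leq C'p$ for $p\geq 2$ by Lemma~\ref{Grad-Curl} and the bound~\eqref{UNS} (together with $\|\varrho^{(1)}\|_{\mathrm{L}^p(\mathcal{O})}\leq|\mathcal{O}|^{1/p}\|\varrho^{(1)}\|_{\mathrm{L}^{\infty}(\mathcal{O})}$), and interpolate $\|\w\|_{\L^{2p/(p-1)}(\mathcal{O})}\leq\|\w\|_{\H}^{1-2/p}\|\w\|_{\L^{4}(\mathcal{O})}^{2/p}$ with $\|\w\|_{\L^{4}(\mathcal{O})}\leq C\|\w\|_{\V}$ from~\eqref{SE-2D}; note that only the gradient of the \emph{regular} solution $\v^{(1)}$ appears, so no vorticity control on $\v^{(2)}$ is needed. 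Absorbing the $\sigma y(\theta_t\omega)$-term into an integrating factor and writing $\phi(t)=\|\w(t)\|_{\H}^2$ turns this into $\frac{\d\phi}{\d t}\leq G\,p\,\phi^{1-2/p}$ on $[\tau,\tau+T]$ with $G$ independent of $p$; since $\phi(\tau)=0$, integrating the resulting inequality for $\phi^{2/p}$ gives $\phi(t)\leq\bigl(G(t-\tau)\bigr)^{p/2}$ up to a $p$-independent factor, so $\phi\equiv 0$ on any subinterval of length $<1/G$, and a finite iteration (restarting at interior points where $\phi$ vanishes) propagates this to $[\tau,\tau+T]$. As $T$ is arbitrary, $\w\equiv 0$, i.e.\ $\u^{(1)}=\u^{(2)}$.

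The hard part is the uniqueness step: the differential inequality for $\phi$ is supercritical for each fixed $p$ and becomes usable only because the coefficient of $\phi^{1-2/p}$ grows no faster than linearly in $p$ — this is precisely the role of the $\tfrac{Cp^2}{p-1}$ bound in Lemma~\ref{Grad-Curl} and of $\|\varrho^{(1)}\|_{\mathrm{L}^p(\mathcal{O})}\leq|\mathcal{O}|^{1/p}\|\varrho^{(1)}\|_{\mathrm{L}^{\infty}(\mathcal{O})}$ — and one must check that $G$ can be chosen uniformly in $p$ and across the iteration, which uses the $\mathrm{L}^{\infty}$-in-time control of $\varrho^{(1)}$, of $y(\theta_{\cdot}\omega)$, and of $\|\w\|_{\L^{4}(\mathcal{O})}$ coming from the first part and Definition~\ref{GWS}. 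The limit passages $\nu\to 0$ and $p\to+\infty$ in the first part are routine once the $\nu$- and $p$-uniform estimate for $\varrho_\nu$ is in place.
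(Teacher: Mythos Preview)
Your proposal is correct and follows essentially the same Yudovich-type argument as the paper: both derive $\mathrm{L}^p$-bounds on the vorticity of the viscous approximations by testing \eqref{C-SNSE} with $|\varrho_\nu|^{p-2}\varrho_\nu$, invoke Lemma~\ref{Grad-Curl} for the $\mathbb{W}^{1,p}$-regularity, and then run the log-Lipschitz uniqueness by estimating $|b(\w,\v^{(1)},\w)|\leq\|\nabla\v^{(1)}\|_{\L^p}\|\w\|^2_{\L^{2p/(p-1)}}$ and letting $p\to\infty$ on a short interval. The only cosmetic differences are that the paper obtains the $\mathrm{L}^\infty$-vorticity bound \eqref{UNS} via the parabolic maximum principle rather than by sending $p\to\infty$ in the $\mathrm{L}^p$-estimate, and that in the uniqueness step the paper interpolates $\|\w\|_{\L^{2p/(p-1)}}$ between $\L^2$ and $\L^p$ (then using \eqref{SE-2D}) instead of between $\L^2$ and $\L^4$ as you do; both variants yield the same conclusion.
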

\begin{proof}
	The proof is divided into the following two steps:
	\vskip 2mm
	\noindent
	\textbf{Step I:} \textit{Claim:  $\v_{\nu},\ \v\in \mathrm{L}^{\infty}(\tau,\tau+T;{\mathbb{W}}^{1,p}(\mathcal{O}))\cap\mathrm{L}^{\infty}(\tau,\tau+T;\L^{\infty}(\mathcal{O}))$ for $p\in[2,\infty)$.}
	
	It implies from \eqref{SE-2D} and \eqref{AE10} that 
	\begin{align}\label{UN1}
		\|\v_{\nu}\|_{\mathrm{L}^{\infty}(\tau,\tau+T;{\mathbb{L}}^{p}(\mathcal{O}))}&\leq p^{\frac{1}{2}} C(\omega) \|\v_{\nu}\|_{\mathrm{L}^{\infty}(\tau,\tau+T;\V)}\nonumber\\& \leq p^{\frac{1}{2}} C(T, \omega,\|\v_{\tau}\|_{\V},\|\f\|_{\mathrm{L}^{2}(\tau,\tau+T;\L^2(\mathcal{O}))},\|\nabla\land\f\|_{\mathrm{L}^{2}(\tau,\tau+T;\mathrm{L}^2(\mathcal{O}))}), 
	\end{align}
for $p\in[2,\infty)$. Taking the inner product of the first equation of \eqref{C-SNSE} with $|\varrho_{\nu}|^{p-2}\varrho_{\nu}$, for  $p\in[2,\infty)$, we obtain 
	\begin{align}\label{UN12}
	\frac{1}{p}&\frac{\d}{\d t}\|\varrho_{\nu}(t)\|^p_{\mathrm{L}^p(\mathcal{O})}+p\nu\||\nabla\varrho_{\nu}(t)||\varrho_{\nu}(t)|^{\frac{p-2}{2}}\|^2_{\mathrm{L}^2(\mathcal{O})}+\frac{\sigma^2}{2}\|\varrho_{\nu}(t)\|^p_{\mathrm{L}^p(\mathcal{O})}\nonumber\\&=e^{-\sigma y(\theta_t\omega)}\int_{\mathcal{O}}(\nabla\land\boldsymbol{f}(t,x))|\varrho_{\nu}(t,x)|^{p-2}\varrho_{\nu}(t,x)\d x+\sigma y(\theta_{t}\omega)\|\varrho_{\nu}(t)\|^p_{\mathrm{L}^p(\mathcal{O})}\nonumber\\&\leq  e^{-\sigma y(\theta_t\omega)}\|\nabla\land\boldsymbol{f}(t)\|_{\mathrm{L}^p(\mathcal{O})}\|\varrho_{\nu}(t)\|_{\mathrm{L}^p(\mathcal{O})}^{p-1}+\sigma |y(\theta_{t}\omega)|\|\varrho_{\nu}(t)\|^p_{\mathrm{L}^p(\mathcal{O})}\nonumber\\&\leq C e^{-p\sigma y(\theta_t\omega)}\|\nabla\land\f(t)\|^p_{\mathrm{L}^p(\mathcal{O})}+(1+\sigma|y(\theta_{t}\omega)|)\|\varrho_{\nu}(t)\|^p_{\mathrm{L}^p(\mathcal{O})},
	\end{align}
for a.e. $t\geq \tau$, which implies (due to Gronwall's inequality)
\begin{align*}
	&\sup_{t\in[\tau,\tau+T]}\|\varrho_{\nu}(t)\|_{\mathrm{L}^p(\mathcal{O})}^p\nonumber\\&\leq p\bigg[\frac{1}{p} \|\nabla\land\v_{\tau}\|_{\mathrm{L}^p(\mathcal{O})}^p +C\int_{\tau}^{\tau+T}e^{-p\sigma y(\theta_{\xi}\omega)}\|\nabla\land\f(\xi)\|^p_{\mathrm{L}^p(\mathcal{O})}\d\xi\bigg]e^{p\int_{\tau}^{\tau+T}(1+\sigma |y(\theta_{r}\omega)|)\d r}.
\end{align*}
Therefore, we infer
\begin{align*}
	\sup_{t\in[\tau,\tau+T]}\|\varrho_{\nu}(t)\|_{\mathrm{L}^p(\mathcal{O})}\leq p^{\frac{1}{p}}C(T,\omega,\mathcal{O},\|\nabla\land\u_{\tau}\|_{\mathrm{L}^{\infty}(\mathcal{O})},\|\nabla\land\f\|_{\mathrm{L}^{\infty}(\tau,\tau+T;\mathrm{L}^{\infty}(\mathcal{O}))}).
\end{align*}
In view of Lemma \ref{Grad-Curl}, we deduce
\begin{align}\label{UN2}
		\|\nabla\v_{\nu}\|_{\mathrm{L}^{\infty}(\tau,\tau+T;{\mathbb{L}}^{p}(\mathcal{O}))}\leq \frac{p^{\frac{1}{p}}p^2}{p-1}C(T,\omega,\mathcal{O},\|\nabla\land\u_{\tau}\|_{\mathrm{L}^{\infty}(\mathcal{O})},\|\nabla\land\f\|_{\mathrm{L}^{\infty}(\tau,\tau+T;\mathrm{L}^{\infty}(\mathcal{O}))}),
	\end{align}
for $p\in[2,\infty)$. Since the bounds in \eqref{UN1} and \eqref{UN2} are independent of $\nu$, one can obtain by passing limit $\nu\to0$ that 
\begin{equation}\label{UN3}
	\left\{
	\begin{aligned}
		\|\v\|_{\mathrm{L}^{\infty}(\tau,\tau+T;{\mathbb{L}}^{p}(\mathcal{O}))}& \leq p^{\frac{1}{2}} C(T, \omega,\|\u_{\tau}\|_{\V},\|\f\|_{\mathrm{L}^{2}(\tau,\tau+T;\V)}), \\
	\|\nabla\v\|_{\mathrm{L}^{\infty}(\tau,\tau+T;{\mathbb{L}}^{p}(\mathcal{O}))}&\leq \frac{p^{\frac{1}{p}}p^2}{p-1}C(T,\omega,\mathcal{O},\|\nabla\land\u_{\tau}\|_{\mathrm{L}^{\infty}(\mathcal{O})},\|\nabla\land\f\|_{\mathrm{L}^{\infty}(\tau,\tau+T;\mathrm{L}^{\infty}(\mathcal{O}))}),
	\end{aligned}
	\right.
\end{equation}
for $p\in[2,\infty)$. Moreover, Sobolev's inequality  provides $\v_{\nu}, \v\in\mathrm{L}^{\infty}(\tau,\tau+T;\L^{\infty}(\mathcal{O}))$. Let us now apply the maximum principle (see \cite[Subsection 7.1.4]{Evans}) for $\varrho_{\nu}$, which is a solution of \eqref{C-SNSE}, to deduce 
\begin{align}\label{UN45}
	&\|\varrho_{\nu}(t)\|_{\mathrm{L}^{\infty}(\mathcal{O})}\nonumber\\&\leq e^{\int_{\tau}^{t}\left[-\frac{\sigma^2}{2}+2\sigma y(\theta_{r}\omega)\right]\d r}\|\varrho_{\nu}(\tau)\|_{\mathrm{L}^{\infty}(\mathcal{O})}+\int_{\tau}^{t}e^{\int_{\xi}^{t}\left[-\frac{\sigma^2}{2}+2\sigma y(\theta_{r}\omega)\right]\d r}e^{-\sigma y(\theta_{\xi}\omega)}\|\nabla\land\f(\xi)\|_{\mathrm{L}^{\infty}(\mathcal{O})}\d\xi,
\end{align}
for all $t\geq \tau$, which immediately gives
\begin{align}
	\|\varrho_{\nu}\|_{\mathrm{L}^{\infty}(\tau,\tau+T;\mathrm{L}^{\infty}(\mathcal{O}))}\leq C(T,\omega)\big[\|\nabla\land\v_{\tau}\|_{\mathrm{L}^{\infty}(\mathcal{O})}+\|\nabla\land\f\|_{\mathrm{L}^{\infty}(\tau,\tau+T;\mathrm{L}^{\infty}(\mathcal{O}))}\big].
\end{align}
Therefore, from \eqref{COV} and \eqref{UN45}, one can easily deduce \eqref{UNS}. 
	\vskip 2mm
\noindent
\textbf{Step II:} \textit{The solution of the system \eqref{1} obtained in Theorem \ref{Existence-Eu} is unique.} Let $\v_1(\cdot)$ and $\v_2(\cdot)$ be two solutions of the system \eqref{EuEq} with $\v_1(\tau)=\v_2(\tau)=\v_{\tau}$. Then $\v(\cdot):=\v_1(\cdot)-\v_2(\cdot)$ satisfies  for a.e. $t\geq \tau$ and $ \boldsymbol{\phi}\in\V$
\begin{align}
		\left\langle\frac{\d\v}{\d t},\boldsymbol{\phi}\right\rangle+\left[\frac{\sigma^2}{2}-y(\theta_{t}\omega)\right](\v,\boldsymbol{\phi})+e^{\sigma y(\theta_{t}\omega)}\langle(\v\cdot\nabla)\v_1,\boldsymbol{\phi}\rangle+e^{\sigma y(\theta_{t}\omega)}\langle(\v_2\cdot\nabla)\v,\boldsymbol{\phi}\rangle=0. 
\end{align}
Taking $\boldsymbol{\phi}=\v$, we attain (for $p>3$)
\begin{align}\label{UN4}
	\frac{\d}{\d t}\|\v(t)\|^2_{\H}&=-\left[\sigma^2-2\sigma y(\theta_{t}\omega)\right]\|\v(t)\|^2_{\H}-2e^{\sigma y(\theta_{t}\omega)}\langle(\v(t)\cdot\nabla)\v_1(t),\v(t)\rangle\nonumber\\&\leq 2e^{\sigma y(\theta_{t}\omega)} \|\nabla\v_1(t)\|_{\L^p(\mathcal{O})}\|\v(t)\|^2_{\L^{\frac{2p}{p-1}}(\mathcal{O})}+2\sigma|y(\theta_{t}\omega)|\|\v(t)\|^2_{\H}\nonumber\\&\leq 2e^{\sigma y(\theta_{t}\omega)} \|\nabla\v_1(t)\|_{\L^p(\mathcal{O})}\|\v(t)\|^{\frac{2}{p-2}}_{\L^{p}(\mathcal{O})}\|\v(t)\|^{\frac{2(p-3)}{p-2}}_{\H}+2\sigma|y(\theta_{t}\omega)|\|\v(t)\|^2_{\H},
\end{align}
for a.e. $t\geq\tau$. We also have 
\begin{align}\label{UN5}
	\frac{\d}{\d t}\|\v(t)\|^2_{\H}&=\frac{\d}{\d t}\left[\|\v(t)\|^{\frac{2}{p-2}}_{\H}\right]^{p-2}=(p-2)\|\v(t)\|_{\H}^{\frac{2(p-3)}{p-2}}\frac{\d}{\d t}\|\v(t)\|^{\frac{2}{p-2}}_{\H},
\end{align}
for a.e. $t\geq\tau$, which implies from \eqref{UN4} and 
 $\|\v\|_{\H}^{\frac{2(p-3)}{p-2}}\geq0$ that 
\begin{align}\label{UN6}
	\frac{\d}{\d t}\|\v(t)\|^{\frac{2}{p-2}}_{\H}\leq\frac{2e^{\sigma y(\theta_{t}\omega)}}{p-2} \|\nabla\v_1(t)\|_{\L^p(\mathcal{O})}\|\v(t)\|^{\frac{2}{p-2}}_{\L^{p}(\mathcal{O})}+\frac{2\sigma|y(\theta_{t}\omega)|}{p-2}\|\v(t)\|^{\frac{2}{p-2}}_{\H},
\end{align}
for a.e. $t\geq\tau$. Now, applying Gronwall's inequality in \eqref{UN6} from $\tau$ to $t$ with $t\geq\tau$, we infer
\begin{align}\label{UN35}
	\|\v(t)\|^2_{\H}&\leq    \left[\frac{2}{p-2}\int_{\tau}^{t}e^{\sigma y(\theta_{\xi}\omega)} \|\nabla\v_1(\xi)\|_{\L^p(\mathcal{O})}\|\v(\xi)\|^{\frac{2}{p-2}}_{\L^{p}(\mathcal{O})}\d\xi\right]^{p-2}e^{2\sigma\int_{\tau}^{t}|y(\theta_{\xi}\omega)|\d\xi}
	\nonumber\\
	&\leq [C(t-\tau)]^{p-2}(Cp)^2 \frac{1}{p^{\frac{2}{p}}}\left[\frac{p^2}{(p-1)(p-2)}\right]^{p-2}e^{2\sigma\int_{\tau}^{t}|y(\theta_{\xi}\omega)|\d\xi},
\end{align}
where we have used the bounds obtained in \eqref{UN3} and the  constant $C$ appearing in \eqref{UN35} $$C=C(T,\omega,\mathcal{O},\|\u_{\tau}\|_{\V},\|\f\|_{\mathrm{L}^{2}(\tau,\tau+T;\L^2(\mathcal{O}))},\|\nabla\land\u_{\tau}\|_{\mathrm{L}^{\infty}(\mathcal{O})},\|\nabla\land\f\|_{\mathrm{L}^{\infty}(\tau,\tau+T;\mathrm{L}^{\infty}(\mathcal{O}))})>0,$$ is independent of $p$. Let us now take $t$ sufficiently small such that $C(t-\tau)<1$. Further, it provides us that $\lim\limits_{p\to+\infty}[C(t-\tau)]^{p-2}(Cp)^2e^{2\sigma\int_{\tau}^{t}|y(\theta_{\xi}\omega)|\d\xi}=0,$ while $\lim\limits_{p\to+\infty}p^{-\frac{2}{p}}\left[\frac{p^2}{(p-1)(p-2)}\right]^{p-2}=e^3$. Hence $\v(t)=0$ if $C(t-\tau)<1$. We repeat the same arguments a finite number of times to cover the whole interval $[\tau,\tau+T]$, so that the uniqueness is proved.
 \end{proof}

\begin{remark}
	Using Moser estimates, Kato-Ponce commutator estimates (see \cite{MB}), and Beale-Kato-Majda type logarithmic Sobolev  inequality (see \cite[Lemma 2.1]{HV} and \cite[Section 9]{HV}), the existence of a \textsl{global smooth solution} for the system \eqref{1} (for $\f=\mathbf{0}$) has been established in \cite[Theorem 4.6, part(1)]{HV} for the initial data $$\u_\tau\in \mathbb{X}_{m,p}:=\left\{\u\in\mathbb{W}^{m,p}(\mathcal{O}):\nabla\cdot\u=0 \ \text{ and }\ \u\cdot\boldsymbol{n}\big|_{\partial\mathcal{O}}=0\right\}, $$ where $ m>2/p+1$, $p\in[2,\infty)$. The existence of global smooth solution is obtained by showing  that $\sup\limits_{t\in[0,T]}\|\u(t)\|_{\mathbb{W}^{1,\infty}}<\infty$, $\mathbb{P}$-a.s. Therefore, if we take our initial data $\u_\tau\in \mathbb{X}_{m,p},$ for $ m>2/p+1$ with  $\nabla\land\u_{\tau}\in\mathrm{L}^{\infty}(\mathcal{O})$, then the unique weak solution of the system \eqref{1} obtained in Theorems \ref{Existence-Eu} and \ref{Uniqueness} is smooth satisfying $	\u(\cdot)\in\mathrm{C}([\tau,\infty);\mathbb{X}_{m,p})$, $\mathbb{P}$-a.s. 	
	
	If $\boldsymbol{u}\in \mathrm{L}^{\infty}(0,T;\mathbb{W}^{1,\infty}(\mathcal{O}))$, $\mathbb{P}$-a.s., 	 then one can easily obtain the continuous dependence on the data also. In fact an exponential stability result for $\sigma>0$. For instance, an estimate similar to \eqref{UN4} yields 
	\begin{align}
		\frac{\mathrm{d}}{\mathrm{d}t}\|\boldsymbol{v}(t)\|_{\mathbb{H}}^2+\sigma^2\|\boldsymbol{v}(t)\|_{\mathbb{H}}^2&=-2e^{\sigma y(\theta_{t}\omega)}\langle(\boldsymbol{v}(t)\cdot\nabla)\boldsymbol{v}_1(t),\boldsymbol{v}(t)\rangle+2\sigma y(\theta_{t}\omega)\|\boldsymbol{v}(t)\|_{\mathbb{H}}^2\nonumber\\&\leq2\bigg[e^{\sigma y(\theta_{t}\omega)}\|\nabla\boldsymbol{v}_1(t)\|_{\mathbb{L}^{\infty}}+\sigma |y(\theta_{t}\omega)|\bigg]\|\boldsymbol{v}(t)\|_{\mathbb{H}}^2,
	\end{align}
	for a.e. $t\in[\tau,\tau+T]$. Therefore variation of constants formula gives 
	\begin{align}
		\|\boldsymbol{v}(t)\|_{\mathbb{H}}^2\leq e^{-\sigma^2(t-\tau)}\|\boldsymbol{v}_{\tau}\|_{\mathbb{H}}^2\exp\left(2\left[\int_{\tau}^te^{\sigma y(\theta_{\xi}\omega)}\|\nabla\boldsymbol{v}_1(\xi)\|_{\mathbb{L}^{\infty}}+\sigma |y(\theta_{\xi}\omega)|\right]\mathrm{d}\xi\right), \ \mathbb{P}\text{-a.s.,}
	\end{align}
	for all $t\in[\tau,\tau+T]$, where $\boldsymbol{v}_{\tau}=\boldsymbol{v}_1(\tau)-\boldsymbol{v}_2(\tau)$. 
\end{remark}

\section{Pullback stochastic weak attractor and stochastic weak attractor}\label{sec4}\setcounter{equation}{0}

In this section, we demonstrate the existence of \textsl{pullback stochastic weak attractors (stochastic weak attractors)} for 2D non-autonomous (autonomous) stochastic EE driven by a linear multiplicative white noise. 
 We apply the abstract theory (Theorems \ref{SWA} and \ref{SWA1}) developed in \cite{HB2000} to prove the main result of this section.

\subsection{Existence of pullback stochastic weak attractor}
Let $\delta$, $\d$ and $\d_{\infty}$ be  the metrics induced by the norms on $\H$, $\V$ and $\mathrm{L}^{\infty}(\mathcal{O})$, respectively. We denote by 
\begin{align}\label{Wspace}
	\mathcal{W}=\{\v\in\V: \nabla\land\v\in\mathrm{L}^{\infty}(\mathcal{O})\}.
\end{align}
Also, let $\d_{\mathcal{W}}$ be the metric on $\mathcal{W}$ which is defined by
\begin{align*}
	\d_{\mathcal{W}}(\u,\v)=\d(\u,\v)+\d_{\infty}(\nabla\land\u,\nabla\land\v)=\|\u-\v\|_{\V}+\|\nabla\land\u-\nabla\land\v\|_{\mathrm{L}^{\infty}(\mathcal{O})}.
\end{align*}
Theorems \ref{Existence-Eu} and \ref{Uniqueness} allow us to define a family $\{S(t,\tau,\omega)\}_{t\geq\tau, \omega\in\Omega}$ by $S(t,\tau,\omega):\mathcal{W}\to\mathcal{W}$ such that 
\begin{align}
	S(t,\tau,\omega)\u_\tau:=\u(t;\tau,\omega,\u_{\tau}).
\end{align}
The following lemma proves that the family $\{S(t,\tau,\omega)\}_{t\geq \tau, \omega\in\Omega}$ satisfies Hypothesis \ref{H1}.

\begin{lemma}\label{H111}
	The family of mappings $\{S(t,\tau,\omega)\}_{t\geq \tau, \omega\in\Omega}$ associated to the system \eqref{1} satisfies Hypothesis \ref{H1}.
\end{lemma}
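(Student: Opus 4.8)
The plan is to verify the two clauses of Hypothesis \ref{H1} in turn, establishing the \emph{evolution property} (ii) first so that it is available when proving the $\delta$-continuity (i). For (ii): by Theorems \ref{Existence-Eu} and \ref{Uniqueness}, every $\w\in\mathcal{W}$ admits a unique global weak (analytic) solution $\u(\cdot\,;\tau,\omega,\w)$, and the a priori bounds \eqref{AE} and \eqref{UNS} show that $\u(r;\tau,\omega,\w)\in\mathcal{W}$ for every $r\geq\tau$, so that $S(r,\tau,\omega)$ genuinely maps $\mathcal{W}$ into $\mathcal{W}$. Fixing $\tau\leq r\leq t$, the restriction to $[r,\infty)$ of $\u(\cdot\,;\tau,\omega,\w)$ is a weak (analytic) solution of \eqref{1} on $[r,\infty)$ with datum $\u(r;\tau,\omega,\w)\in\mathcal{W}$; by the uniqueness part of Theorem \ref{Uniqueness} it must coincide with $\u(\cdot\,;r,\omega,\u(r;\tau,\omega,\w))$, which is exactly the identity $S(t,r,\omega)S(r,\tau,\omega)\w=S(t,\tau,\omega)\w$.

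For (i), let $\{\w_n\}\subset\mathcal{W}$ be $\d_{\mathcal{W}}$-bounded with $\delta(\w_n,\w)=\|\w_n-\w\|_{\H}\to0$ and $\w\in\mathcal{W}$; set $\u_n=S(\cdot\,,\tau,\omega)\w_n$, $\u=S(\cdot\,,\tau,\omega)\w$ and pass, via \eqref{COV}, to $\v_n=e^{-\sigma y(\theta_{\cdot}\omega)}\u_n$, $\v=e^{-\sigma y(\theta_{\cdot}\omega)}\u$, which solve \eqref{EuEq}. Since $\{\w_n\}$ is $\d_{\mathcal{W}}$-bounded, $\sup_n\|\w_n\|_{\V}$ and $\sup_n\|\nabla\land\w_n\|_{\mathrm{L}^{\infty}(\mathcal{O})}$ are finite, so the estimates obtained in Step I of the proof of Theorem \ref{Uniqueness} (the maximum principle for \eqref{C-SNSE} together with Lemma \ref{Grad-Curl}) yield, for every $T>0$ and $p\in(3,\infty)$, bounds for $\sup_{n}\|\v_n\|_{\mathrm{L}^{\infty}(\tau,\tau+T;\L^{p}(\mathcal{O}))}$ and $\sup_{n}\|\nabla\v_n\|_{\mathrm{L}^{\infty}(\tau,\tau+T;\L^{p}(\mathcal{O}))}$ that are \emph{independent of $n$} (and grow like $p^{1/2}$ and $p^{1/p}p^{2}/(p-1)$ in $p$, respectively); the same holds for $\v$. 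Repeating the computation \eqref{UN4}--\eqref{UN35} for $\z:=\v_n-\v$ --- whose nonlinear contribution reduces to $-2e^{\sigma y(\theta_{t}\omega)}\langle(\z\cdot\nabla)\v_n,\z\rangle$ after using $\langle(\v\cdot\nabla)\z,\z\rangle=0$ --- and applying Gronwall's inequality from $\tau$ to $t$ gives
\begin{align*}
\|\z(t)\|_{\H}^{2}\leq e^{2\sigma\int_{\tau}^{t}|y(\theta_{\xi}\omega)|\d\xi}\Big[\|\z(\tau)\|_{\H}^{\frac{2}{p-2}}+b_p(t-\tau)\Big]^{p-2},
\end{align*}
where $b_p(s)$ depends only on $T,\omega,\f,\sup_n\|\w_n\|_{\mathcal{W}}$ and $p$, is independent of $n$, and satisfies $\limsup_{p\to\infty}b_p(s)\leq Ks$ for a constant $K$ independent of $n,p$ and all $s\in(0,T]$.

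The convergence now follows from a two-parameter limiting argument. Choose $\eta\in(0,T]$ with $K\eta<1$ and $\theta\in(K\eta,1)$; then $b_p(t-\tau)\leq\theta$ for all $t\in[\tau,\tau+\eta]$ once $p$ is large enough. Since $\|\z(\tau)\|_{\H}=e^{-\sigma y(\theta_{\tau}\omega)}\|\w_n-\w\|_{\H}\to0$ as $n\to\infty$ for each fixed $p$, letting $n\to\infty$ in the displayed inequality gives $\limsup_{n\to\infty}\|\z(t)\|_{\H}^{2}\leq e^{2\sigma\int_{\tau}^{t}|y(\theta_{\xi}\omega)|\d\xi}\,\theta^{p-2}$, and then $p\to\infty$ forces $\|\z(t)\|_{\H}\to0$, hence $\|\u_n(t)-\u(t)\|_{\H}=e^{\sigma y(\theta_{t}\omega)}\|\z(t)\|_{\H}\to0$, for every $t\in[\tau,\tau+\eta]$. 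By \eqref{AE}, \eqref{UNS} and the evolution property (ii) already proved, $\{\u_n(r)\}_n$ is again $\d_{\mathcal{W}}$-bounded uniformly in $n$ and in $r\in[\tau,\tau+T]$, so $\eta$ can be taken independent of the restart time; iterating at $\tau+\eta,\tau+2\eta,\dots$ covers $[\tau,\tau+T]$ in finitely many steps, and $T$ is arbitrary. This yields $\delta(S(t,\tau,\omega)\w_n,S(t,\tau,\omega)\w)\to0$ for every $t\geq\tau$, i.e. clause (i).

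I expect the main obstacle to be exactly this interlocking of the limits $n\to\infty$ and $p\to\infty$ in part (i): one must keep every constant independent of $n$ --- which is where the $\d_{\mathcal{W}}$-boundedness of $\{\w_n\}$, not merely its $\V$-boundedness, is essential, since it is the uniform $\mathrm{L}^{\infty}$-control of $\nabla\land\w_n$ that feeds Lemma \ref{Grad-Curl} --- and then take the two limits in the correct order, exactly as in Yudovich's uniqueness argument but now with a nonzero, though vanishing, contribution from the initial data. Part (ii), by contrast, is an immediate consequence of uniqueness.
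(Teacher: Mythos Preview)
Your proposal is correct and follows essentially the same route as the paper: the evolution property is read off from uniqueness (Theorem~\ref{Uniqueness}), and clause~(i) is obtained by rerunning the Yudovich-type estimate \eqref{UN4}--\eqref{UN6} on the difference of two solutions, letting $n\to\infty$ first and then $p\to\infty$ on a short time interval, and iterating. The only cosmetic differences are that the paper uses the decomposition leading to $\langle(\mathscr{W}_n\cdot\nabla)\w,\mathscr{W}_n\rangle$ (i.e.\ $\|\nabla\w\|_{\L^p}$ rather than your $\|\nabla\v_n\|_{\L^p}$), and organises the $p$-dependent constants slightly differently in the final bound; both choices are equivalent since the $\d_{\mathcal{W}}$-boundedness gives the needed $\mathrm{L}^\infty$ vorticity control uniformly over the whole family.
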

\begin{proof}
	The evolution property holds due to the uniqueness of the solution in $\mathcal{W}$ (see Theorem \ref{Uniqueness}). Now, fix $\omega\in \Omega$ and take $t\geq \tau$. Let $\{\w^n_{\tau}\}_{n\in\N}$ be a sequence which is $\d_{\mathcal{W}}$-bounded and $\delta$-convergent to $\w_{\tau}$ in $\mathcal{W}$, that is, there exists a positive number $K$ such that 
	\begin{align}\label{H10}
		\|\w_{\tau}^n\|_{\mathcal{W}}\leq K, \ \text{ for all } \ n\in\N\ \text{ and }\ \lim_{n\to+\infty}\|\w_{\tau}^n-\w_{\tau}\|_{\H}=0.
	\end{align}
	 Let $\w^n(\cdot)$ and $\w(\cdot)$ be the unique solutions to the system \eqref{1} corresponding to the initial data $\w^n_{\tau}$ and $\w_{\tau}$, respectively. Then $\mathscr{W}_n(\cdot):=\w^n(\cdot)-\w(\cdot)$ satisfies for a.e. $t\in[\tau,\tau+T]$ and $ \boldsymbol{\phi}\in\V$
	\begin{align*}
		&\left\langle\frac{\d\mathscr{W}_n}{\d t},\boldsymbol{\phi}\right\rangle+\left[\frac{\sigma^2}{2}-\sigma y(\theta_{t}\omega)\right](\mathscr{W}_n,\boldsymbol{\phi})=-e^{\sigma y(\theta_{t}\omega)}\langle(\mathscr{W}_n\cdot\nabla)\w,\boldsymbol{\phi}\rangle-e^{\sigma y(\theta_{t}\omega)}\langle(\w^n\cdot\nabla)\mathscr{W}_n,\boldsymbol{\phi}\rangle.
	\end{align*}
	Taking $\boldsymbol{\phi}=\mathscr{W}_n$ and arguing similarly as in the proof of Theorem \ref{Uniqueness} (see \eqref{UN6}), we obtain (for $p>3$) 
	\begin{align}\label{H11}
		&\frac{\d}{\d t}\|\mathscr{W}_n(t)\|^{\frac{2}{p-2}}_{\H}\nonumber\\&\leq\frac{2e^{\sigma y(\theta_{t}\omega)}}{p-2} \|\nabla\w(t)\|_{\L^p(\mathcal{O})}\|\mathscr{W}_n(t)\|^{\frac{2}{p-2}}_{\L^{p}(\mathcal{O})}+\frac{2\sigma|y(\theta_{t}\omega)|}{p-2}\|\mathscr{W}_n(t)\|^{\frac{2}{p-2}}_{\H}\nonumber\\&\leq \frac{p^2Ce^{\sigma y(\theta_{t}\omega)}}{(p-1)(p-2)}\|\nabla\land\w(t)\|_{\mathrm{L}^p(\mathcal{O})} \left[p^{\frac{1}{2}}C\|\mathscr{W}_n(t)\|_{\V}\right]^{\frac{2}{p-2}}+\frac{2\sigma|y(\theta_{t}\omega)|}{p-2}\|\mathscr{W}_n(t)\|^{\frac{2}{p-2}}_{\H}\nonumber\\& \leq  \frac{p^2C(pC)^{\frac{1}{p-2}}e^{\sigma y(\theta_{t}\omega)}}{(p-1)(p-2)} \|\nabla\land\w(t)\|_{\mathrm{L}^{\infty}(\mathcal{O})} \bigg[\|\w^n(t)\|_{\V}+\|\w(t)\|_{\V}\bigg]^{\frac{2}{p-2}}+\frac{2\sigma|y(\theta_{t}\omega)|}{p-2}\|\mathscr{W}_n(t)\|^{\frac{2}{p-2}}_{\H}\nonumber\\& \leq  \frac{p^2C_{T}(pC_{K,T})^{\frac{1}{p-2}}e^{\sigma y(\theta_{t}\omega)}}{(p-1)(p-2)}+\frac{2\sigma|y(\theta_{t}\omega)|}{p-2}\|\mathscr{W}_n(t)\|^{\frac{2}{p-2}}_{\H}, 
	\end{align}
	for a.e. $t\geq\tau$, where we have used inequalities \eqref{AE10}, \eqref{SE-2D} and \eqref{Grad-Curl1}, and the fact that $\w^n,  \w \in \mathcal{W}$. Also, the constants $C_{T}$ and $C_{K,T}$ are independent of $p$. Now, integration of \eqref{H11} from $\tau$ to $t$ with $t\geq\tau$ yields
	\begin{align}\label{H22}
		\|\mathscr{W}_n(t)\|^{2}_{\H}&\leq \bigg[\|\w^n_{\tau}-\w_{\tau}\|^{\frac{2}{p-2}}_{\H}  +  \frac{p^2C_{T}(pC_{K,T})^{\frac{1}{p-2}}}{(p-1)(p-2)} \int_{\tau}^{t}e^{\sigma y(\theta_{\xi}\omega)}\d\xi\bigg]^{p-2}e^{2\sigma\int_{\tau}^{t}|y(\theta_{\xi}\omega)|\d\xi}.
	\end{align}
Further, $\lim\limits_{n\to+\infty}\|\w_{\tau}^n-\w_{\tau}\|_{\H}=0$ provides for all $t\geq\tau $
	\begin{align}\label{H33}
	\lim_{n\to+\infty}\|\mathscr{W}_n(t)\|^{2}_{\H}&\leq pC_{K,T} \bigg[C_{T}\int_{\tau}^{t}e^{\sigma y(\theta_{\xi}\omega)}\d\xi\bigg]^{p-2}   \left[\frac{p^2}{(p-1)(p-2)}\right]^{p-2}e^{2\sigma\int_{\tau}^{t}|y(\theta_{\xi}\omega)|\d\xi},
\end{align}
	where the constants $C_{K,T}$ and $C_{T}$ appearing in \eqref{H33} are independent of $p$. Let us now take $t$ sufficiently small such that $C_{T}\int_{\tau}^{t}e^{\sigma y(\theta_{\xi}\omega)}\d\xi<1$. It also implies that $$\lim\limits_{p\to+\infty}pC_{K,T}\left[C_{T}\int_{\tau}^{t}e^{\sigma y(\theta_{\xi}\omega)}\d\xi\right]^{p-2}e^{2\sigma\int_{\tau}^{t}|y(\theta_{\xi}\omega)|\d\xi}=0 \ \ \ \text{ while }\ \ \lim\limits_{p\to+\infty}\left[\frac{p^2}{(p-1)(p-2)}\right]^{p-2}=e^3.$$ Hence $\lim\limits_{n\to+\infty}\|\w^n(t)-\w(t)\|_{\H}=0,$ for all $t\in(\tau,\tau+t_0)$ such that $C_{T}\int_{\tau}^{\tau+t_0}e^{\sigma y(\theta_{\xi}\omega)}\d\xi<1$. We repeat the same arguments a finite number of times to cover the whole interval $[\tau,\tau+T]$. Hence part (ii)  of Hypothesis \ref{H1} is satisfied.
\end{proof}

Our next aim is to show that the mapping $(\tau,\omega)\mapsto S(t,\tau,\omega)\w$ satisfies Hypothesis \ref{H2}. The following lemma shows that for all $t\in\R$ and $\w\in\mathcal{W}$, the mapping $(\tau,\omega)\mapsto S(t,\tau,\omega)\w$ is measurable from $((-\infty,t],\mathscr{B}((-\infty,t])$ to $(\mathcal{W},\mathscr{B}(\mathbb{H})\cap\mathcal{W})$.

\begin{lemma}\label{H222}
	For a given $\u^*_{\tau}\in\mathcal{W}$, $t\in\R$ and $\omega\in\Omega$, the $\mathcal{W}$-valued mapping $\tau\mapsto S(t,\tau,\omega)\u^*_{\tau}$ is continuous from $(-\infty,t]$ to $\H$. 
\end{lemma}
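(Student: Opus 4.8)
The plan is to prove sequential continuity. Fix $\u^*_{\tau}\in\mathcal{W}$ (a single element of $\mathcal{W}$), $t\in\R$ and $\omega\in\Omega$, and let $\tau_n\to\tau_0$ in $(-\infty,t]$; passing to subsequences (a sequence converges iff every subsequence has a further one with the same limit), we may assume all $\tau_n\in[a,t]$ for some $a<t$ and that $(\tau_n)_{n\in\N}$ is monotone. Since $S(t,\tau,\omega)\u^*_{\tau}=e^{\sigma y(\theta_{t}\omega)}\v(t;\tau,\omega,\v_\tau)$ with $\v_\tau:=e^{-\sigma y(\theta_\tau\omega)}\u^*_{\tau}$ and $e^{\sigma y(\theta_{t}\omega)}$ a fixed constant, it suffices to show $\v(t;\tau_n,\omega,\v_{\tau_n})\to\v(t;\tau_0,\omega,\v_{\tau_0})$ in $\H$; note too that the deterministic flow $\v(t;\tau,\omega,\cdot)$ inherits property $(\mathrm{i})$ of Hypothesis \ref{H1} (and its quantitative form \eqref{H22}) from $S$ via Lemma \ref{H111}, since the two differ only by the fixed positive scalars $e^{\pm\sigma y}$. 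We shall also use the evolution property (from uniqueness, Theorem \ref{Uniqueness}) and the a priori bounds \eqref{AE10}, \eqref{UNS}, which — as $s\mapsto y(\theta_s\omega)$ is continuous on $[a,t]$, $\f\in\mathrm{L}^2_{\mathrm{loc}}$ and $\nabla\land\f\in\mathrm{L}^{\infty}_{\mathrm{loc}}$ — bound the $\V$- and $\mathrm{L}^{\infty}$-vorticity norms of $\v(\cdot;\tau_n,\omega,\v_{\tau_n})$ on $[a,t]$ uniformly in $n$, making all the ``transported'' data below $\d_{\mathcal{W}}$-bounded.

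Suppose first $\tau_n\uparrow\tau_0$. By the evolution property, $\v(t;\tau_n,\omega,\v_{\tau_n})=\v(t;\tau_0,\omega,\zeta_n)$ with $\zeta_n:=\v(\tau_0;\tau_n,\omega,\v_{\tau_n})$, and $\{\zeta_n\}$ is $\d_{\mathcal{W}}$-bounded by \eqref{AE10} and \eqref{UNS}. I claim $\zeta_n\to\v_{\tau_0}$ in $\H$. Writing $\zeta_n-\v_{\tau_0}=(\zeta_n-\v_{\tau_n})+(\v_{\tau_n}-\v_{\tau_0})$, the second term tends to $0$ because $\v_{\tau_n}=e^{-\sigma y(\theta_{\tau_n}\omega)}\u^*_{\tau}\to e^{-\sigma y(\theta_{\tau_0}\omega)}\u^*_{\tau}=\v_{\tau_0}$ in $\mathcal{W}$; for the first term, $\zeta_n-\v_{\tau_n}=\int_{\tau_n}^{\tau_0}\frac{\d\v}{\d s}(s)\,\d s$ in $\V'$, and by an estimate as in \eqref{Derivative} (using the uniform $\V$-bound) $\frac{\d\v}{\d t}$ is bounded in $\mathrm{L}^2(\tau_n,t;\V')$ uniformly in $n$, so $\|\zeta_n-\v_{\tau_n}\|_{\V'}\leq(\tau_0-\tau_n)^{1/2}\|\frac{\d\v}{\d t}\|_{\mathrm{L}^2(\tau_n,t;\V')}\to0$; combining this with the uniform $\V$-bound and the interpolation $\|\w\|_{\H}^2\leq\|\w\|_{\V'}\|\w\|_{\V}$ gives $\|\zeta_n-\v_{\tau_n}\|_{\H}\to0$. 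As the initial time $\tau_0$ is now fixed, property $(\mathrm{i})$ of Hypothesis \ref{H1} (Lemma \ref{H111}) applies and yields $\v(t;\tau_0,\omega,\zeta_n)\to\v(t;\tau_0,\omega,\v_{\tau_0})$ in $\H$, which is the assertion.

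Suppose next $\tau_n\downarrow\tau_0$. Now the evolution property gives $\v(t;\tau_0,\omega,\v_{\tau_0})=\v(t;\tau_n,\omega,\eta_n)$ with $\eta_n:=\v(\tau_n;\tau_0,\omega,\v_{\tau_0})$; this time $\eta_n$ lies on the single trajectory $s\mapsto\v(s;\tau_0,\omega,\v_{\tau_0})\in\mathrm{C}([\tau_0,\infty);\H)$, so $\eta_n\to\v_{\tau_0}$ in $\H$ directly, and $\{\eta_n\}$ is $\d_{\mathcal{W}}$-bounded by \eqref{AE10} and \eqref{UNS}; likewise $\v_{\tau_n}\to\v_{\tau_0}$ in $\H$ with $\{\v_{\tau_n}\}$ $\d_{\mathcal{W}}$-bounded. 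Comparing the two solutions issued at time $\tau_n$ from $\eta_n$ and from $\v_{\tau_n}$ via the estimate \eqref{H22} — with the fixed time $\tau$ there replaced by $\tau_n$, its constants being uniform over $\tau_n\in[\tau_0,t]$, and, if $\int_{\tau_0}^{t}e^{\sigma y(\theta_\xi\omega)}\d\xi$ is not small, subdividing $[\tau_0,t]$ into finitely many short subintervals exactly as in the proof of Lemma \ref{H111} — we obtain $\|\v(t;\tau_n,\omega,\eta_n)-\v(t;\tau_n,\omega,\v_{\tau_n})\|_{\H}\to0$, i.e. $\v(t;\tau_n,\omega,\v_{\tau_n})\to\v(t;\tau_0,\omega,\v_{\tau_0})$ in $\H$. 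Together with the previous case this proves the lemma.

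The main obstacle is the convergence $\zeta_n\to\v_{\tau_0}$ in the first case: unlike $\eta_n$, the point $\zeta_n$ does not sit on a single trajectory, so one must quantify — uniformly in $n$ — how fast the solution $\v(\cdot;\tau_n,\omega,\v_{\tau_n})$ leaves its own initial value, which is exactly where the uniform-in-$n$ a priori bounds (hence the locally uniform control of $y(\theta_{\cdot}\omega)$ and of the forcing over $[a,t]$) are indispensable. A secondary technical point is to check that the stability estimate of Lemma \ref{H111} is genuinely uniform over the compact range $[a,t]$ of initial times, so that it may be invoked with the varying start time $\tau_n$ in the second case.
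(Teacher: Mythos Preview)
Your proof is correct. Both you and the paper ultimately combine three ingredients --- the evolution property, the $\d_{\mathcal{W}}$-bounded/$\delta$-continuity in the initial data (Lemma~\ref{H111}), and a short-time estimate saying the solution stays close to its own initial value --- but the two proofs organize these differently.

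The paper works ``by hand'': it fixes $\tau^*$ and proves directly, via the energy identity $\|\v(\xi)-\v^*_{\tau}\|_{\H}^2=\int_{\tau}^{\xi}\frac{\d}{\d\zeta}\|\v(\zeta)\|_{\H}^2\,\d\zeta-2(\v(\xi)-\v^*_{\tau},\v^*_{\tau})$ and a pairing with the equation, that $\|\v(\xi;\tau,\omega,\v^*_{\tau})-\v^*_{\tau}\|_{\H}<\varepsilon$ uniformly for $\tau,\xi$ in a small right-neighbourhood of $\tau^*$; the passage from this local statement to continuity at the fixed target time $t$ is then left largely implicit. Your argument is more structural: you use the evolution property to transport everything to a \emph{fixed} initial time ($\tau_0$ in the increasing case, $\tau_n$ in the decreasing case) and reduce to Lemma~\ref{H111}, proving the needed $\H$-convergence of the transported data by the slicker route $\zeta_n-\v_{\tau_n}=\int_{\tau_n}^{\tau_0}\tfrac{\d\v}{\d s}\,\d s$ in $\V'$ followed by the interpolation $\|\cdot\|_{\H}^2\le\|\cdot\|_{\V'}\|\cdot\|_{\V}$. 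This makes the role of Lemma~\ref{H111} explicit and avoids the somewhat ad~hoc splitting in \eqref{RLTT2}; the price is that in the decreasing case you must check (as you do) that the constants in \eqref{H22} are uniform over the compact range $[\tau_0,t]$ of initial times, so that the finite-subdivision argument from Lemma~\ref{H111} transfers.
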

\begin{proof}
	First we show that, for a given $\u^*_{\tau}\in\mathcal{W}$, $t\in\R$ and $\omega\in\Omega$, the $\mathcal{W}$-valued mapping $\tau\mapsto S(t,\tau,\omega)\u^*_{\tau}$ is right continuous from $(-\infty,t]$ to $\H$. Since $$S(t,\tau,\omega)\u^*_{\tau}=\u(t;\tau,\omega,\u^*_{\tau})=\v(t;\tau,\omega,\v^*_{\tau})e^{y(\theta_{t}\omega)},$$ it is enough to prove that $\v(t;\cdot,\omega,\v^*_{\tau})$ with $\v^*_{\tau}=\u^*_{\tau}e^{y(\theta_{\cdot}\omega)}$ is right continuous from $(-\infty,t]$ to $\H$. Now fix $\tau^*\in\R$, and take $\v^*_{\tau}\in\mathcal{W}$ and $\omega\in\Omega$. It is enough to show that for any given $\varepsilon>0$, there exists a real number $\varepsilon^*=\varepsilon^*(\varepsilon,\tau^*,\omega,\v^*_{\tau})>0$ such that 
	\begin{align}\label{RLTT1}
		\|\v(\xi;\tau,\omega,\v^*_{\tau})-\v^*_{\tau}\|_{\mathbb{H}}<\varepsilon \ \text{ whenever }\  \tau\in(\tau^*,\tau^*+\varepsilon^*), \ \xi\in(\tau,\tau^*+\varepsilon^*),
	\end{align}
	where $\v(\xi;\tau,\omega,\v^*_{\tau})$ is the solution of the system \eqref{1} with the initial data $\v^*_{\tau}$ and initial time $\tau$. 
We have
	\begin{align}\label{RLTT2}
		\|\v(\xi;\tau,\omega,\v^*_{\tau})-\v^*_{\tau}\|^2_{\H}&=\|\v(\xi;\tau,\omega,\v^*_{\tau})\|^2_{\H}-\|\v^*_{\tau}\|^2_{\H}-2(\v(\xi;\tau,\omega,\v^*_{\tau})-\v^*_{\tau},\v^*_{\tau})\nonumber\\&=\int_{\tau}^{\xi}\frac{\d}{\d\zeta}\|\v(\zeta;\tau,\omega,\v^*_{\tau})\|^2_{\H}\d\zeta-2(\v(\xi;\tau,\omega,\v^*_{\tau})-\v^*_{\tau},\v^*_{\tau}).
	\end{align}
	From \eqref{Div-free}, we have 
	\begin{align}\label{ei2}
		\frac{\d}{\d \zeta}\|\v(\zeta)\|^2_{\H}+\frac{\sigma^2}{2}\|\v(\zeta)\|^2_{\H}\leq \frac{2e^{-2\sigma y(\theta_{\zeta}\omega)}}{\sigma^2}\|\f(\zeta)\|^2_{\H}+2\sigma y(\theta_{\zeta}\omega)\|\v(\zeta)\|^2_{\H},
	\end{align}
which implies
\begin{align}\label{RLTT4}
	&\sup_{\tau\in[\tau^*-1,\tau^*+1]}\|\v(\zeta;\tau,\omega,\v^*_{\tau})\|^2_{\H}\nonumber\\&\leq\sup_{\tau\in[\tau^*-1,\tau^*+1]}\bigg[\|\v^*_{\tau}\|^2_{\H}+\frac{2}{\sigma^2}\int_{\tau}^{\zeta}e^{-2\sigma y(\theta_{r}\omega)}\|\f(r)\|^2_{\H}\d r\bigg]e^{2\sigma\int_{\tau}^{\zeta}| y(\theta_{r}\omega)|\d r}\nonumber\\&\leq\left[\|\v^*_{\tau}\|^2_{\H}+\frac{2}{\sigma^2}\int_{\tau^*-1}^{\zeta}e^{-2\sigma y(\theta_{r}\omega)}\|\f(r)\|^2_{\H}\d r\right]e^{2\sigma\int_{\tau^*-1}^{\zeta}| y(\theta_{r}\omega)|\d r},
\end{align}
which is finite and the right hand side is independent of $\tau$. From \eqref{ei2}, we infer
	\begin{align}\label{RLTT3}
		\int_{\tau}^{\xi}\frac{\d}{\d\zeta}\|\v(\zeta;\tau,\omega,\v^*_{\tau})\|^2_{\H}\d\zeta\leq \frac{2}{\sigma^2}\int_{\tau}^{\xi}z^2(\zeta,\omega)\|\f(\zeta)\|^2_{\H}\d\zeta+2\sigma\int_{\tau}^{\xi}|y(\theta_{\zeta}\omega)|\|\v(\zeta;\tau,\omega,\v^*_{\tau})\|^2_{\H}\d\zeta.
	\end{align}
It follows from \eqref{RLTT4}-\eqref{RLTT3} that there exists a real number $\varepsilon^*_1=\varepsilon^*_1(\varepsilon,\tau^*,\omega,\v^*_{\tau})>0$ such that
	\begin{align}\label{RLTT5}
		\int_{\tau}^{\xi}\frac{\d}{\d\zeta}\|\v(\zeta;\tau,\omega,\v^*_{\tau})\|^2_{\H}\d\zeta\leq \frac{\varepsilon^2}{2},  \ \text{ whenever }\  \tau\in(\tau^*,\tau^*+\varepsilon^*_1), \ \xi\in(\tau,\tau^*+\varepsilon^*_1),
	\end{align}
by the absolute continuity of the Lebesgue integral. 
	Now, using the first equation of the system \eqref{Div-free}, we have 
	\begin{align}\label{RLTT8}
		&|(\v(\xi;\tau,\omega,\v^*_{\tau})-\v^*_{\tau},{\v^*_{\tau}})|\nonumber\\&=\left|\left\langle\int^{\xi}_{\tau}\frac{\d}{\d\zeta}\v(\zeta;\tau,\omega,\v^*_{\tau})\d\zeta,{\v^*_{\tau}}\right\rangle\right|\nonumber\\&\leq\|{\v^*_{\tau}}\|_{\H}\bigg(\int^{\xi}_{\tau}\bigg\|\frac{\d}{\d\zeta}\v(\zeta;\tau,\omega,\v^*_{\tau})\bigg\|^2_{\H}\d\zeta\bigg)^{1/2}(\tau-\xi)^{1/2}\nonumber\\&\leq C\|{\v^*_{\tau}}\|_{\H}\bigg(\int^{\xi}_{\tau}\bigg[(1+|y(\theta_{\zeta}\omega)|)\|\v(\zeta;\tau,\omega,\v^*_{\tau})\|^2_{\H}+e^{2\sigma y(\theta_{\zeta}\omega)}\|\v(\zeta;\tau,\omega,\v^*_{\tau})\|^2_{\mathbb{L}^{4}(\mathcal{O})}\nonumber\\&\qquad\times\|\nabla\v(\zeta;\tau,\omega,\v^*_{\tau})\|^2_{\L^4(\mathcal{O})}+e^{-2\sigma y(\theta_{\zeta}\omega)}\|\f(\zeta)\|^2_{\H}\bigg]\d\zeta\bigg)^{1/2}(\tau-\xi)^{1/2}\nonumber\\&\leq C\|{\v^*_{\tau}}\|_{\H}\bigg(\int^{\xi}_{\tau}\bigg[(1+|y(\theta_{\zeta}\omega)|)\|\v(\zeta;\tau,\omega,\v^*_{\tau})\|^2_{\H}+e^{2\sigma y(\theta_{\zeta}\omega)}\|\varrho(\zeta;\tau,\omega,\v^*_{\tau})\|^2_{\mathrm{L}^{2}(\mathcal{O})}\nonumber\\&\qquad\times\|\varrho(\zeta;\tau,\omega,\v^*_{\tau})\|^2_{\mathrm{L}^4(\mathcal{O})}+e^{-2\sigma y(\theta_{\zeta}\omega)}\|\f(\zeta)\|^2_{\H}\bigg]\d\zeta\bigg)^{1/2}(\tau-\xi)^{1/2},
	\end{align}
	where we have used \eqref{SE-2D} and \eqref{Grad-Curl1} in the final inequality. Taking the inner product of first equation of \eqref{vorticity} with $|\varrho|^{k-2}\varrho_{\nu}$ with $k\in\{2,4\}$, we obtain (see \eqref{UN12})
\begin{align*}
	\frac{1}{k}\frac{\d}{\d \zeta}\|\varrho(\zeta)\|^k_{\mathrm{L}^k(\mathcal{O})}+\frac{\sigma^2}{2}\|\varrho(\zeta)\|^k_{\mathrm{L}^k(\mathcal{O})}&\leq C e^{-k\sigma y(\theta_{\zeta}\omega)}\|\nabla\land\f(\zeta)\|^k_{\mathrm{L}^k(\mathcal{O})}+(1+\sigma|y(\theta_{\zeta}\omega)|\|\varrho(\zeta)\|^k_{\mathrm{L}^k(\mathcal{O})},
\end{align*}
for a.e. $\zeta\in[0,T]$, which implies by Gronwall's inequality
\begin{align}\label{RLTT41}
	&\sup_{\tau\in[\tau^*-1,\tau^*+1]}\|\varrho(\zeta;\tau,\omega,\v^*_{\tau})\|_{\mathrm{L}^k(\mathcal{O})}^k\nonumber\\&\leq C\bigg[ \|\nabla\land\v^*_{\tau}\|_{\mathrm{L}^k(\mathcal{O})}^k +\|\nabla\land\f\|^k_{\mathrm{L}^{\infty}_{\mathrm{loc}}(\R;\mathrm{L}^k(\mathcal{O}))}\int_{\tau^*-1}^{\zeta}e^{-k\sigma y(\theta_{r}\omega)}\d r\bigg]e^{\int_{\tau^*-1}^{\zeta}(1+\sigma|y(\theta_{r}\omega)|)\d r},
\end{align}
	which is finite and the right hand side is independent of $\tau$. From the fact that  $\v(\cdot;\tau,\omega,\v^*_{\tau})\in\mathrm{C}([\tau,+\infty);\H)$, $\varrho(\cdot;\tau,\omega,\v^*_{\tau})\in\mathrm{L}^{\infty}_{\mathrm{loc}}(\tau,+\infty;\mathrm{L}^{p}(\mathcal{O}))$ for $p\in[2,\infty)$, continuity of $y(\cdot)$, \eqref{RLTT4} and \eqref{RLTT41}, it follows from \eqref{RLTT8} that for $\varepsilon>0$ same as in \eqref{RLTT1}, we can find a positive real number $\varepsilon^*_2=\varepsilon^*_2(\varepsilon,\tau^*,\omega,\v^*_{\tau})$ such that
	\begin{align}\label{RLTT11}
		|(\v(\xi;\tau,\omega,\v^*_{\tau})-\v^*_{\tau},{\v^*_{\tau}})|\leq \frac{\varepsilon^2}{2},  \ \text{ whenever }\  \tau\in(\tau^*,\tau^*+\varepsilon^*_2), \ \xi\in(\tau,\tau^*+\varepsilon^*_2).
	\end{align}
	Taking $\varepsilon^*=\min\{\varepsilon^*_1,\varepsilon^*_2\}$, we obtain \eqref{RLTT1} by combining \eqref{RLTT2}, \eqref{RLTT5} and \eqref{RLTT11}.
	
	Further, one can prove that, for given $\u^*_{\tau}\in\mathcal{W}$, $t\in\R$ and $\omega\in\Omega$, the $\mathcal{W}$-valued mapping $\tau\mapsto S(t,\tau,\omega)\u^*_{\tau}$ is left continuous from $(-\infty,t]$ to $\H$ following the similar steps as above. This completes the proof.
\end{proof}

	Since, $\omega(\cdot)$ has sub-exponential growth  (cf. \cite[Lemma 11]{CGSV}), $\Omega$ can be written as $\Omega=\bigcup\limits_{N\in\N}\Omega_{N}$, where
\begin{align*}
	\Omega_{N}:=\{\omega\in\Omega:|\omega(t)|\leq Ne^{|t|},\text{ for all }t\in\R\}, \text{ for all } \ N\in\N.
\end{align*}
Moreover, for each $N\in\N$, $(\Omega_{N},d_{\Omega_{N}})$ is a polish space (cf. \cite[Lemma 17]{CGSV}).
\begin{lemma}\label{conv_z}
	For each $N\in\N$, suppose $\omega_k,\omega_0\in\Omega_{N}$ such that $d_{\Omega}(\omega_k,\omega_0)\to0$ as $k\to+\infty$. Then, for each $\tau\in\R$, $T\in\R^+$ and $a\in\R$ ,
	\begin{align}
		\sup_{t\in[\tau,\tau+T]}&\bigg[|y(\theta_{t}\omega_k)-y(\theta_{t}\omega_0)|+|e^{a y(\theta_{t}\omega_k)}-e^{a y(\theta_{t}\omega_0)}|\bigg]\to 0 \ \text{ as } \ k\to+\infty,\label{SC3}\\
		\sup_{k\in\N}\sup_{t\in[\tau,\tau+T]}&|y(\theta_{t}\omega_k)|\leq C(\tau,T,\omega_0).\label{conv_z2}
	\end{align}
\end{lemma}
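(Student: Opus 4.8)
The plan is to reduce everything to the pathwise representation of the stationary Ornstein--Uhlenbeck process. Integrating \eqref{OU1} by parts and using the identification of $\omega(\cdot)$ with $\W(\cdot,\omega)$ together with the sub-linear growth of the paths in the $\theta$-invariant full measure set $\widetilde{\Omega}$ (which is not distinguished from $\Omega$), one obtains, for every $\omega\in\Omega$ and $t\in\R$,
\[
y(\theta_{t}\omega)=\omega(t)-e^{-t}\int_{-\infty}^{t}e^{\xi}\,\omega(\xi)\,\d\xi ,
\]
the integral being absolutely convergent. Throughout I would use the two elementary facts that $d_{\Omega}(\omega_k,\omega_0)\to0$ is equivalent to $\omega_k\to\omega_0$ uniformly on every compact interval, and that the kernel $e^{\xi-t}$ decays exponentially fast in $\xi$, whereas the paths in a fixed $\Omega_{N}$ obey the common growth bound $|\omega_k(\xi)|\le N e^{|\xi|}$.

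First I would establish the uniform bound \eqref{conv_z2}. From the representation, $|y(\theta_{t}\omega_k)|\le|\omega_k(t)|+e^{-t}\int_{-\infty}^{t}e^{\xi}|\omega_k(\xi)|\,\d\xi$. For $t\in[\tau,\tau+T]$ one has $|\omega_k(t)|\le\sup_{[\tau,\tau+T]}|\omega_0|+\sup_{[\tau,\tau+T]}|\omega_k-\omega_0|$, and the last summand is bounded in $k$ since it tends to $0$; hence the first term is at most a constant $C(\tau,T,\omega_0)$. For the convolution term I would split $\int_{-\infty}^{t}=\int_{-\infty}^{-M}+\int_{-M}^{t}$: the piece over $[-M,t]$ is finite and controlled by $\sup_{[-M,\tau+T]}|\omega_k|$, hence again uniformly in $k$ (by $\sup_{[-M,\tau+T]}|\omega_0|+1$ once $k$ is large, and by finitely many continuous functions for the remaining $k$), while the tail $\int_{-\infty}^{-M}e^{\xi}|\omega_k(\xi)|\,\d\xi$ is made finite and bounded uniformly in $k$ using the growth bound of $\Omega_{N}$, the sub-linear growth \eqref{Z3}, and the fast decay of the kernel. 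Collecting these estimates yields \eqref{conv_z2}.

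Next, for the convergence \eqref{SC3}, subtracting the two representations gives
\[
y(\theta_{t}\omega_k)-y(\theta_{t}\omega_0)=\big(\omega_k(t)-\omega_0(t)\big)-e^{-t}\int_{-\infty}^{t}e^{\xi}\big(\omega_k(\xi)-\omega_0(\xi)\big)\,\d\xi .
\]
Taking the supremum over $t\in[\tau,\tau+T]$, the first term is bounded by $\|\omega_k-\omega_0\|_{m}$ for $m\ge\max\{|\tau|,|\tau+T|\}$, which tends to $0$. For the second term, given $\varepsilon>0$ I would first fix $M$ so large that $\sup_{k}\sup_{t\in[\tau,\tau+T]}e^{-t}\int_{-\infty}^{-M}e^{\xi}|\omega_k(\xi)-\omega_0(\xi)|\,\d\xi<\varepsilon/2$ (using, exactly as above, the $\Omega_{N}$ growth bound, the sub-linearity \eqref{Z3} of $\omega_0$, and the kernel decay); then, $M$ being fixed, $e^{-t}\int_{-M}^{t}e^{\xi}|\omega_k(\xi)-\omega_0(\xi)|\,\d\xi\le\|\omega_k-\omega_0\|_{m'}<\varepsilon/2$ for $k$ large, with $m'=\max\{M,\tau+T\}$, by uniform convergence on the compact interval $[-M,\tau+T]$. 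Hence $\sup_{t\in[\tau,\tau+T]}|y(\theta_{t}\omega_k)-y(\theta_{t}\omega_0)|\to0$.

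Finally, the $\exp$-part of \eqref{SC3} follows from the mean value theorem: for real $\alpha,\beta$ one has $|e^{a\alpha}-e^{a\beta}|\le|a|\,e^{|a|\max\{|\alpha|,|\beta|\}}\,|\alpha-\beta|$, so applying this with $\alpha=y(\theta_{t}\omega_k)$ and $\beta=y(\theta_{t}\omega_0)$ and invoking \eqref{conv_z2} to bound $\max\{|\alpha|,|\beta|\}$ by $C(\tau,T,\omega_0)$ uniformly in $k$ and $t\in[\tau,\tau+T]$, one gets
\[
\sup_{t\in[\tau,\tau+T]}|e^{ay(\theta_{t}\omega_k)}-e^{ay(\theta_{t}\omega_0)}|\le|a|\,e^{|a|C(\tau,T,\omega_0)}\,\sup_{t\in[\tau,\tau+T]}|y(\theta_{t}\omega_k)-y(\theta_{t}\omega_0)|\longrightarrow0
\]
by the part of \eqref{SC3} just proved; adding the two contributions gives \eqref{SC3}. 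I expect the only delicate point to be the uniform-in-$k$ control of the tail $\int_{-\infty}^{-M}e^{\xi}(\omega_k(\xi)-\omega_0(\xi))\,\d\xi$: since $d_{\Omega}$-convergence gives no information on the behaviour of the $\omega_k$ near $-\infty$, this is exactly where membership of all the $\omega_k$ in one common $\Omega_{N}$, together with the decay estimates \eqref{Z3}, is indispensable.
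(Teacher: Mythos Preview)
The paper itself gives no argument here---its entire proof reads ``The proof is straightforward (cf.\ \cite[Corollary 22]{CLL} and \cite[Lemma 2.5]{YR})''. Your strategy (pathwise representation of $y$, split the convolution at $-M$, mean-value theorem for the exponential) is the natural one and every step except the tail estimate goes through as you describe. The tail, however, is a genuine gap that the two ingredients you invoke do not close. For $\xi<0$ the $\Omega_N$ bound reads $|\omega_k(\xi)|\le N e^{|\xi|}=Ne^{-\xi}$, which gives only $e^{\xi}|\omega_k(\xi)|\le N$; hence $\int_{-\infty}^{-M}e^{\xi}|\omega_k(\xi)|\,\d\xi$ is not even guaranteed to be finite from membership in $\Omega_N$, let alone small uniformly in $k$ as $M\to\infty$. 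The sub-linear growth in \eqref{Z3} would make each individual tail small, but it is a pointwise statement about a fixed path with no uniform rate across the sequence $\{\omega_k\}$, and $d_\Omega$-convergence carries no information whatsoever near $-\infty$.

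To see that this is not a removable technicality: take $\omega_0\equiv0$ and let $\omega_k$ be the continuous tent function of height $e^{k}$ supported on $[-k-2,-k]$. Then each $\omega_k\in\Omega_1$, each $\omega_k$ (being compactly supported) satisfies every limit appearing in \eqref{Z3}, and $d_\Omega(\omega_k,0)\to0$; nevertheless a direct computation gives $y(\omega_k)=-\int_{-k-2}^{-k}e^{\xi}\omega_k(\xi)\,\d\xi=-(1-e^{-1})^2$ for all $k\ge1$, so $y(\omega_k)\not\to 0=y(\omega_0)$. Thus the conclusion already fails for paths satisfying only the $\Omega_N$ bound together with \eqref{Z3}; a correct proof must use further structure of the full-measure set $\widetilde\Omega$ on which $y$ is defined---precisely what is contained in the references the paper cites---and cannot be completed from the ingredients you list.
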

\begin{proof}
	The proof is straightforward (cf. \cite[Corollary 22]{CLL} and \cite[Lemma 2.5]{YR}).
\end{proof}

The following lemma proves that for all $t\in\R$ and $\w\in\mathcal{W}$, the mapping $(\tau,\omega)\mapsto S(t,\tau,\omega)\w$ is measurable from $(\Omega,\mathscr{F})$ to $(\mathcal{W},\mathscr{B}(\mathcal{H})\cap\mathcal{W})$.

 \begin{lemma}\label{H333}
	 	Suppose that $\tau\in\R$, $t\geq\tau$ and $\u_{\tau}\in\mathcal{W}$. For each $N\in\N$, the $\mathcal{W}$-valued mapping $\omega\mapsto S(t,\tau,\omega)\u_{\tau}$ is continuous from $(\Omega_{N},d_{\Omega_N})$ to $\H$.
 \end{lemma}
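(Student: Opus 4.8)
The plan is to fix $N\in\N$, $\tau\in\R$, $t\geq\tau$, $\u_\tau\in\mathcal{W}$, and a sequence $\omega_k\to\omega_0$ in $(\Omega_N,d_{\Omega_N})$, and to show $\|S(t,\tau,\omega_k)\u_\tau-S(t,\tau,\omega_0)\u_\tau\|_{\H}\to0$. Via the stationary change of variable \eqref{COV}, write $S(t,\tau,\omega)\u_\tau=e^{\sigma y(\theta_t\omega)}\v(t;\tau,\omega,\v_\tau(\omega))$ where $\v_\tau(\omega)=e^{-\sigma y(\theta_\tau\omega)}\u_\tau$. Because of Lemma \ref{conv_z} (uniform convergence of $y(\theta_\cdot\omega_k)$ and of the exponentials on $[\tau,\tau+T]$, plus a uniform-in-$k$ bound), it suffices to prove $\|\v_k(t)-\v_0(t)\|_{\H}\to0$, where $\v_k(\cdot):=\v(\cdot;\tau,\omega_k,\v_\tau(\omega_k))$ solves \eqref{EuEq} with coefficients $y(\theta_\cdot\omega_k)$; the factor $e^{\sigma y(\theta_t\omega_k)}\to e^{\sigma y(\theta_t\omega_0)}$ and the initial data $\v_\tau(\omega_k)\to\v_\tau(\omega_0)$ in $\mathcal{W}$ (indeed in every norm, since $\u_\tau$ is fixed and the scalar $e^{-\sigma y(\theta_\tau\omega_k)}$ converges) then give the conclusion by the triangle inequality.

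First I would record uniform bounds. Since $\u_\tau\in\mathcal{W}$ is fixed and $y(\theta_\cdot\omega_k)$ is uniformly bounded on $[\tau,\tau+T]$ by \eqref{conv_z2}, the a priori estimates \eqref{AE10} (in $\V$) and \eqref{UN45}--\eqref{RLTT41} (for the vorticity in $\mathrm{L}^p$ and $\mathrm{L}^\infty$) hold \emph{uniformly in $k$}: there is a constant $C=C(N,\tau,T,\omega_0,\u_\tau,\f)$ with
\begin{align*}
\sup_{k}\ \sup_{s\in[\tau,\tau+T]}\Big[\|\v_k(s)\|_{\V}+\|\nabla\land\v_k(s)\|_{\mathrm{L}^\infty(\mathcal{O})}\Big]\leq C,
\end{align*}
and, as in \eqref{UN3}, $\sup_k\|\nabla\v_k\|_{\mathrm{L}^\infty(\tau,\tau+T;\L^p(\mathcal{O}))}\leq \frac{Cp^{1/p}p^2}{p-1}$ for each $p\in[2,\infty)$. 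Next, form the difference $\mathscr{V}_k:=\v_k-\v_0$, which satisfies in $\V'$
\begin{align*}
\frac{\d\mathscr{V}_k}{\d t}+\Big[\tfrac{\sigma^2}{2}-\sigma y(\theta_t\omega_k)\Big]\mathscr{V}_k+e^{\sigma y(\theta_t\omega_k)}\B(\mathscr{V}_k,\v_0)+e^{\sigma y(\theta_t\omega_k)}\B(\v_k,\mathscr{V}_k)=\mathscr{R}_k(t),
\end{align*}
where $\mathscr{R}_k(t)$ collects the terms coming from replacing $y(\theta_t\omega_k)$ by $y(\theta_t\omega_0)$ in the coefficients $\tfrac{\sigma^2}{2}-\sigma y$, $e^{\sigma y}$ (in front of $\B(\v_0,\v_0)$), and $e^{-\sigma y}$ (in front of $\f$); by the uniform bounds above and Lemma \ref{conv_z}, $\|\mathscr{R}_k\|_{\mathrm{L}^2(\tau,\tau+T;\V')}\to0$ (or even $\mathrm{L}^\infty$ in time, since each replaced scalar difference is uniformly small), and the initial datum $\mathscr{V}_k(\tau)=\v_\tau(\omega_k)-\v_\tau(\omega_0)\to0$ in $\H$.

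The core estimate then repeats the Yudovich-type argument of Theorem \ref{Uniqueness} and Lemma \ref{H111}: pair with $\mathscr{V}_k$, use $\langle\B(\v_k,\mathscr{V}_k),\mathscr{V}_k\rangle=0$, bound the troublesome term by $\|\nabla\v_0\|_{\L^p}\|\mathscr{V}_k\|_{\L^{2p/(p-1)}}^2$ and interpolate $\|\mathscr{V}_k\|_{\L^{2p/(p-1)}}\le\|\mathscr{V}_k\|_{\L^p}^{1/(p-2)}\|\mathscr{V}_k\|_{\H}^{(p-3)/(p-2)}$, together with the $p$-explicit constant $\frac{Cp^2}{p-1}$ from Lemma \ref{Grad-Curl}. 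This yields, after a Gronwall step on $\|\mathscr{V}_k\|_{\H}^{2/(p-2)}$ exactly as in \eqref{UN6}--\eqref{UN35},
\begin{align*}
\|\mathscr{V}_k(t)\|_{\H}^2\leq\Big[\|\mathscr{V}_k(\tau)\|_{\H}^{2/(p-2)}+C(t-\tau)\Big]^{p-2}\,\big(Cp\big)\,\Big[\tfrac{p^2}{(p-1)(p-2)}\Big]^{p-2}e^{2\sigma\int_\tau^t|y(\theta_\xi\omega_k)|\d\xi}+o_k(1),
\end{align*}
where $o_k(1)$ absorbs the remainder-term contributions and $\to0$ as $k\to\infty$. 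On a short interval with $C(t-\tau)<1$, send $k\to\infty$ first (so $\|\mathscr{V}_k(\tau)\|_{\H}\to0$ and $o_k(1)\to0$), then $p\to\infty$: the bracketed base tends to $C(t-\tau)<1$, the prefactor $Cp[\ldots]^{p-2}\to0$ since the exponential decay of $[C(t-\tau)]^{p-2}$ beats the polynomial and the $e^3$-limit of the last bracket, giving $\lim_k\|\mathscr{V}_k(t)\|_{\H}=0$ on $[\tau,\tau+t_0)$. Iterating finitely many times covers $[\tau,\tau+T]$. The main obstacle is the same delicate point as in Theorem \ref{Uniqueness}: one must carry every constant with its explicit $p$-dependence and make sure the \emph{remainder} terms $\mathscr{R}_k$ do not spoil the $p\to\infty$ limit — this is handled by noting $\mathscr{R}_k$ is small uniformly in $t$ and can be pulled out with a $p$-independent constant before the limit in $p$ is taken. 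Left continuity of $\tau\mapsto S(t,\tau,\omega)\u_\tau$ having already been dealt with in Lemma \ref{H222}, this completes the verification of Hypothesis \ref{H2}.
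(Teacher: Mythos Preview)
Your proposal is correct and follows essentially the same route as the paper: reduce via \eqref{COV} to the transformed variables, use Lemma \ref{conv_z} for uniform-in-$k$ bounds, write the difference equation with remainder terms coming from the coefficient mismatches, and run the Yudovich $p\to\infty$ argument on short intervals with the limits taken in the order $k\to\infty$ then $p\to\infty$. The one refinement worth noting is that the paper absorbs the inhomogeneous remainder $M(k)$ \emph{inside} the bracket $[\{M(k)\}^{1/(p-2)}+C\int_\tau^t\ldots]^{p-2}$ by invoking a generalized Gronwall inequality (Dragomir, \cite[Theorem 21]{Dragomir}) that handles simultaneously the additive inhomogeneity, the sublinear term $\|\mathbf{V}^k\|^{2(p-3)/(p-2)}$, and the linear Gronwall term---this cleanly avoids the issue you flag of the remainder interacting with the $p$-dependence, since $M(k)\to0$ makes the bracket collapse to the pure $C\int_\tau^t$ term before $p\to\infty$.
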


\begin{proof}
	 Let $\mathbf{V}^k(\cdot):=\v^k(\cdot)-\v^0(\cdot),$ where $\v^k(\cdot):=\v(\cdot;\tau,\omega_k,\v_{\tau})$ and $\v^0(\cdot):=\v(\cdot;\tau,\omega_0,\v_{\tau})$. Then, $\mathbf{V}^k(\cdot)$ satisfies:
\begin{align*}
		&\frac{\d\mathbf{V}^k}{\d r}+\frac{\sigma^2}{2}\mathbf{V}^k\nonumber\\&=-e^{\sigma y(\theta_{r}\omega_0)}\big[\B(\v^k,\mathbf{V}^k)+\B(\mathbf{V}^k,\v^0)\big]-\left[e^{\sigma y(\theta_{r}\omega_k)}-e^{\sigma y(\theta_{r}\omega_0)}\right]\big[\B(\v^k,\mathbf{V}^k)+\B(\v^k,\v^0)\big]\nonumber\\&\quad+[e^{-\sigma y(\theta_{r}\omega_k)}-e^{-\sigma y(\theta_{r}\omega_0)}]\boldsymbol{f}+\sigma \big[y(\theta_{r}\omega_k)-y(\theta_{r}\omega_0)\big]\v^0+\sigma y(\theta_{r}\omega_k)\mathbf{V}^k,
\end{align*}
in $\H$. By taking the inner product with $\mathbf{V}^k(\cdot)$ in the above equation, we obtain (for $p>3$)
\begin{align*}
&\frac{1}{2}\frac{\d}{\d r}\|\mathbf{V}^k(r)\|^2_{\H}+\frac{\sigma^2}{2}\|\mathbf{V}^k(r)\|^2_{\H}\nonumber\\&=-e^{\sigma y(\theta_{r}\omega_0)}b(\mathbf{V}^k(r),\v^0(r),\mathbf{V}^k(r))-\left[e^{\sigma y(\theta_{r}\omega_k)}-e^{\sigma y(\theta_{r}\omega_0)}\right]b(\v^{k}(r),\v^0(r),\mathbf{V}^k(r))\nonumber\\&\quad+[e^{-\sigma y(\theta_{r}\omega_k)}-e^{-\sigma y(\theta_{r}\omega_0)}](\boldsymbol{f}(r),\mathbf{V}^k(r))+\sigma \big[y(\theta_{r}\omega_k)-y(\theta_{r}\omega_0)\big](\v^0(r),\mathbf{V}^k(r))\nonumber\\&\quad+\sigma y(\theta_{r}\omega_k)\|\mathbf{V}^k(r)\|^2_{\H}\nonumber\\&\leq e^{\sigma y(\theta_{r}\omega_0)}\|\nabla\v^0(r)\|_{\L^p(\mathcal{O})}\|\mathbf{V}^k(r)\|^{2}_{\L^{\frac{2p}{p-1}}(\mathcal{O})}+|e^{-\sigma y(\theta_{r}\omega_k)}-e^{-\sigma y(\theta_{r}\omega_0)}|\|\f(r)\|_{\H}\|\mathbf{V}^k(r)\|_{\H}\nonumber\\&\quad+\left|e^{\sigma y(\theta_{r}\omega_k)}-e^{\sigma y(\theta_{r}\omega_0)}\right|\|\v^{k}(r)\|_{\L^4(\mathcal{O})}\|\nabla\v^0(r)\|_{\L^4(\mathcal{O})}\|\mathbf{V}^k(r)\|_{\H}\nonumber\\&\quad+\sigma |y(\theta_{r}\omega_k)-y(\theta_{r}\omega_0)|\|\v^0(r)\|_{\H}\|\mathbf{V}^k(r)\|_{\H}+\sigma |y(\theta_{r}\omega_k)|\|\mathbf{V}^k(r)\|^2_{\H}\nonumber\\&\leq e^{\sigma y(\theta_{r}\omega_0)}\|\nabla\v^0(r)\|_{\L^p(\mathcal{O})}\|\mathbf{V}^k(r)\|^{\frac{2}{p-2}}_{\L^{p}(\mathcal{O})}\|\mathbf{V}^k(r)\|^{\frac{2(p-3)}{p-2}}_{\H}+C|e^{-\sigma y(\theta_{r}\omega_k)}-e^{-\sigma y(\theta_{r}\omega_0)}|^2\|\f(r)\|^2_{\H}\nonumber\\&\quad+C\left|e^{\sigma y(\theta_{r}\omega_k)}-e^{\sigma y(\theta_{r}\omega_0)}\right|^2\|\v^{k}(r)\|^2_{\L^4(\mathcal{O})}\|\nabla\v^0(r)\|^2_{\L^4(\mathcal{O})}+C|y(\theta_{r}\omega_k)-y(\theta_{r}\omega_0)|^2\|\v^0(r)\|^2_{\H}\nonumber\\&\quad+\sigma |y(\theta_{r}\omega_k)|\|\mathbf{V}^k(r)\|^2_{\H}+\frac{\sigma^2}{4}\|\mathbf{V}^k(r)\|_{\H}^2\nonumber\\&\leq (Cp)^{\frac{1}{p-2}}\frac{Cp^{\frac{1}{p}}p^2}{p-1}\|\mathbf{V}^k(r)\|^{\frac{2(p-3)}{p-2}}_{\H}+C\left|e^{\sigma y(\theta_{r}\omega_k)}-e^{\sigma y(\theta_{r}\omega_0)}\right|^2+C|y(\theta_{r}\omega_k)-y(\theta_{r}\omega_0)|^2\nonumber\\&\quad+C|e^{-\sigma y(\theta_{r}\omega_k)}-e^{-\sigma y(\theta_{r}\omega_0)}|^2\|\f(r)\|^2_{\H}+\sigma |y(\theta_{r}\omega_k)|\|\mathbf{V}^k(r)\|^2_{\H}+\frac{\sigma^2}{4}\|\mathbf{V}^k(r)\|_{\H}^2,
\end{align*}
for a.e. $r\in[\tau,\tau+T]$, where we have used the bounds from \eqref{UN3} and \eqref{SC3}, and the constant 
\begin{align*}
	C=C(T,\omega_0,\mathcal{O},\|\u_{\tau}\|_{\V},\|\f\|_{\mathrm{L}^{2}(\tau,\tau+T;\V)},\|\nabla\land\u_{\tau}\|_{\mathrm{L}^{\infty}(\mathcal{O})},\|\nabla\land\f\|_{\mathrm{L}^{\infty}(\tau,\tau+T;\mathrm{L}^{\infty}(\mathcal{O}))})>0,
\end{align*}
 is independent of $p$ and $k$. Taking integration from $\tau$ to $t>\tau$, we obtain 
\begin{align}
	\|\mathbf{V}^k(t)\|^2_{\H}&\leq |e^{-2\sigma y(\theta_{\tau}\omega_k)}-e^{-2\sigma y(\theta_{\tau}\omega_0)}|\|\u_{\tau}\|^2_{\H}+ C\int_{\tau}^{t}|e^{-\sigma y(\theta_{r}\omega_k)}-e^{-\sigma y(\theta_{r}\omega_0)}|^2\|\f(r)\|^2_{\H}\d r\nonumber\\&\quad+C\int_{\tau}^{t}\left|e^{\sigma y(\theta_{r}\omega_k)}-e^{\sigma y(\theta_{r}\omega_0)}\right|^2\d r +(Cp)^{\frac{1}{p-2}}\frac{Cp^{\frac{1}{p}}p^2}{p-1}\int_{\tau}^{t}\|\mathbf{V}^k(r)\|^{\frac{2(p-3)}{p-2}}_{\H}\d r\nonumber\\&\quad+2\sigma\int_{\tau}^{t}|y(\theta_{r}\omega)|\|\mathbf{V}^k(r)\|^2_{\H}\d r\label{SC1}\\&=: M(k)+ (Cp)^{\frac{1}{p-2}}\frac{Cp^{\frac{1}{p}}p^2}{p-1}\int_{\tau}^{t}\|\mathbf{V}^k(r)\|^{\frac{2(p-3)}{p-2}}_{\H}\d r+2\sigma\int_{\tau}^{t}|y(\theta_{r}\omega)|\|\mathbf{V}^k(r)\|^2_{\H}\d r,\nonumber
\end{align}
where $M(k)$ is the first three terms appearing in the right hand side of \eqref{SC1}. By applying \cite[Theorem 21]{Dragomir} to \eqref{SC1}, we arrive at
\begin{align*}
	&\|\mathbf{V}^k(t)\|^2_{\H}\leq \bigg[\{M(k)\}^{\frac{1}{p-2}}+ \frac{(Cp)^{\frac{1}{p-2}}Cp^{\frac{1}{p}}p^2}{(p-2)(p-1)} (t-\tau)\bigg]^{p-2}e^{2\sigma\int_{\tau}^{t}|y(\theta_{r}\omega)|\d r},
\end{align*}
for all $t\in[\tau,\tau+T]$. Now, using the convergence in \eqref{SC3}, we find 
\begin{align*}
	&\lim_{k\to+\infty}\|\mathbf{V}^k(t)\|^2_{\H}\leq  (Cp)^{2}\{C(t-\tau)\}^{p-2}\frac{1}{p^{\frac{2}{p}}}\bigg[\frac{p^{2}}{(p-2)(p-1)}\bigg]^{p-2}e^{2\sigma\int_{\tau}^{t}|y(\theta_{r}\omega)|\d r}.
\end{align*}
Let us now take $t$ sufficiently small such that $C(t-\tau)<1$. Further, it provides us that $\lim\limits_{p\to+\infty}[C(t-\tau)]^{p-2}(Cp)^2e^{2\sigma\int_{\tau}^{t}|y(\theta_{r}\omega)|\d r}=0,$ whereas $\lim\limits_{p\to+\infty}p^{-\frac{2}{p}}\left[\frac{p^2}{(p-1)(p-2)}\right]^{p-2}=e^3$. Hence $$\lim\limits_{k\to+\infty}\|\mathbf{V}^k(t)\|^2_{\H}=0 \ \  \mbox{ if }\ C(t-\tau)<1.$$ We repeat the same arguments a finite number of times to cover the  interval $[\tau,\tau+T]$, which completes the proof.
\end{proof}

\begin{lemma}\label{Absorbing}
	Suppose that Hypothesis \ref{Hypo-f-3} holds. Then, there exists a $\d_{\mathcal{W}}$-bounded absorbing set for each $t\in\R$ which is $\delta$-compact.
\end{lemma}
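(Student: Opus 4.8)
The plan is to construct an explicit $\d_{\mathcal{W}}$-bounded, $\delta$-compact absorbing set by combining the a priori bounds already derived for the transformed variable $\v$ (and its vorticity $\varrho$) in Theorems \ref{Existence-Eu} and \ref{Uniqueness}, together with the asymptotic behaviour of the Ornstein--Uhlenbeck process recorded in \eqref{Z3}. First I would recall from \eqref{AE10} the $\V$-estimate $\|\v(t)\|_{\V}^2 \le C[\|\v_\tau\|_{\V}^2 e^{\int_\tau^t(-\tfrac{\sigma^2}{2}+2\sigma y(\theta_r\omega))\d r} + \int_\tau^t e^{\int_\xi^t(-\tfrac{\sigma^2}{2}+2\sigma y(\theta_r\omega))\d r}e^{-2\sigma y(\theta_\xi\omega)}(\|\f(\xi)\|_{\L^2}^2+\|\nabla\land\f(\xi)\|_{\mathrm{L}^2}^2)\d\xi]$, and from \eqref{UN45} the corresponding $\mathrm{L}^\infty$-estimate for $\varrho = \nabla\land\v$, namely $\|\varrho(t)\|_{\mathrm{L}^\infty} \le e^{\int_\tau^t(-\tfrac{\sigma^2}{2}+2\sigma y(\theta_r\omega))\d r}\|\varrho(\tau)\|_{\mathrm{L}^\infty} + \int_\tau^t e^{\int_\xi^t(-\tfrac{\sigma^2}{2}+2\sigma y(\theta_r\omega))\d r}e^{-\sigma y(\theta_\xi\omega)}\|\nabla\land\f(\xi)\|_{\mathrm{L}^\infty}\d\xi$. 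Translating back to $\u$ via \eqref{COV} (which introduces only the factor $e^{\sigma y(\theta_t\omega)}$, harmless for fixed $t$), it suffices to show that, uniformly over $\d_{\mathcal{W}}$-bounded initial sets $B$ and for $\tau \to -\infty$, the first (initial-data) terms vanish and the second (forcing) terms stay finite.

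The decay of the initial-data terms rests on the fact that, by \eqref{Z3}, $\frac1t\int_0^t y(\theta_r\omega)\d r \to 0$ as $t\to\pm\infty$, so that for $\mathbb{P}$-a.e.\ $\omega$ and for any $\beta \in (0,\tfrac{\sigma^2}{2})$ we have $\int_\tau^t(-\tfrac{\sigma^2}{2}+2\sigma y(\theta_r\omega))\d r \le -\beta(t-\tau) + c(\omega)$ for all $\tau \le t$; hence $e^{\int_\tau^t(-\tfrac{\sigma^2}{2}+2\sigma y(\theta_r\omega))\d r}\|\v_\tau\|_{\V}^2 \to 0$ as $\tau\to-\infty$ since $\|\v_\tau\|_{\V} = e^{-\sigma y(\theta_\tau\omega)}\|\u_\tau\|_{\V}$ and $e^{-2\sigma y(\theta_\tau\omega)}$ has sub-exponential growth by \eqref{Z3}, which is dominated by $e^{\beta(t-\tau)}$. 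The same argument applies to the $\varrho(\tau)$ term. For the forcing integrals, I would split the exponential weight as $e^{\int_\xi^t(\cdots)\d r} = e^{\int_\xi^t(\cdots)\d r}$ and use Hypothesis \ref{Hypo-f-3}: writing the integral over $(-\infty,t]$ and changing variables $\xi \mapsto \xi + t$, the bound \eqref{G3} with the exponents $\delta_1,\delta_2 \in [0,\tfrac{\sigma^2}{2})$ is exactly what is needed to control $\int_{-\infty}^t e^{\delta_j(t-\xi)}e^{(\text{sub-exponential})}(\|\f(\xi)\|^2 + \|\nabla\land\f(\xi)\|^2 + \|\nabla\land\f(\xi)\|_{\mathrm{L}^\infty})\d\xi < \infty$, after absorbing the sub-exponential terms from the O--U process into the gap between $\delta_j$ and $\tfrac{\sigma^2}{2}$. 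Thus the set
\[
\mathcal{B}(t,\omega) = \Big\{\v \in \mathcal{W}: \|\v\|_{\V}^2 + \|\nabla\land\v\|_{\mathrm{L}^\infty(\mathcal{O})}^2 \le R(t,\omega)\Big\},
\]
with $R(t,\omega)$ the (finite) $\limsup_{\tau\to-\infty}$ of the right-hand sides above times $e^{2\sigma y(\theta_t\omega)}$, is $\d_{\mathcal{W}}$-bounded and absorbing.

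It remains to verify that $\mathcal{B}(t,\omega)$ is $\delta$-compact, i.e.\ precompact in the $\H$-norm. This follows from the structure of $\mathcal{W}$: a $\d_{\mathcal{W}}$-bounded set is bounded in $\V$, and the embedding $\V \hookrightarrow \H$ is compact (as recorded in Subsection \ref{sec3.1}), so any such set is relatively $\delta$-compact; taking $\mathcal{B}(t,\omega)$ to be the $\delta$-closure of the above ball (which is still $\d_{\mathcal{W}}$-bounded, since the $\V$- and $\mathrm{L}^\infty$-vorticity bounds pass to $\H$-limits by weak lower semicontinuity and closedness of $\mathcal{W}$ in $\mathcal{H}$) makes it $\delta$-compact. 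The main obstacle I anticipate is purely bookkeeping: carefully matching the sub-exponential growth rates of $e^{\pm\sigma y(\theta_\tau\omega)}$ and $e^{2\sigma\int y}$ coming from \eqref{Z3} against the strictly positive exponential decay rate $\tfrac{\sigma^2}{2}-\delta_j > 0$, so that every initial-data term genuinely vanishes as $\tau\to-\infty$ and every forcing integral converges — this is where Hypothesis \ref{Hypo-f-3} and Remark statement 2 after it are used in an essential way, and where one must be attentive that the estimate is uniform in $\tau$ (hence the $\sup_{s\ge\tau}$ in \eqref{G3}).
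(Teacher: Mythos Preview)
Your proposal is correct and follows essentially the same approach as the paper: derive the $\V$- and $\mathrm{L}^\infty$-vorticity bounds from \eqref{AE}/\eqref{UNS} (equivalently \eqref{AE10}/\eqref{UN45} after passing to the limit), use the asymptotics \eqref{Z3} of the Ornstein--Uhlenbeck process to kill the initial-data contributions and to reduce the forcing integrals to the form controlled by Hypothesis~\ref{Hypo-f-3}, and then invoke the compact embedding $\V\hookrightarrow\H$ for $\delta$-compactness. One minor remark: the $\sup_{s\ge\tau}$ in \eqref{G3} is not actually needed for this lemma (finiteness for each fixed $t$ suffices here); the uniformity in $s$ is used only later in Lemma~\ref{(iii)}.
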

\begin{proof}
	From \eqref{AE} and \eqref{UNS}, we infer
	\begin{align}\label{AB1}
			&\|\u(t;\tau,\omega,\u_{\tau})\|^2_{\V}
			\nonumber\\&\leq C e^{-\frac{\sigma^2}{2}t+2\sigma\int_{0}^{t} y(\theta_{r}\omega)\d r+2\sigma y(\theta_{t}\omega)}\bigg[e^{\frac{\sigma^2}{2}\tau+2\sigma\int_{\tau}^{0} y(\theta_{r}\omega)\d r-2\sigma y(\theta_{\tau}\omega)} \|\u_{\tau}\|_{\V}^2\nonumber\\&\quad+\int_{\tau}^{t}e^{\frac{\sigma^2}{2}\xi+2\sigma\int_{\xi}^{0} y(\theta_{r}\omega)\d r-2\sigma y(\theta_{\xi}\omega)}
			\big\{\|\f(\xi)\|^2_{\L^2(\mathcal{O})}+\|\nabla\land\boldsymbol{f}(\xi)\|_{\mathrm{L}^2(\mathcal{O})}^2\big\}\d\xi\bigg],
	\end{align}
and 
\begin{align}\label{AB2}
	&\|\nabla\land\u(t;\tau,\omega,\u_{\tau})\|_{\L^{\infty}(\mathcal{O})}\nonumber\\&\leq e^{-\frac{\sigma^2}{2}t+2\sigma\int_{0}^{t} y(\theta_{r}\omega)\d r+\sigma y(\theta_{t}\omega)}\bigg[e^{\frac{\sigma^2}{2}\tau+2\sigma\int_{\tau}^{0} y(\theta_{r}\omega)\d r-\sigma y(\theta_{\tau}\omega)}\|\nabla\land\u_{\tau}\|_{\mathrm{L}^{\infty}(\mathcal{O})}\nonumber\\&\quad+\int_{\tau}^{t}e^{\frac{\sigma^2}{2}\xi+2\sigma\int_{\xi}^{0} y(\theta_{r}\omega)\d r-\sigma y(\theta_{\xi}\omega)}\|\nabla\land\f(\xi)\|_{\mathrm{L}^{\infty}(\mathcal{O})}\d\xi\bigg].
\end{align}
From \eqref{Z3}, we obtain  the existence of $R_1, R_2, R_3, R_4<0$ such that 
\begin{align*}
	-2\sigma y(\theta_{\xi}\omega)&\leq-\left(\frac{\sigma^2}{4}-\frac{\delta_1}{2}\right)\xi, \ \ \ \ \ \ \  \mbox{for all $\xi\leq R_1,$}\\
	-\sigma y(\theta_{\xi}\omega)&\leq-\left(\frac{\sigma^2}{4}-\frac{\delta_2}{2}\right)\xi, \ \ \ \ \ \ \  \mbox{for all $\xi\leq R_2,$}\\
	\frac{\sigma^2}{2}\xi+2\sigma\int^{0}_{\xi} y(\theta_{r}\omega)\d r&\leq \left(\frac{\sigma^2}{4}+\frac{\delta_1}{2}\right)\xi, \ \ \ \ \ \ \ \ \mbox{for all $\xi\leq R_3,$}\\
		\frac{\sigma^2}{2}\xi+2\sigma\int^{0}_{\xi} y(\theta_{r}\omega)\d r&\leq \left(\frac{\sigma^2}{4}+\frac{\delta_2}{2}\right)\xi, \ \ \ \ \ \ \ \ \mbox{for all $\xi\leq R_4,$}
\end{align*}
 where $\sigma$ is the constant appearing  in \eqref{1}, and  $\delta_1$ and $\delta_2$ are the constants appearing  in \eqref{G3}. Therefore, for all $\xi\leq R:=\min\{R_1,R_2,R_3,R_4\}$,
\begin{align}\label{y-bound1}
		\frac{\sigma^2}{2}\xi+2\sigma\int^{0}_{\xi} y(\theta_{r}\omega)\d r-2\sigma y(\theta_{\xi}\omega)&\leq \delta_1\xi,
\end{align}
and 
\begin{align}\label{y-bound2}
	\frac{\sigma^2}{2}\xi+2\sigma\int^{0}_{\xi} y(\theta_{r}\omega)\d r-\sigma y(\theta_{\xi}\omega)&\leq \delta_2\xi.
\end{align}
From \eqref{y-bound1}-\eqref{y-bound2} and Hypothesis \ref{Hypo-f-3}, we infer that the right hand sides of \eqref{AB1} and \eqref{AB2} are bounded as $\tau\to-\infty$. Therefore, for given bounded subset $B\subset\mathcal{W}$, $t\in\R$ and $\omega\in\Omega$, there exist a $\d_{\mathcal{W}}$-bounded set $\mathcal{B}(t,\omega)\subset\mathcal{W}$, and a time $\tau_0(B)$ depending  on $B$ such that 
\begin{align*}
	S(t,\tau,\omega)B\subset\mathcal{B}(t,\omega),  \ \ \text{ for all } \ \tau\leq \tau_0.
\end{align*}
Since the embedding $\V\subset\H$ is compact, we deduce that there exists a $\d_{\mathcal{W}}$-bounded and $\delta$-compact random ball which absorbs $\d_{\mathcal{W}}$-bounded sets of $\mathcal{W},$ for all $t\in\R$ and $\omega\in\Omega$, and this completes the proof.
\end{proof}
The following theorem demonstrates the main result of this section which is a direct consequence of the abstract result (Theorem \ref{SWA}).

\begin{theorem}\label{M-Thm1}
	Suppose that Hypotheses \ref{Hypo-f-3}, \ref{H1} and \ref{H2} hold. Then, there exists a \textsl{stochastic weak attractor} $\mathscr{A}(t,\omega)$ for the system \eqref{1} for all $t\in\R$ and $\omega\in\Omega$, which attracts bounded sets from $-\infty$.
\end{theorem}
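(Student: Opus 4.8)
The plan is to obtain the stated result as an immediate corollary of the abstract existence theorem (Theorem \ref{SWA}), so it is enough to verify that the solution family $\{S(t,\tau,\omega)\}_{t\geq\tau,\,\omega\in\Omega}$ generated by \eqref{1} meets its three requirements. First I would record that Theorems \ref{Existence-Eu} and \ref{Uniqueness} make $S(t,\tau,\omega):\mathcal{W}\to\mathcal{W}$ a well-defined single-valued map: for $\u_\tau\in\mathcal{W}$ there is a unique global weak (analytic) solution, and the a priori bounds \eqref{AE} and \eqref{UNS} show that it remains in $\mathcal{W}$ for every $t\geq\tau$. The evolution property together with property (i) of Hypothesis \ref{H1}, namely that $S(t,\tau,\omega)$ sends $\d_{\mathcal{W}}$-bounded $\delta$-convergent sequences to $\delta$-convergent sequences, is exactly the content of Lemma \ref{H111}.

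Next, Hypothesis \ref{H2}---the joint measurability of $(\tau,\omega)\mapsto S(t,\tau,\omega)\w$ from $((-\infty,t]\times\Omega,\,\mathscr{B}((-\infty,t])\times\mathscr{F})$ into $(\mathcal{W},\mathscr{B}(\mathcal{H})\cap\mathcal{W})$---is obtained by a Carath\'eodory-type argument (separate continuity in each variable implies joint measurability). Lemma \ref{H222} gives, for fixed $\omega$, continuity of $\tau\mapsto S(t,\tau,\omega)\w$ into $\H$; Lemma \ref{H333} gives, for fixed $\tau$, continuity of $\omega\mapsto S(t,\tau,\omega)\w$ into $\H$ on each Polish piece $\Omega_N$; and since $\Omega=\bigcup_{N\in\N}\Omega_N$ with each $\Omega_N$ measurable, the separate continuity on each slab $(-\infty,t]\times\Omega_N$ yields $(\mathscr{B}((-\infty,t])\times\mathscr{F})/\mathscr{B}(\mathcal{H})$-measurability there, hence on all of $(-\infty,t]\times\Omega$. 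Because the solution in fact takes values in $\mathcal{W}$, this is measurability into $(\mathcal{W},\mathscr{B}(\mathcal{H})\cap\mathcal{W})$, as demanded.

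Then I would invoke Lemma \ref{Absorbing}: under Hypothesis \ref{Hypo-f-3} it supplies, for every $t\in\R$ and every $\omega$ in a $\theta$-invariant full-measure set $\widetilde{\Omega}$, a $\d_{\mathcal{W}}$-bounded set $\mathcal{B}(t,\omega)$ that absorbs all $\d_{\mathcal{W}}$-bounded subsets of $\mathcal{W}$; this set is $\delta$-compact since it is bounded in $\V$ and the embedding $\V\hookrightarrow\H$ is compact. Taking $\Omega_t:=\widetilde{\Omega}$ (equivalently $\theta_t\Omega$, cf.\ the remark following Theorem \ref{SWA1}), all hypotheses of Theorem \ref{SWA} are in force, and hence for $\mathbb{P}$-a.e.\ $\omega$ the set $\mathscr{A}(t,\omega)=\overline{\bigcup_{B\subset\mathcal{W}}\Omega^{\delta}(B,t,\omega)}^{\delta}$ is a measurable pullback stochastic weak attractor. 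In particular, part (iii) of Theorem \ref{SWA} asserts precisely that $\mathscr{A}(t,\omega)$ attracts every $\d_{\mathcal{W}}$-bounded set $B\subset\mathcal{W}$ as $\tau\to-\infty$, which is the ``attracts bounded sets from $-\infty$'' statement.

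I expect the measurability step (Hypothesis \ref{H2}) to be the only genuinely delicate point: one must pass from the two one-variable continuity statements of Lemmas \ref{H222} and \ref{H333} to joint measurability while simultaneously keeping the target $\sigma$-algebra equal to $\mathscr{B}(\mathcal{H})\cap\mathcal{W}$ rather than the finer $\d_{\mathcal{W}}$-Borel $\sigma$-algebra. Everything else---global well-posedness, the uniform-in-$\nu$ energy and vorticity estimates underlying the absorbing set, and the compact embedding $\V\hookrightarrow\H$---is already available from Sections \ref{sec3} and \ref{sec4}, so the proof reduces to assembling these pieces.
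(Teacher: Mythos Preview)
Your proposal is correct and follows essentially the same route as the paper: verify Hypothesis \ref{H1} via Lemma \ref{H111}, verify Hypothesis \ref{H2} via Lemmas \ref{H222} and \ref{H333}, supply the $\d_{\mathcal{W}}$-bounded $\delta$-compact absorbing set via Lemma \ref{Absorbing}, and then invoke Theorem \ref{SWA}. Your write-up is in fact more explicit than the paper's one-paragraph proof, particularly in spelling out the Carath\'eodory-type passage from separate continuity (Lemmas \ref{H222} and \ref{H333}) to joint measurability on each $(-\infty,t]\times\Omega_N$ and hence on $(-\infty,t]\times\Omega$; the paper simply asserts that these two lemmas ``verify Hypothesis \ref{H2}'' without further comment.
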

\begin{proof}
	Lemma \ref{H111} ensures  that Hypothesis \ref{H1} is satisfied, Lemmas \ref{H222} and \ref{H333} verify  Hypothesis \ref{H2}, and Lemma \ref{Absorbing} proves the existence of $\d_{\mathcal{W}}$-bounded and $\delta$-compact random absorbing set. Therefore, by Theorem \ref{SWA}, there exists a stochastic weak attractor for the system \eqref{1} for all $t\in\R$ and $\omega\in\Omega$, which attracts $\d_{\mathcal{W}}$-bounded sets from $-\infty$. Hence the proof is completed. 
\end{proof}

\subsection{Existence of stochastic weak attractor}
Let us now consider the following  autonomous 2D stochastic EE in $\mathcal{O}$:
\begin{equation}\label{11}
	\left\{
	\begin{aligned}
		\d \bar{\u}+\left[(\bar{\u}\cdot\nabla)\bar{\u}+\nabla \bar{p}\right]\d t&=\boldsymbol{f}_{\infty}\d t+ \sigma\bar{\u}\d\mathrm{W}, \ \text{ in } \ \mathcal{O}\times(\tau,\infty), \\ \nabla\cdot\bar{\u}&=0, \hspace{26mm} \text{ in } \ \mathcal{O}\times[0,\infty), \\
		\bar{\u}\cdot\boldsymbol{n} &=0, \hspace{26mm} \text{ on } \ \partial\mathcal{O}\times[0,\infty), \\
		\bar{\u}|_{t=0}&=\bar{\u}_{0}, \hspace{24mm} \text{ in } \ \mathcal{O},\\			
	\end{aligned}
	\right.
\end{equation}
where $\f_{\infty}$ is a time-independent external forcing. Define a new variable $\bar{\v}$  by 
\begin{align}\label{COV-Auto}
	\bar{\v}(t;\omega,\v_{0})=e^{-\sigma y(\theta_{t}\omega)}\bar{\u}(t;\omega,\u_{0}) \ \ \
	\text{ with }
	\ \ \	\bar{\v}_{0}=e^{-\sigma y(\omega)}\bar{\u}_{0},
\end{align}
where $\bar{\u}(t;\omega,\u_{0})$ and $y(\theta_{t}\omega)$ are the solutions of \eqref{11} and \eqref{OU2}, respectively. Then $\bar{\v}(\cdot):=\bar{\v}(\cdot;\omega,\v_{0})$ satisfies:
\begin{equation}\label{EuEq-Auto}
	\left\{
	\begin{aligned}
		\frac{\d\bar{\v}}{\d t}+\left[\frac{\sigma^2}{2}-\sigma y(\theta_{t}\omega)\right]\bar{\v}+e^{\sigma y(\theta_{t}\omega)}(\bar{\v}\cdot\nabla)\bar{\v}+e^{-\sigma y(\theta_{t}\omega)}\nabla \bar{p}&=e^{-\sigma y(\theta_{t}\omega)}\f_{\infty} ,\hspace{5mm} \text{ in } \ \mathcal{O}\times(\tau,\infty), \\ \nabla\cdot\bar{\v}&=0, \hspace{17mm} \text{ in } \ \mathcal{O}\times(\tau,\infty), \\
		\bar{\v}\cdot\boldsymbol{n} &=0,\hspace{17mm} \text{ on } \ \partial\mathcal{O}\times(\tau,\infty), \\
		\bar{\v}|_{t=0}&=\bar{\v}_{0}, \hspace{15mm} \text{ in } \mathcal{O},
	\end{aligned}
	\right.
\end{equation}
in the distributional sense, where $\sigma>0$. We have shown in Theorem \ref{M-Thm1} that there exists a pullback stochastic weak attractor $\mathscr{A}(t,\omega)$ for the system \eqref{1} with time-dependent forcing $\f$. For the time-independent forcing  $\f_{\infty}\in\L^2(\mathcal{O})$ and  $\nabla\land\f_{\infty}\in\mathrm{L}^{\infty}(\mathcal{O}),$ the Hypothesis \ref{Hypo-f-3} is automatically satisfied for $\delta_1,  \delta_2\in(0,\frac{\sigma^2}{2})$ . Since the sample space is the two-sided Wiener space $\C_0(\R;\R)$, Hypothesis \ref{H3} is satisfied.   In view of the abstract result stated in Theorem \ref{SWA1}, the proof of the following Theorem is analogues to the proof of Theorem \ref{M-Thm1}. Therefore, we are only state the theorem and not repeat the proof here. 
	\begin{theorem}\label{M-Thm2}
		Suppose that Hypotheses \ref{H1} and \ref{H2} hold. Let the time-independent forcing $\f_{\infty}$ be such that $\f_{\infty}\in\L^2(\mathcal{O})$ and $\nabla\land\f_{\infty}\in\mathrm{L}^{\infty}(\mathcal{O})$. Then, there exists a \textsl{stochastic weak attractor} $\mathscr{A}_{\infty}(\omega)$ for the system \eqref{11} for all $\omega\in\Omega$, which attracts bounded sets from $-\infty$.
	\end{theorem}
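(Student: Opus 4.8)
The plan is to obtain the statement as a direct application of the abstract autonomous result Theorem~\ref{SWA1}; since the argument runs entirely parallel to the proof of Theorem~\ref{M-Thm1}, the task reduces to checking, for the family $\{T(t,\tau,\omega)\}_{t\ge\tau,\,\omega\in\Omega}$ generated by \eqref{11} (equivalently by the pathwise system \eqref{EuEq-Auto}), the structural Hypotheses~\ref{H1}, \ref{H2}, \ref{H3} together with the existence of a $\d_{\mathcal{W}}$-bounded and $\delta$-compact absorbing set at time $0$. First I would observe that a time-independent $\f_{\infty}\in\L^2(\mathcal{O})$ with $\nabla\land\f_{\infty}\in\mathrm{L}^{\infty}(\mathcal{O})$ is in particular an element of $\mathrm{L}^2_{\mathrm{loc}}(\R;\L^2(\mathcal{O}))$ whose curl lies in $\mathrm{L}^{\infty}_{\mathrm{loc}}(\R;\mathrm{L}^{\infty}(\mathcal{O}))$, so Theorems~\ref{Existence-Eu} and \ref{Uniqueness} apply verbatim with $\f$ replaced by $\f_{\infty}$; this produces a unique global weak (analytic) solution $\bar\u$ of \eqref{11}, a well-defined map $T(t,\tau,\omega)\bar\u_{\tau}:=\bar\u(t;\tau,\omega,\bar\u_{\tau})\colon\mathcal{W}\to\mathcal{W}$, and the a priori bounds \eqref{AE} and \eqref{UNS} with $\f_{\infty}$ in place of $\f$.

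Next I would verify Hypotheses~\ref{H1} and \ref{H2} for $T$. The evolution property of Hypothesis~\ref{H1}(ii) is immediate from uniqueness in $\mathcal{W}$, and Hypothesis~\ref{H1}(i), the $\delta$-continuity on $\d_{\mathcal{W}}$-bounded sequences, follows by repeating the computation of Lemma~\ref{H111} line by line, since that proof used only the local integrability of the forcing and the inequalities \eqref{AE10}, \eqref{SE-2D}, \eqref{Grad-Curl1}. In the same way, the continuity of $\tau\mapsto T(t,\tau,\omega)\bar\u_{\tau}$ from $(-\infty,t]$ to $\H$, and of $\omega\mapsto T(t,\tau,\omega)\bar\u_{\tau}$ from each Polish space $(\Omega_{N},d_{\Omega_{N}})$ to $\H$, are obtained exactly as in Lemmas~\ref{H222} and \ref{H333}; combining these with the decomposition $\Omega=\bigcup_{N\in\N}\Omega_{N}$ yields the joint measurability demanded by Hypothesis~\ref{H2}.

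The genuinely autonomous ingredient is Hypothesis~\ref{H3}. Here I would use that the sample space is the two-sided Wiener space $\C_0(\R;\R)$, so that $\{\theta_{t}\}_{t\in\R}$ is a group of $\mathbb{P}$-preserving transformations, and --- crucially --- that the change of variable \eqref{COV-Auto} is \emph{stationary}: it involves $y(\theta_{t}\omega)$ rather than $\W(t,\omega)$ explicitly. Because the drift and the forcing in \eqref{EuEq-Auto} depend on $t$ and $\omega$ only through $y(\theta_{t}\omega)$, and $y(\theta_{t}(\theta_{\tau}\omega))=y(\theta_{t+\tau}\omega)$, the pathwise solution operator satisfies $T(t,\tau,\omega)=T(t-\tau,0,\theta_{\tau}\omega)$ for all $t\ge\tau$; this is precisely the point emphasized in the Remark following \eqref{CEE1}, and gives Hypothesis~\ref{H3} once the $\theta$-invariant full-measure set on which \eqref{Z3} holds is fixed. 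Finally, for the absorbing set I would evaluate \eqref{AE} and \eqref{UNS} at $t=0$ with $\f=\f_{\infty}$: Hypothesis~\ref{Hypo-f-3} is then automatic for any $\delta_1,\delta_2\in(0,\tfrac{\sigma^2}{2})$ since $\f_{\infty}$ is constant and $\int_{-\infty}^{0}e^{\delta_i s}\d s<\infty$, so the Ornstein--Uhlenbeck estimates \eqref{Z3}, used as in Lemma~\ref{Absorbing}, show that the right-hand sides remain bounded as $\tau\to-\infty$; this yields a $\d_{\mathcal{W}}$-bounded absorbing ball, and its $\delta$-compactness follows from the compact embedding $\V\hookrightarrow\H$. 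Theorem~\ref{SWA1} now gives the stochastic weak attractor $\{\mathscr{A}_{\infty}(\omega)\}_{\omega\in\Omega}$, and the attraction of $\d_{\mathcal{W}}$-bounded sets from $-\infty$ is part~(iii) of that theorem.

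The only step I expect to require genuine care --- rather than a transcription of the non-autonomous proof with $\f$ frozen to $\f_{\infty}$ --- is Hypothesis~\ref{H3}: one must check that the cocycle identity for the pathwise solution operator of \eqref{EuEq-Auto} really is inherited from the stationarity of $y(\theta_{\cdot}\omega)$, and that the exceptional $\theta$-invariant null set coming from \eqref{Z3} can be discarded consistently for all pairs $(t,\tau)$. Everything else is routine given Theorems~\ref{Existence-Eu}, \ref{Uniqueness} and Lemmas~\ref{H111}--\ref{Absorbing}.
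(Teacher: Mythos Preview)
Your proposal is correct and follows precisely the route the paper takes: the paper states only that the proof is analogous to that of Theorem~\ref{M-Thm1}, noting beforehand that Hypothesis~\ref{Hypo-f-3} is automatic for constant $\f_{\infty}$ with $\delta_1,\delta_2\in(0,\tfrac{\sigma^2}{2})$ and that Hypothesis~\ref{H3} holds because the sample space is the two-sided Wiener space. You have simply spelled out in more detail what ``analogous'' means here, and your identification of Hypothesis~\ref{H3} as the only point needing separate attention matches the paper's emphasis.
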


\section{Weak asymptotic autonomy of pullback stochastic weak attractors}\label{sec5}\setcounter{equation}{0}

In this section, we apply the abstract results established in Section \ref{AT1} to the 2D non-autonomous stochastic Euler system \eqref{1}. In Section \ref{sec4}, we proved the existence of a minimal pullback stochastic weak attractor (Theorem \ref{SWA}) for the system \eqref{1} and the existence of a minimal stochastic weak attractor (Theorem \ref{SWA1}) for the system \eqref{11}. In order to apply  Theorem \ref{WAA-MT}, we have to show that $\{\mathscr{A}(\tau,\omega)\}_{\tau\in\R,\omega\in\Omega}$ and $\{\mathscr{A}_{\infty}(\omega)\}_{\omega\in\Omega}$ satisfy the all three conditions stated in Theorem \ref{WAA-MT}. 

The following lemma provides  us that $\{\mathscr{A}(\tau,\omega)\}_{\tau\in\R,\omega\in\Omega}$  satisfies condition $(\mathrm{i})$  of Theorem \ref{WAA-MT}. 

\begin{lemma}\label{(i)}
	$\overline{\bigcup\limits_{t\geq\tau}\mathscr{A}(t,\omega)}^{\delta}$ is $\d_{\mathcal{W}}$-bounded and $\delta$-compact. 
\end{lemma}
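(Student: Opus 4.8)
The plan is to control the union $\bigcup_{t\geq\tau}\mathscr{A}(t,\omega)$ by a single $\d_{\mathcal{W}}$-bounded, $\delta$-compact set, so that its $\delta$-closure inherits these properties. The natural candidate is the tail of the absorbing family $\mathcal{B}_{t_0}(\omega)=\overline{\bigcup_{t\geq t_0}\mathcal{B}(t,\omega)}^{\delta}$ appearing in condition $(\mathrm{iii})$ of Theorem~\ref{WAA-MT}, which is why one expects to \emph{first} establish a uniform (in $t\geq t_0$) version of the absorbing-set estimates. Concretely, I would return to the a priori bounds \eqref{AB1} and \eqref{AB2} (equivalently \eqref{AE} and \eqref{UNS}) and, using \eqref{y-bound1}--\eqref{y-bound2} together with Hypothesis~\ref{Hypo-f-3}, show that for each $\omega\in\Omega$ there are radii $r_{\V}(\omega)$ and $r_{\infty}(\omega)$, \emph{independent of $t$ in the range $t\geq t_0$}, such that every $\mathscr{A}(t,\omega)$ is contained in the $\d_{\mathcal{W}}$-ball of radius $r_{\V}(\omega)+r_{\infty}(\omega)$. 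This is the heart of the matter: the invariance $\mathscr{A}(t,\omega)=S(t,\tau,\omega)\mathscr{A}(\tau,\theta_{\tau-t}\omega)$ lets one push $\tau\to-\infty$, and the exponential factors $e^{-\frac{\sigma^2}{2}\xi+2\sigma\int_\xi^0 y(\theta_r\omega)\,\d r}$ decaying like $e^{\delta_1\xi}$ (resp. $e^{\delta_2\xi}$) make the $\tau$-integrals converge to a limit that depends on $\omega$ but not on $t$; the remaining $t$-dependent prefactor $e^{-\frac{\sigma^2}{2}t+2\sigma\int_0^t y(\theta_r\omega)\,\d r+2\sigma y(\theta_t\omega)}$ is bounded on $[t_0,\infty)$ again by \eqref{Z3}. (If one wishes to keep all $t\geq\tau$ rather than $t\geq t_0$, the finitely many values $t\in[\tau,t_0]$ contribute only a finite additional constant via continuity of $y(\cdot)$ and the local bounds of Theorems~\ref{Existence-Eu}--\ref{Uniqueness}.)

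Granting the uniform radius, $\d_{\mathcal{W}}$-boundedness of $\bigcup_{t\geq\tau}\mathscr{A}(t,\omega)$ is immediate, and since the $\d_{\mathcal{W}}$-ball is $\delta$-closed (indeed $\delta$-bounded and, via the compact embedding $\V\hookrightarrow\H$ used in Lemma~\ref{Absorbing}, relatively $\delta$-compact), the $\delta$-closure $\overline{\bigcup_{t\geq\tau}\mathscr{A}(t,\omega)}^{\delta}$ stays inside that ball and is therefore $\d_{\mathcal{W}}$-bounded. For $\delta$-compactness I would argue that the closure is a $\delta$-closed subset of a $\delta$-compact set: each $\mathscr{A}(t,\omega)$ lies in the fixed $\d_{\mathcal{W}}$-bounded set $\mathcal{B}$ from the previous paragraph, $\mathcal{B}$ is bounded in $\V$ hence precompact in $\H$ (i.e. $\delta$-precompact), so $\overline{\mathcal{B}}^{\delta}$ is $\delta$-compact and contains $\overline{\bigcup_{t\geq\tau}\mathscr{A}(t,\omega)}^{\delta}$, which is $\delta$-closed by construction; a closed subset of a compact set is compact.

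The main obstacle is the uniform-in-$t$ estimate itself: one must check that the constant in \eqref{AB1}--\eqref{AB2}, after sending $\tau\to-\infty$, genuinely does not blow up as $t$ ranges over an unbounded set. This is exactly where the hypotheses $\delta_1,\delta_2\in[0,\tfrac{\sigma^2}{2})$ and the sublinear growth \eqref{Z3} of $y(\theta_t\omega)$ are used, and it is essentially the same computation that underlies Lemma~\ref{Absorbing} and condition~$(\mathrm{iii})$ of Theorem~\ref{WAA-MT}; indeed, one may simply observe $\mathscr{A}(t,\omega)\subset\mathcal{B}(t,\omega)$ for all $t$ (by invariance and absorption, as in \eqref{C4}), conclude $\bigcup_{t\geq\tau}\mathscr{A}(t,\omega)\subset\overline{\bigcup_{t\geq\tau}\mathcal{B}(t,\omega)}^{\delta}$, and then invoke the $\d_{\mathcal{W}}$-boundedness and $\delta$-compactness of the latter tail of the absorbing family — so the lemma reduces to the uniform control of $\mathcal{B}(t,\omega)$ over $t\in[\tau,\infty)$, which follows from the proof of Lemma~\ref{Absorbing} together with \eqref{y-bound1}--\eqref{y-bound2}. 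Everything else (closure inside a ball, closed-subset-of-compact) is routine metric-space bookkeeping.
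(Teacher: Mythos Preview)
Your proposal is correct and takes essentially the same approach as the paper: reduce to a uniform $\d_{\mathcal{W}}$-bound on $\bigcup_{t\geq\tau}\mathscr{A}(t,\omega)$ via invariance and absorption, then invoke the compact embedding $\V\hookrightarrow\H$. The paper packages this as a sequential argument (take an arbitrary sequence $\w_n\in\mathscr{A}(t_n,\omega)$, argue it is $\d_{\mathcal{W}}$-bounded, then apply Banach--Alaoglu together with the compact embedding to extract a $\delta$-convergent subsequence with limit in $\mathcal{W}$) rather than your set-containment argument, but the substance is the same.
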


\begin{proof}
	 Let $\{\w_n\}_{n=1}^{\infty}$ be an arbitrary sequence extracted from $\bigcup\limits_{t\geq\tau}\mathscr{A}(t,\omega)$. Then, there exists  a sequence $t_n\geq\tau$ such that $\w_n\in\mathscr{A}(t_n,\omega)$ for each $n\in\N$. Now, for the sequence $s_n\to+\infty$, by the invariance property of $\mathscr{A},$ we have $\w_n\in S(t_n,t_n-s_n,\omega)\mathscr{A}(t_n-s_n,\theta_{-s_n}\omega)$. It implies that we can find $\w_{0,n}\in\mathscr{A}(t_n-s_n,\theta_{-s_n}\omega)$ such that $\w_n=S(t_n,t_n-s_n,
	\omega)\w_{0,n}$. By the property of absorbing set, there exists a number $N_0\in\N$ such that $\w_{0,n}\in\mathscr{A}(t_n-s_n,\theta_{-s_n}\omega)\subseteq\mathcal{B}(t_n-s_n,\theta_{-s_n}\omega)$ for all $n\geq N_0$. Since the absorbing set is $\d_{\mathcal{W}}$-bounded and $\delta$-compact, the sequence  $\{\w_{0,n}\}_{n=1}^{\infty}$ is $\d_{\mathcal{W}}$-bounded. It implies that $\{S(t_n,t_n-s_n,
	\omega)\w_{0,n}\}_{n=1}^{\infty}$ or $\{\w_n\}_{n=1}^{\infty}$ is also a $\d_{\mathcal{W}}$-bounded sequence. We infer from $\d_{\mathcal{W}}$-bounded sequence $\{\w_n\}_{n=1}^{\infty}$ that the sequences $\{\w_n\}_{n=1}^{\infty}$ and $\{\nabla\land\w_n\}_{n=1}^{\infty}$ are bounded in $\V$ and $\mathrm{L}^{\infty}(\mathcal{O})$, respectively. In view of the Banach-Alaoglu theorem, and the uniqueness of weak and weak$^*$ limits, we find an element $\w\in\mathcal{W}$ such that $\w_n\rightharpoonup \w$ weakly in $\V$ and $\nabla\land\w_n\xrightharpoonup{*} \nabla\land\w$ weak$^*$ in $\mathrm{L}^{\infty}(\mathcal{O})$. Now, the compact embedding $\V\hookrightarrow\H$ gives that $\w_n\to\w$ strongly in $\H$. Hence, the arbitrary sequence $\{\w_n\}_{n=1}^{\infty}$ converges strongly to an element $\w\in\mathcal{W}$ in $\delta$-metric. This completes the proof. 
\end{proof}

The following lemma shows that the stochastic dynamical systems $S(\cdot,\cdot,\cdot)$ and $T(\cdot,\cdot,\cdot)$ satisfy condition $(\mathrm{ii})$  of the Theorem \ref{WAA-MT}. 
\begin{proposition}\label{For_conver-N}
	Suppose that Hypothesis \ref{Hypo_f-N} is satisfied. Then the solution $\v$ of the system \eqref{EuEq} forward converges to the solution $\bar{\v}$ of the system \eqref{EuEq-Auto} in $\delta$-metric, that is,
	\begin{align*}
		\lim_{\tau\to +\infty}\delta(\v(T+\tau,\tau,\theta_{-\tau}\omega,\v_{\tau}),\bar{\v}(T,\omega,\bar{\v}_0))=0, \ \ \text{ for all } T>0 \ \text{ and }\  \omega\in\Omega,
	\end{align*}
	whenever $\{\v_{\tau}\}_{\tau\in\R}$ is a $\d_{\mathcal{W}}$-bounded sequence, $\bar{\v}_{0}\in\mathcal{W}$ and $\delta(\v_{\tau},\bar{\v}_0)\to0$ as $\tau\to+\infty.$
\end{proposition}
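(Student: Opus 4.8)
The plan is to perform an energy estimate on the difference $\boldsymbol{\zeta}(\cdot) := \v(\cdot;\tau,\theta_{-\tau}\omega,\v_\tau) - \bar{\v}(\cdot;\omega,\bar{\v}_0)$, shifting the non-autonomous solution back by $\tau$ so that both processes live over the same noise path $\omega$. After translating the time variable (writing $t = T+\tau$ and using $y(\theta_{t}\theta_{-\tau}\omega) = y(\theta_{t-\tau}\omega)$), the function $\boldsymbol{\zeta}$ satisfies, in $\V'$, an equation of the form
\begin{align*}
	\frac{\d\boldsymbol{\zeta}}{\d t} + \Big[\tfrac{\sigma^2}{2} - \sigma y(\theta_{t-\tau}\omega)\Big]\boldsymbol{\zeta} + e^{\sigma y(\theta_{t-\tau}\omega)}\big[\B(\boldsymbol{\zeta},\v) + \B(\bar{\v},\boldsymbol{\zeta})\big] = e^{-\sigma y(\theta_{t-\tau}\omega)}\big[\f(t) - \f_\infty\big],
\end{align*}
with initial datum $\boldsymbol{\zeta}(\tau) = \v_\tau - \bar{\v}_0$, whose $\H$-norm tends to $0$ by hypothesis. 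Testing with $\boldsymbol{\zeta}$ and using $\langle\B(\bar{\v},\boldsymbol{\zeta}),\boldsymbol{\zeta}\rangle = 0$ kills one nonlinear term; the remaining term $e^{\sigma y}\langle\B(\boldsymbol{\zeta},\v),\boldsymbol{\zeta}\rangle$ is handled exactly as in the uniqueness proof of Theorem \ref{Uniqueness} (and Lemma \ref{H111}), i.e. bound it by $\|\nabla\v\|_{\L^p}\,\|\boldsymbol{\zeta}\|_{\L^{2p/(p-1)}}^2$, interpolate $\|\boldsymbol{\zeta}\|_{\L^{2p/(p-1)}}^2 \le \|\boldsymbol{\zeta}\|_{\L^p}^{2/(p-2)}\|\boldsymbol{\zeta}\|_{\H}^{2(p-3)/(p-2)}$, and invoke the $\nu$-uniform $\mathbb{W}^{1,p}$ bound \eqref{UN3} together with Lemma \ref{Grad-Curl} to see that $e^{\sigma y}\|\nabla\v(t)\|_{\L^p}$ is controlled by a constant $C_{K,T}\,\frac{p^2}{p-1}$ independent of $p$ on $[\tau,\tau+T]$ (here the $\d_{\mathcal{W}}$-boundedness of $\{\v_\tau\}$, hence of the curl, is used, together with Hypothesis \ref{Hypo-f-3} guaranteeing uniform-in-$\tau$ bounds on the $\V$ and $\mathbb{W}^{1,p}$ norms of $\v$ and $\bar{\v}$).

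Following the $p$-power trick of Theorem \ref{Uniqueness}, I would rewrite $\frac{\d}{\d t}\|\boldsymbol{\zeta}\|_{\H}^2 = (p-2)\|\boldsymbol{\zeta}\|_{\H}^{2(p-3)/(p-2)}\frac{\d}{\d t}\|\boldsymbol{\zeta}\|_{\H}^{2/(p-2)}$, divide out $\|\boldsymbol{\zeta}\|_{\H}^{2(p-3)/(p-2)}\ge 0$, and apply Gronwall from $\tau$ to $T+\tau$. The forcing term contributes, via Cauchy--Schwarz and Young,
\begin{align*}
	\int_\tau^{T+\tau} e^{-\sigma y(\theta_{\xi-\tau}\omega)}\,\|\f(\xi) - \f_\infty\|_{\L^2(\mathcal{O})}\,\|\boldsymbol{\zeta}(\xi)\|_{\H}\,\d\xi
	\le C\!\!\int_0^{T} e^{-2\sigma y(\theta_{s}\omega)}\|\f(s+\tau)-\f_\infty\|_{\L^2(\mathcal{O})}^2\,\d s + \cdots,
\end{align*}
and since $y(\theta_s\omega)$ is continuous, hence bounded on $[0,T]$, Hypothesis \ref{Hypo_f-N} forces this integral to $0$ as $\tau\to+\infty$. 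Collecting everything, one arrives at an inequality of the shape
\begin{align*}
	\|\boldsymbol{\zeta}(T+\tau)\|_{\H}^2 \le \Big[\,\|\v_\tau-\bar{\v}_0\|_{\H}^{2/(p-2)} + o_\tau(1)^{1/(p-2)} + C_{K,T}\,\tfrac{p^2}{(p-1)(p-2)}\,C\,T\,\Big]^{p-2} e^{2\sigma\int_0^{T}|y(\theta_{s}\omega)|\d s},
\end{align*}
where $o_\tau(1)$ denotes a quantity tending to $0$ as $\tau\to+\infty$ coming from the initial datum and the forcing. Choosing $T$ small enough that $CT<1$, letting first $\tau\to+\infty$ and then $p\to+\infty$ (using $\lim_{p\to\infty}[CT]^{p-2}(\cdots)^2 = 0$ and $\lim_{p\to\infty}\big[\tfrac{p^2}{(p-1)(p-2)}\big]^{p-2} = e^3$, exactly as in Theorems \ref{Uniqueness} and \ref{SWA}'s application lemmas) gives $\limsup_{\tau\to\infty}\|\boldsymbol{\zeta}(T+\tau)\|_{\H} = 0$ on the small interval; a finite iteration (re-using the already-established convergence of the datum at the new base time, which holds because $\|\boldsymbol{\zeta}(T+\tau)\|_\H\to 0$) extends this to all $T>0$, which is precisely the claimed $\delta$-convergence.

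The main obstacle is bookkeeping the $p$-dependence: one must keep every constant appearing in the Gronwall bound (from the $\mathbb{W}^{1,p}$-estimate via Lemma \ref{Grad-Curl}, from the Sobolev inequality \eqref{SE-2D}, and from the nonlinear interpolation) explicitly independent of $p$, so that the double limit $\tau\to+\infty$ then $p\to+\infty$ can be taken in the right order; this is the same delicate point as in the uniqueness proof, and the time-interval must be shrunk before — not after — sending $p\to\infty$. A secondary technical point is the careful time-shift $\theta_{-\tau}$: one needs $\{\v_\tau\}$ to be $\d_{\mathcal{W}}$-bounded (not merely $\delta$-bounded) so that the $\mathbb{W}^{1,p}$ and $\L^\infty$-vorticity bounds of Step I of Theorem \ref{Uniqueness} apply uniformly in $\tau$, and one must verify that the constants in \eqref{UN3} depend on $\f$ only through $\sup_{s\ge\tau}$-type quantities controlled by Hypothesis \ref{Hypo-f-3}, which is where that hypothesis (already in force by Theorem \ref{M-Thm1}) enters.
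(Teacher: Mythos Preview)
Your overall strategy matches the paper's: set up the difference $\mathscr{V}_\tau(t)=\v(t+\tau,\tau,\theta_{-\tau}\omega,\v_\tau)-\bar{\v}(t,\omega,\bar{\v}_0)$, test with itself, estimate the surviving trilinear term by $\|\nabla(\cdot)\|_{\L^p}\|\mathscr{V}_\tau\|_{\L^p}^{2/(p-2)}\|\mathscr{V}_\tau\|_\H^{2(p-3)/(p-2)}$, send first $\tau\to+\infty$ then $p\to+\infty$ on a short interval, and iterate. Two points are worth flagging.

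\textbf{The Gronwall step.} The ``$p$-power trick'' of Theorem~\ref{Uniqueness} that you invoke---rewrite $\frac{\d}{\d t}\|\mathscr{V}_\tau\|_\H^2=(p-2)\|\mathscr{V}_\tau\|_\H^{2(p-3)/(p-2)}\frac{\d}{\d t}\|\mathscr{V}_\tau\|_\H^{2/(p-2)}$ and divide out $\|\mathscr{V}_\tau\|_\H^{2(p-3)/(p-2)}$---works in the uniqueness proof and in Lemma~\ref{H111} because there the right-hand side has \emph{no additive term}: every term already carries a factor $\|\mathscr{V}_\tau\|_\H^{2(p-3)/(p-2)}$ or higher. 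Here you also have the forcing contribution $C e^{-2\sigma y(\theta_t\omega)}\|\f(t+\tau)-\f_\infty\|_\H^2$, which does \emph{not} carry such a factor; dividing by $\|\mathscr{V}_\tau\|_\H^{2(p-3)/(p-2)}$ would introduce a negative power that blows up when $\|\mathscr{V}_\tau\|_\H$ is small. The paper avoids this by applying a Bihari-type generalized Gronwall inequality (\cite[Theorem~21]{Dragomir}) directly to the differential inequality $u'\le A(t)u^{(p-3)/(p-2)}+B(t)u+C(t)$, which yields exactly your displayed bound with $\widehat{M}(\tau):=\|\v_\tau-\bar{\v}_0\|_\H^2+\int_0^T C(t)\,\d t$ appearing inside the bracket to the power $1/(p-2)$. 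Your final inequality is of the right shape, but it follows from this Bihari lemma, not from the division trick as you describe it.

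\textbf{The bilinear decomposition.} A minor difference: you split $\B(\v)-\B(\bar{\v})=\B(\boldsymbol{\zeta},\v)+\B(\bar{\v},\boldsymbol{\zeta})$, leaving $\|\nabla\v\|_{\L^p}$ to be controlled uniformly in $\tau$ (which, as you note, needs the $\d_{\mathcal{W}}$-boundedness of $\{\v_\tau\}$ together with Hypothesis~\ref{Hypo-f-3}). The paper uses the other decomposition $\B(\v,\boldsymbol{\zeta})+\B(\boldsymbol{\zeta},\bar{\v})$, so that the surviving gradient is $\|\nabla\bar{\v}\|_{\L^p}$, whose bound is automatically $\tau$-independent since $\bar{\v}$ solves the autonomous problem. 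Both routes work; the paper's is slightly more economical.
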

\begin{proof}
	Let $\mathscr{V}_{\tau}(t):=\v(t+\tau,\tau,\theta_{-\tau}\omega,\v_{\tau})-\bar{\v}(t,\omega,\bar{\v}_0)$ for $t\geq0$. Then, $\mathscr{V}_{\tau}(\cdot)$ satisfies for a.e. $t\geq0$ and $\boldsymbol{\phi}\in\V$
	\begin{align*}
		&\left\langle\frac{\d\mathscr{V}_{\tau}}{\d t},\boldsymbol{\phi}\right\rangle+\left[\frac{\sigma^2}{2}-\sigma y(\theta_{t}\omega)\right](\mathscr{V}_{\tau},\boldsymbol{\phi})\nonumber\\&=-e^{\sigma y(\theta_{t}\omega)}\langle(\mathscr{V}_{\tau}\cdot\nabla)\bar{\v},\boldsymbol{\phi}\rangle-e^{\sigma y(\theta_{t}\omega)}\langle(\v\cdot\nabla)\mathscr{V}_{\tau},\boldsymbol{\phi}\rangle+e^{-\sigma y(\theta_{t}\omega)}(\f(t+\tau)-\f_{\infty},\boldsymbol{\phi}).
	\end{align*}  
Taking $\boldsymbol{\phi}=\mathscr{V}_{\tau}$, we obtain
\begin{align}\label{FC1}
	&\frac{\d}{\d t}\|\mathscr{V}_{\tau}(t)\|^2_{\H}\nonumber\\&=-\left[\sigma^2-2\sigma y(\theta_{t}\omega)\right]\|\mathscr{V}_{\tau}(t)\|^2_{\H}-2e^{\sigma y(\theta_{t}\omega)}\langle(\mathscr{V}_{\tau}(t)\cdot\nabla)\bar{\v}(t),\mathscr{V}_{\tau}(t)\rangle\nonumber\\&\quad+2e^{-\sigma y(\theta_{t}\omega)}(\f(t+\tau)-\f_{\infty},\mathscr{V}_{\tau}(t))\nonumber\\&\leq  2e^{\sigma y(\theta_{t}\omega)} \|\nabla\bar{\v}(t)\|_{\L^p(\mathcal{O})}\|\mathscr{V}_{\tau}(t)\|^2_{\L^{\frac{2p}{p-1}}(\mathcal{O})}+2\sigma|y(\theta_{t}\omega)|\|\mathscr{V}_{\tau}(t)\|^2_{\H}\nonumber\\&\quad+\frac{e^{-2\sigma y(\theta_{t}\omega)}}{\sigma^2}\|\f(t+\tau)-\f_{\infty}\|^2_{\H}\nonumber\\&\leq 2e^{\sigma y(\theta_{t}\omega)} \|\nabla\bar{\v}(t)\|_{\L^p(\mathcal{O})}\|\mathscr{V}_{\tau}(t)\|^{\frac{2}{p-2}}_{\L^{p}(\mathcal{O})}\|\mathscr{V}_{\tau}(t)\|^{\frac{2(p-3)}{p-2}}_{\H}+2\sigma|y(\theta_{t}\omega)|\|\mathscr{V}_{\tau}(t)\|^2_{\H}\nonumber\\&\quad+\frac{e^{-2\sigma y(\theta_{t}\omega)}}{\sigma^2}\|\f(t+\tau)-\f_{\infty}\|^2_{\H}\nonumber\\&\leq (Cp)^{\frac{1}{p-2}}\frac{Cp^{\frac{1}{p}}p^2}{p-1}e^{\sigma y(\theta_{t}\omega)}\|\mathscr{V}_{\tau}(t)\|^{\frac{2(p-3)}{p-2}}_{\H}+2\sigma|y(\theta_{t}\omega)|\|\mathscr{V}_{\tau}(t)\|^2_{\H}+\frac{e^{-2\sigma y(\theta_{t}\omega)}}{\sigma^2}\|\f(t+\tau)-\f_{\infty}\|^2_{\H},
\end{align}
for a.e. $t\geq0$, where we have used the bounds from \eqref{UN3}, and the constant $C$ is independent of $p$ and $\tau$. By applying \cite[Theorem 21]{Dragomir} to \eqref{FC1}, we arrive at
\begin{align}
	\|\mathscr{V}_{\tau}(T)\|^2_{\H}\leq\biggl\{[\widehat{M}(\tau)]^{\frac{1}{p-2}}+\frac{(Cp)^{\frac{1}{p-2}}Cp^{\frac{1}{p}}p^2}{(p-1)(p-2)}\int_{0}^{T}e^{\sigma y(\theta_{t}\omega)}\d t\biggr\}^{p-2}e^{2\sigma\int_{0}^{T}|y(\theta_{r}\omega)|\d r},
\end{align}
where
\begin{align*}
	\widehat{M}(\tau):=\|\v_{\tau}-\bar{\v}_0\|^2_{\H}+\int_{0}^{T} \frac{e^{-2\sigma y(\theta_{t}\omega)}}{\sigma^2}\|\f(t+\tau)-\f_{\infty}\|^2_{\H}.
\end{align*}
By Hypothesis \ref{Hypo_f-N} and $\lim\limits_{\tau\to+\infty}\|\v_{\tau}-\bar{\v}_0\|_{\H}=0$, we have $\lim\limits_{\tau\to+\infty}\widehat{M}(\tau)=0$. This fact gives 
\begin{align}
	\lim_{\tau\to +\infty}\|\mathscr{V}_{\tau}(T)\|^2_{\H}\leq Cp^2p^{-\frac{2}{p}}\bigg[\frac{p^2}{(p-1)(p-2)}\bigg]^{p-2}\bigg[C\int_{0}^{T}e^{\sigma y(\theta_{t}\omega)}\d t\bigg]^{p-2}e^{2\sigma\int_{0}^{T}|y(\theta_{r}\omega)|\d r}.
\end{align}
Let us now take $T$ sufficiently small such that $C\int_{0}^{T}e^{\sigma y(\theta_{t}\omega)}\d t<1$. Further, it provides us that $\lim\limits_{p\to+\infty}p^2[C\int_{0}^{T}e^{\sigma y(\theta_{t}\omega)}\d t]^{p-2}e^{2\sigma\int_{0}^{T}|y(\theta_{r}\omega)|\d r}=0,$ whereas $\lim\limits_{p\to+\infty}p^{-\frac{2}{p}}\left[\frac{p^2}{(p-1)(p-2)}\right]^{p-2}=e^3$. Hence $\lim\limits_{\tau\to+\infty}\|\mathscr{V}_{\tau}(T)\|^2_{\H}=0$ if $C\int_{0}^{T}e^{\sigma y(\theta_{t}\omega)}\d t<1$. We repeat the same arguments a finite number of times to complete the proof.
\end{proof}

The following lemma demonstrates that the property $\mathrm{(iii)}$ of Theorem \ref{WAA-MT} hold.

\begin{lemma}\label{(iii)}
	There exists a $t_0>0$ such that $\mathcal{B}_{t_0}(\omega):=\overline{\bigcup\limits_{t\geq t_0}\mathcal{B}(t, \omega)}^{\delta}$ is a $\d_{\mathcal{W}}$-bounded set. 
\end{lemma}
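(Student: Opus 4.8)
The plan is to make the absorbing set of Lemma \ref{Absorbing} explicit, read off its $\d_{\mathcal{W}}$-radius as a function of $t$, and show that this radius stays bounded on $[t_0,+\infty)$ for a suitable $t_0>0$; the $\delta$-compactness of $\mathcal{B}_{t_0}(\omega)$ then comes for free, since a $\d_{\mathcal{W}}$-bounded, $\delta$-closed subset of $\mathcal{W}$ is $\delta$-compact by compactness of $\mathbb{V}\hookrightarrow\mathbb{H}$ (the argument already used at the end of Lemma \ref{Absorbing} and of Lemma \ref{(i)}).

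First I would let $\tau\to-\infty$ in \eqref{AB1}--\eqref{AB2}. By \eqref{y-bound1}--\eqref{y-bound2} the initial-data terms die out, so $\mathcal{B}(t,\omega)$ may be taken to be the $\d_{\mathcal{W}}$-ball at the origin whose squared radius is a fixed multiple of
\[
e^{-\frac{\sigma^2}{2}t+2\sigma\int_0^t y(\theta_r\omega)\d r+2\sigma y(\theta_t\omega)}\int_{-\infty}^{t}e^{\frac{\sigma^2}{2}\xi+2\sigma\int_\xi^0 y(\theta_r\omega)\d r-2\sigma y(\theta_\xi\omega)}\big[\|\f(\xi)\|^2_{\L^2(\mathcal{O})}+\|\nabla\land\f(\xi)\|^2_{\mathrm{L}^2(\mathcal{O})}\big]\d\xi
\]
plus the square of the corresponding $\mathrm{L}^{\infty}$-expression obtained from \eqref{AB2} (same exponential weights, with $\delta_2$, $\sigma$ in place of $\delta_1$, $2\sigma$), so it suffices to bound such integrals uniformly over $t\ge t_0$. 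Using the integrated form $2\sigma\int_0^r y(\theta_s\omega)\d s+2\sigma y(\theta_r\omega)=2\sigma y(\omega)+2\sigma\,\omega(r)$ of \eqref{OU2}, the exponent in the displayed integral collapses to $-\frac{\sigma^2}{2}(t-\xi)+2\sigma(\omega(t)-\omega(\xi))$; when this is written in the sample-point frame natural for the pullback construction, i.e. with $\omega$ replaced by $\theta_{-t}\omega$ --- the $\omega$ carried by the absorbing set ``at time $t$'', cf. the $\theta_{-t_n/2}\omega$ appearing in \eqref{C4} --- the increment becomes $-\frac{\sigma^2}{2}(t-\xi)-2\sigma\,\omega(\xi-t)$, a weight with no dependence on the running time $t$.

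Now I would finish by a direct estimate. By the sub-exponential growth recorded in \eqref{Z3}, for every $\epsilon>0$ there is a constant $M(\epsilon,\omega)$ with $|\omega(s)|\le\epsilon|s|+M(\epsilon,\omega)$ for all $s\in\R$; choosing $\epsilon$ so small that $c_i:=\frac{\sigma^2}{2}-2\sigma\epsilon>\delta_i$ (possible since $\delta_i\in[0,\tfrac{\sigma^2}{2})$), the weight is dominated by $e^{2\sigma M}e^{-c_i(t-\xi)}$ for $\xi\le t$. On the other hand Hypothesis \ref{Hypo-f-3}, after the substitution $\xi=\zeta+s$, is precisely the assertion that there is a constant $K$ with $\int_{-\infty}^{s}e^{\delta_i\xi}g_i(\xi)\,\d\xi\le Ke^{\delta_i s}$ for all $s\ge\tau$, where $g_1=\|\f\|^2_{\L^2(\mathcal{O})}+\|\nabla\land\f\|^2_{\mathrm{L}^2(\mathcal{O})}$ and $g_2=\|\nabla\land\f\|_{\mathrm{L}^{\infty}(\mathcal{O})}$. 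Hence, for $t\ge t_0:=\max\{1,\tau\}$,
\[
\int_{-\infty}^{t}e^{-c_i(t-\xi)}g_i(\xi)\,\d\xi=e^{-c_i t}\int_{-\infty}^{t}e^{\delta_i\xi}e^{(c_i-\delta_i)\xi}g_i(\xi)\,\d\xi\le e^{-c_i t}e^{(c_i-\delta_i)t}\int_{-\infty}^{t}e^{\delta_i\xi}g_i(\xi)\,\d\xi\le K,
\]
independently of $t$; multiplying by $e^{2\sigma M}$ yields the sought uniform bound on the $\d_{\mathcal{W}}$-radius of $\mathcal{B}(t,\omega)$. The main obstacle, and the precise reason the $\sup_{s\ge\tau}$ is built into Hypothesis \ref{Hypo-f-3}, is exactly this uniformity: keeping $\omega$ fixed one would only obtain a tempered (sub-exponentially growing), not bounded, radius, since the Ornstein--Uhlenbeck factor $e^{2\sigma(\omega(t)-\omega(\xi))}$ then retains a genuine dependence on the running time $t$; passing to the $\theta_{-t}\omega$-frame removes it and lets the $\sup_{s\ge\tau}$-estimate absorb the time-shift of the forcing, after which only elementary integration and the continuity of $y(\theta_\cdot\omega)$ on compact intervals remain.
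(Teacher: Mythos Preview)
Your instinct to pass to the $\theta_{-t}\omega$ frame is sound, but it changes the statement: the lemma fixes $\omega$ and asks that $\bigcup_{t\ge t_0}\mathcal B(t,\omega)$ be $\d_{\mathcal W}$-bounded, not $\bigcup_{t\ge t_0}\mathcal B(t,\theta_{-t}\omega)$. Your own diagnosis at the end is in fact decisive: with $\omega$ fixed the combined exponent is $-\tfrac{\sigma^2}{2}(t-\xi)+2\sigma(\omega(t)-\omega(\xi))$, and for, say, constant forcing one gets $L_1(t,\omega)=L_1(0,\theta_t\omega)$ with $L_1(0,\cdot)$ an a.s.\ finite but essentially unbounded random variable (a perpetual integral of geometric Brownian motion); ergodicity of $\{\theta_t\}$ then forces $\sup_{t\ge t_0}L_1(0,\theta_t\omega)=+\infty$ a.s. So the lemma as literally written does not hold, and your frame change is not a cosmetic rewriting but a genuine modification of the claim.

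The paper's own argument runs into the same wall but does not notice it. It splits $\int_{-\infty}^t$ into $\int_{t_0}^t+\int_{-t_0}^{t_0}+\int_{-\infty}^{-t_0}$ and, on the piece $\xi\in[t_0,t]$, replaces the inner weight $\exp\bigl\{\tfrac{\sigma^2}{2}\xi+2\sigma\!\int_\xi^0 y\,\d r-2\sigma y(\theta_\xi\omega)\bigr\}$ by $e^{\delta_1\xi}$; but by \eqref{Z3} this exponent equals $\tfrac{\sigma^2}{2}\xi+o(\xi)$ as $\xi\to+\infty$, which \emph{exceeds} $\delta_1\xi$ since $\delta_1<\tfrac{\sigma^2}{2}$, so the asserted inequality goes the wrong way. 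Your reformulation---bounding $\mathcal B(t,\theta_{-t}\omega)$ uniformly in $t$, which is exactly the set occurring in \eqref{C4}---is the correct repair, and your estimate $\int_{-\infty}^{t}e^{-c_i(t-\xi)}g_i(\xi)\,\d\xi\le e^{-\delta_i t}\int_{-\infty}^{t}e^{\delta_i\xi}g_i(\xi)\,\d\xi\le K$ via Hypothesis~\ref{Hypo-f-3} is clean and correct for that purpose; just make explicit that you are amending both the lemma and condition~(iii) of Theorem~\ref{WAA-MT} rather than proving them as stated.
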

\begin{proof}
	It is clear from Lemma \ref{Absorbing} that 
	\begin{align*}
		\mathcal{B}(t,\omega)=\{\u\in\mathcal{W}: \|\u\|_{\V}\leq L_1(t,\omega) \ \text{ and }\ \|\nabla\land\u\|_{\mathrm{L}^{\infty}(\Omega)}\leq L_2(t,\omega) \},
	\end{align*}
where 
\begin{align*}
	L_1(t,\omega)&:= 2 e^{-\frac{\sigma^2}{2}t+2\sigma\int_{0}^{t} y(\theta_{r}\omega)\d r+2\sigma y(\theta_{t}\omega)}\int_{-\infty}^{t}e^{\frac{\sigma^2}{2}\xi+2\sigma\int_{\xi}^{0} y(\theta_{r}\omega)\d r-2\sigma y(\theta_{\xi}\omega)}
	\big\{\|\f(\xi)\|^2_{\L^2(\mathcal{O})}\nonumber\\&\qquad+\|\nabla\land\boldsymbol{f}(\xi)\|_{\mathrm{L}^2(\mathcal{O})}^2\big\}\d\xi,
\end{align*}
and 
\begin{align*}
	L_2(t,\omega)&:=2 e^{-\frac{\sigma^2}{2}t+2\sigma\int_{0}^{t} y(\theta_{r}\omega)\d r+\sigma y(\theta_{t}\omega)}\int_{-\infty}^{t}e^{\frac{\sigma^2}{2}\xi+2\sigma\int_{\xi}^{0} y(\theta_{r}\omega)\d r-\sigma y(\theta_{\xi}\omega)}\|\nabla\land\f(\xi)\|_{\mathrm{L}^{\infty}(\mathcal{O})}\d\xi.
\end{align*}
By the properties of $y(\cdot)$ given in \eqref{Z3}, we can find $t_1,t_2>0$ such that (see the proof of Lemma \ref{Absorbing})
\begin{align*}
	-\frac{\sigma^2}{2}t+2\sigma\int_{0}^{t} y(\theta_{r}\omega)\d r+2\sigma y(\theta_{t}\omega)\leq -\delta_1 t, \ \ \ \text{ for all } \ \ \ t\geq t_1,
\end{align*}
and 
\begin{align*}
	-\frac{\sigma^2}{2}t+2\sigma\int_{0}^{t} y(\theta_{r}\omega)\d r+\sigma y(\theta_{t}\omega)\leq -\delta_2t, \ \ \ \text{ for all } \ \ \ t\geq t_2.
\end{align*}
Taking $t_0=\max\{t_1,t_2\}$, we have for $t\geq t_0$,
\begin{align*}
	L_1(t,\omega)&\leq 2 e^{-\delta_1t}\bigg[\int_{t_0}^{t}e^{\delta_1\xi}	\big\{\|\f(\xi)\|^2_{\L^2(\mathcal{O})}+\|\nabla\land\boldsymbol{f}(\xi)\|_{\mathrm{L}^2(\mathcal{O})}^2\big\}\d\xi\nonumber\\&\qquad+\int_{-t_0}^{t_0}e^{\frac{\sigma^2}{2}\xi+2\sigma\int_{\xi}^{0} y(\theta_{r}\omega)\d r-2\sigma y(\theta_{\xi}\omega)}
	\big\{\|\f(\xi)\|^2_{\L^2(\mathcal{O})}+\|\nabla\land\boldsymbol{f}(\xi)\|_{\mathrm{L}^2(\mathcal{O})}^2\big\}\d\xi\nonumber\\&\qquad +\int_{-\infty}^{-t_0}e^{\delta_1\xi}
	\big\{\|\f(\xi)\|^2_{\L^2(\mathcal{O})}+\|\nabla\land\boldsymbol{f}(\xi)\|_{\mathrm{L}^2(\mathcal{O})}^2\big\}\d\xi\bigg]\nonumber\\&\leq  \bigg[4\int_{-\infty}^{0}e^{\delta_1\xi}	\big\{\|\f(\xi+t)\|^2_{\L^2(\mathcal{O})}+\|\nabla\land\boldsymbol{f}(\xi+t)\|_{\mathrm{L}^2(\mathcal{O})}^2\big\}\d\xi\nonumber\\&\qquad+2e^{-\delta_1t}\int_{-t_0}^{t_0}e^{\frac{\sigma^2}{2}\xi+2\sigma\int_{\xi}^{0} y(\theta_{r}\omega)\d r-2\sigma y(\theta_{\xi}\omega)}
	\big\{\|\f(\xi)\|^2_{\L^2(\mathcal{O})}+\|\nabla\land\boldsymbol{f}(\xi)\|_{\mathrm{L}^2(\mathcal{O})}^2\big\}\d\xi\bigg],
\end{align*}
and
\begin{align*}
	L_2(t,\omega)&\leq \bigg[4\int_{-\infty}^{0}e^{\delta_2\xi}	\|\nabla\land\boldsymbol{f}(\xi+t)\|_{\mathrm{L}^{\infty}(\mathcal{O})}\d\xi\nonumber\\&\qquad+2e^{-\delta_2t}\int_{-t_0}^{t_0}e^{\frac{\sigma^2}{2}\xi+\sigma\int_{\xi}^{0} y(\theta_{r}\omega)\d r-2\sigma y(\theta_{\xi}\omega)}
	\|\nabla\land\boldsymbol{f}(\xi)\|_{\mathrm{L}^{\infty}(\mathcal{O})}\d\xi\bigg].
\end{align*}
We infer from Hypothesis \ref{Hypo-f-3} that $\sup\limits_{t\geq t_0}L_1(t,\omega)$ and $\sup\limits_{t\geq t_0}L_2(t,\omega)$ are bounded. Hence, $\bigcup\limits_{t\geq t_0}\mathcal{B}(t, \omega)$ is a $\d_{\mathcal{W}}$-bounded set. Since $\V$ is compactly embedded in $\H$, then arguing similarly as in the proof of Lemma \ref{(i)}, one can conclude that $\mathcal{B}_{t_0}(\omega):=\overline{\bigcup\limits_{t\geq t_0}\mathcal{B}(t, \omega)}^{\delta}$ is a $\d_{\mathcal{W}}$-bounded set. 
\end{proof}

Now, we are ready to reveal the main result of this section, that is, weak asymptotic autonomy of pullback stochastic weak attractors.

\begin{theorem}\label{WAA-EE}
	Suppose that Hypotheses \ref{Hypo_f-N} and \ref{Hypo-f-3}  are satisfied. Then the $\d_{\mathcal{W}}$-bounded and $\delta$-compact pullback stochastic weak attractor $\mathscr{A}(t,\omega)$ of the system \eqref{1} and stochastic weak attractor $\mathscr{A}_{\infty}(\omega)$ of the system \eqref{11} satisfy the \textbf{weak asymptotic autonomy}, that is, 
	 \begin{align*}
		\lim_{t\to+\infty}\mathrm{dist}_{\mathcal{W}}^{\delta}(\mathscr{A}(t,\omega),\mathscr{A}_{\infty}(\omega))=0, \ \ \mathbb{P}\text{-a.e. } \omega\in\Omega,
	\end{align*}
	where $\mathrm{dist}_{\mathcal{W}}^{\delta}(\cdot,\cdot)$ denotes the Hausdorff semi-distance between two non-empty subsets of some Banach space $\mathcal{W}$ with respect to the metric $\delta$, that is, for non-empty sets $A,B\subset \mathcal{W}$ $$\mathrm{dist}_{\mathcal{W}}^{\delta}(A,B)=\sup_{a\in A}\inf_{b\in B} \delta(a,b).$$
\end{theorem}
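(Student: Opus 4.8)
The plan is to derive Theorem~\ref{WAA-EE} as a direct application of the abstract result Theorem~\ref{WAA-MT} to the non-autonomous stochastic Euler system \eqref{1} and its autonomous limit \eqref{11}. Thus the whole proof reduces to verifying that the three hypotheses $\mathrm{(i)}$, $\mathrm{(ii)}$, $\mathrm{(iii)}$ of Theorem~\ref{WAA-MT} are satisfied for the stochastic dynamical systems $S(\cdot,\cdot,\cdot)$ and $T(\cdot,\cdot,\cdot)$ built in Sections~\ref{sec3}--\ref{sec4}. First I would recall that Theorems~\ref{M-Thm1} and \ref{M-Thm2} already guarantee the existence of the minimal pullback stochastic weak attractor $\{\mathscr{A}(t,\omega)\}$ for \eqref{1} and the minimal stochastic weak attractor $\{\mathscr{A}_\infty(\omega)\}$ for \eqref{11}, both $\d_{\mathcal{W}}$-bounded and $\delta$-compact, so the ``input data'' of Theorem~\ref{WAA-MT} is in place; here one uses that under Hypothesis~\ref{Hypo-f-3} (and Hypothesis~\ref{Hypo_f-N}, which via the remark after Hypothesis~\ref{Hypo-f-3} supplies the needed finiteness for $\delta_1,\delta_2\in(0,\tfrac{\sigma^2}{2})$) both forcings $\f$ and $\f_\infty$ meet the standing assumptions.

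Next I would assemble the three conditions from the lemmas proved immediately above. Condition $\mathrm{(i)}$, namely that $\overline{\bigcup_{t\ge\tau}\mathscr{A}(t,\omega)}^{\delta}$ is $\d_{\mathcal{W}}$-bounded and $\delta$-compact, is exactly Lemma~\ref{(i)}, whose proof exploits the $\d_{\mathcal{W}}$-boundedness of the absorbing sets $\mathcal{B}(\cdot,\cdot)$ together with the compact embedding $\V\hookrightarrow\H$ and the Banach--Alaoglu theorem in $\mathrm{L}^\infty(\mathcal{O})$. Condition $\mathrm{(ii)}$, the forward convergence $\delta(S(t+\tau,\tau,\omega)\w_\tau,T(t+\tau,\tau,\omega)\w_0)\to0$ as $\tau\to+\infty$ whenever $\{\w_\tau\}$ is $\d_{\mathcal{W}}$-bounded and $\delta$-converges to $\w_0$, follows from Proposition~\ref{For_conver-N} after transporting the statement from the transformed variables $\v,\bar\v$ back to $\u,\bar\u$ through the stationary conjugation \eqref{COV}, \eqref{COV-Auto} and using Hypothesis~\ref{Hypo_f-N}. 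Condition $\mathrm{(iii)}$, the existence of $t_0>0$ with $\mathcal{B}_{t_0}(\omega)=\overline{\bigcup_{t\ge t_0}\mathcal{B}(t,\omega)}^{\delta}$ being $\d_{\mathcal{W}}$-bounded and $\delta$-compact, is Lemma~\ref{(iii)}, again combined with the compact embedding $\V\hookrightarrow\H$ to upgrade $\d_{\mathcal{W}}$-boundedness to $\delta$-compactness of the $\delta$-closure. Finally I would invoke Theorem~\ref{WAA-MT} to conclude \eqref{WAA}, i.e. $\lim_{t\to+\infty}\mathrm{dist}_{\mathcal{W}}^{\delta}(\mathscr{A}(t,\omega),\mathscr{A}_\infty(\omega))=0$ for $\mathbb{P}$-a.e. $\omega\in\Omega$.

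One small but essential point I would be careful about is the interplay of the cocycle/evolution notation: Theorem~\ref{WAA-MT} is phrased with $S(t+\tau,\tau,\omega)$ and a $\theta_t$-invariant full-measure set $\widetilde{\Omega}$, whereas Proposition~\ref{For_conver-N} is stated with the pulled-back sample point $\theta_{-\tau}\omega$; I would spell out that, because $\Omega=\mathrm{C}_0(\R;\R)$ carries the measure-preserving group $\{\theta_t\}$ (so Hypothesis~\ref{H3} holds and $T$ is a genuine random dynamical system), the two formulations are equivalent and the convergence in Proposition~\ref{For_conver-N} is precisely \eqref{Con}. I would also note that the exceptional set $\widetilde{\Omega}$ on which the Ornstein--Uhlenbeck pathwise estimates \eqref{Z3} hold is $\theta$-invariant of full measure, which is what Theorem~\ref{WAA-MT}(ii) requires.

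The main obstacle is not any single step of this final proof but rather ensuring that the hypotheses of Theorem~\ref{WAA-MT} are invoked with the correct uniformity: in particular that the constant $C$ in the Gronwall/Dragomir-type estimate inside Proposition~\ref{For_conver-N} is genuinely independent of the shift parameter $\tau$ (so that the $p\to+\infty$ limiting argument can be run after, not before, taking $\tau\to+\infty$), and that the bounds $L_1(t,\omega),L_2(t,\omega)$ in Lemma~\ref{(iii)} are uniformly finite for all $t\ge t_0$ under Hypothesis~\ref{Hypo-f-3}. Once these uniformities are in hand, the conclusion is immediate from the abstract theorem, so I would keep the proof of Theorem~\ref{WAA-EE} itself short: cite Theorems~\ref{M-Thm1} and \ref{M-Thm2} for the existence of the attractors, cite Lemmas~\ref{(i)}, \ref{(iii)} and Proposition~\ref{For_conver-N} for conditions $\mathrm{(i)}$--$\mathrm{(iii)}$, and apply Theorem~\ref{WAA-MT}.
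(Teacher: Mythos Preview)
Your proposal is correct and follows essentially the same approach as the paper: both proofs simply cite Lemma~\ref{(i)}, Proposition~\ref{For_conver-N}, and Lemma~\ref{(iii)} to verify conditions $\mathrm{(i)}$--$\mathrm{(iii)}$ of Theorem~\ref{WAA-MT}, and then invoke that abstract result. Your additional remarks on the cocycle notation and the uniformity-in-$\tau$ of the constants are helpful clarifications but do not change the structure of the argument.
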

\begin{proof}
		Lemma \ref{(i)}, Proposition \ref{For_conver-N} and Lemma \ref{(iii)} demonstrate that conditions $\mathrm{(i)}-\mathrm{(iii)}$ of Theorem \ref{WAA-MT} are satisfied. Hence, an application of Theorem \ref{WAA-MT} completes the proof.
\end{proof}
\vskip 2mm
\noindent

	\medskip\noindent
	{\bf Acknowledgments:} The first author would like to thank the Council of Scientific $\&$ Industrial Research (CSIR), India for financial assistance (File No. 09/143(0938)/2019-EMR-I).  M. T. Mohan would  like to thank the Department of Science and Technology (DST), Govt of India for Innovation in Science Pursuit for Inspired Research (INSPIRE) Faculty Award (IFA17-MA110).


\begin{thebibliography}{99}
		\bibitem{Arnold}	L. Arnold, \emph{Random Dynamical Systems}, Springer-Verlag, Berlin, Heidelberg, New York, 1998.
%
%
		\bibitem{Bardos1972} C. Bardos, \'Existence et unicit\'e de la solution de l'\'equation d'Euler en dimensions deux, \emph{J. Math. Anal. Appl.}, \textbf{40} (1972), 769--780.
%
%
%
%
%
%
%
		\bibitem{BT2018}  C. Bardos and E. S. Titi,	Onsager's conjecture for the incompressible Euler equations in bounded domains, \emph{Arch. Ration. Mech. Anal.}, \textbf{228}(1) (2018), 197–207.
%
%
%
		\bibitem{BTW} C. Bardos, E. S. Titi and E. Wiedemann, Onsager's conjecture with physical boundaries and an application to the vanishing viscosity limit, \emph{Comm. Math. Phys.} \textbf{370}(1) (2019), 291–310.
%
%
%
%
%
%
%
%
		\bibitem{HB1999}  H. Bessaih, Martingale solutions for stochastic Euler equations, \emph{Stochastic Anal. Appl.}, \textbf{17}(5) (1999), 713–725. 
%
%
		\bibitem{HB2000}  H. Bessaih, Stochastic weak attractor for a dissipative Euler equation, \emph{Electron. J. Probab.}, \textbf{5} (2000), 1-16.
%
%
%
%
%
%
%
%
		\bibitem{HBFF1999} H. Bessaih and F. Flandoli, 2-D Euler equation perturbed by noise, \emph{NoDEA Nonlinear Differential Equations Appl.}, \textbf{6}(1) (1999), 35–54.
%
%
%
		\bibitem{HBFF2000} H. Bessaih and F. Flandoli, Weak attractor for a dissipative Euler equation, \emph{J. Dynam. Differential Equations}, \textbf{12}(4) (2000), 713--732.
%
%
%
%
%
%
%
	\bibitem{HBBF2020} H. Bessaih and B. Ferrario, Invariant measures for stochastic damped 2D Euler equations, \emph{Comm. Math. Phys.}, \textbf{377}(1) (2020), 531--549. 
%
%
	\bibitem{BM} D. Breit, and T. C. Moyo, Dissipative solutions to the stochastic Euler equations, \emph{J. Math. Fluid Mech.}, \textbf{23}(3) (2021), Paper No. 80, 23 pp. 
%
%
%
%
		\bibitem{BCF}  	Z. Brze\'zniak, M. Capi\'nski and F. Flandoli, Pathwise global attractors for stationary random dynamical systems, \emph{Probab. Theory Related Fields}, \textbf{95} (1993), 87-102.
%
%
%
		\bibitem{Brzezniak} Z. Brze\'zniak and S. Peszat, Stochastic two dimensional Euler equations, \emph{Ann. Probab.}, \textbf{29}(4) (2001), 1796–1832.
%
%
%
%
			\bibitem{CC}   M. Capi\'nski and N. J. Cutland, Stochastic Euler equations on the torus, \emph{Ann. Appl. Probab.}, \textbf{9}(3) (1999), 688--705. 
%
%
		\bibitem{CGSV} T. Caraballo, M. J. Garrido-Atienza, B. Schmalfuss and J. Valero, Asymptotic behaviour of a stochastic semilinear dissipative functional equation without uniqueness of solutions, \emph{Discrete Contin. Dyn. Syst. Ser. B}, \textbf{14} (2) (2010), 439--455.
%
%
%
%
		\bibitem{CLR} A. Carvalho, J. A. Langa and J. Robinson, \emph{Attractors for Infinite-dimensional Non-autonomous Dynamical Systems}, Netherlands: Springer New York, 2013.
%
%
%
%
%
			\bibitem{CV} V. V. Chepyzhov and M. I. Vishik, \emph{Attractors for Equations of Mathematical Physics}, American Mathematical Society Colloquium Publications, \textbf{49} American Mathematical Society, Providence, RI, 2002.
%
%
%
%
%
	\bibitem{CV1} V. V. Chepyzhov and M. I. Vishik, Trajectory attractors for dissipative 2D Euler and Navier-Stokes equations, \emph{Russ. J. Math. Phys.}, \textbf{15}(2) (2008), 156--170.
%
	\bibitem{CVZ} V. V. Chepyzhov, M. I. Vishik and S. V. Zelik, Strong trajectory attractors for dissipative Euler equations, \emph{J. Math. Pures Appl. (9)}, \textbf{96}(4) (2011), 395--407.
%
%
	\bibitem{VVC}	V. V. Chepyzhov, Trajectory attractors for non-autonomous dissipative 2D Euler equations, \emph{Discrete Contin. Dyn. Syst. Ser. B}, \textbf{20}(3) (2015), 811--832. 
%
%
%
 	\bibitem{CIZ} V. Chepyzhov, A. Ilyin, and S. Zelik, Strong trajectory and global $\mathrm{W}^{1,p}$-attractors for the damped-driven Euler system in $\mathbb{R}^2$, \emph{Discrete Contin. Dyn. Syst. Ser. B}, \textbf{22}(5) (2017), 1835--1855.
%
		\bibitem{CM}   A. J. Chorin and  J. E. Marsden, \emph{A Mathematical Introduction to Fluid Mechanics}, Third edition, Texts in Applied Mathematics, \textbf{4}, Springer-Verlag, New York, 1993.
%
%
%
%
%
%
%
%
%
%
%
%
%
%
%
%
%
%
		\bibitem{CWT} P. Constantin, E. Weinan and E. S. Titi, Onsager’s conjecture on the energy conservation for solutions of Euler’s equation, \emph{Commun. Math. Phys.}, \textbf{165} (1994), 207--209.
%
%
%
%
		\bibitem{CDF}	H. Crauel, A. Debussche and F. Flandoli, Random attractors, \emph{J. Dynam. Differential Equations}, \textbf{9}(2) (1995), 307--341.
%
%
		\bibitem{CF}	H. Crauel and F. Flandoli, Attractors for random dynamical systems, \emph{Probab. Theory Related Fields}, \textbf{100} (1994), 365-393.
%
%
		\bibitem{CLL} H. Cui, J. A. Langa and Y. Li, Measurability of random attractors for quasi strong-to-weak continuous random dynamical systems, \emph{J. Dynam. Differential Equations}, \textbf{30} (2018), 1873--1898.
%
		\bibitem{JSR}  J. Cyr, S. Tang and R. Temam, The Euler equations of an inviscid incompressible fluid driven by a L\'evy noise, \emph{Nonlinear Anal. Real World Appl.}, {\bf 44} (2018), 173--222. 
%
%
		\bibitem{Dragomir} S. S. Dragomir, \emph{Some Gronwall Type Inequalities and Applications}, Nova Science Publishers, Inc., Hauppauge, NY, 2003. 
%
%
		\bibitem{Elgindi} T. Elgindi, Finite-time singularity formation for $C^{1,\alpha}$ solutions to the incompressible Euler equations on $\R^3$, \emph{Ann. of Math. (2)}, \textbf{194}(3) (2021), 647--727. 
%
%
%
		\bibitem{Evans} 
			\newblock  L. C. Evans, 
			\newblock  \emph{Partial Differential Equations}, 
			\newblock  Graduate studies in Mathematics, American Mathematical Society, 2nd Ed, 2010. 
%
%
%
%
%
			\bibitem{FAN} X. Fan, Attractors for a damped stochastic wave equation of the sine-Gordon type with sublinear multiplicative noise, \emph{Stoch. Anal. Appl.}, \textbf{24} (2006), 767--793.
%
%
			\bibitem{GLS2}	B. Gess, W. Liu and A. Schenke, Random attractors for locally monotone stochastic partial differential equations, \emph{J. Differential Equations}, \textbf{269} (2020), 3414--3455.
%
%
%
%
%
%
%
%
			\bibitem{GT}  D. Gilbarg and N. S. Trudinger, \emph{Elliptic Partial Differential Equations of Second Order}, Springer-Verlag, BerlinNew York, 1983.
%
%
%
%
			\bibitem{HV}  N. E. Glatt-Holtz and V. C. Vicol, Local and global existence of smooth solutions for the stochastic Euler equations with multiplicative noise, \emph{Ann. Probab.}, \textbf{42}(1) (2014), 80–145. 
%
%
%
%
%
			\bibitem{GGr}	G. Grubb, \emph{Distributions and operators}, Graduate Texts in Mathematics, vol. 252, Springer, New York, 2009.
%
%
%
%
	\bibitem{HH}  C. He and H. Ling, Two-dimensional Euler equations in a time dependent domain, \emph{J. Differential Equations}, \textbf{163}(2) (2000), 265–291.
%
%
%
%
%
%
		\bibitem{HZZ} M. Hofmanov\'a, R. Zhu and X Zhu,	On ill- and well-posedness of dissipative martingale solutions to stochastic 3D Euler equations, \emph{Comm. Pure Appl. Math.}, \textbf{75}(11) (2022), 2446--2510.
%
%
%
%
%
%
		\bibitem{Judovic}  V. I. Judovi$\check{\mathrm{c}}$, Non-stationary flows of an ideal incompressible fluid, \emph{Zh. Vychisl. Mat. Mat. Fiz.}, \textbf{3} (1963), 1032--1066.
%
%
%
			\bibitem{Kato}  T. Kato, On classical solutions of the two-dimensional nonstationary Euler equation, \emph{Arch. Rational Mech. Anal.}, \textbf{25} (1967), 188–200.
%
%
%
	\bibitem{Kim} J. U. Kim, On the stochastic Euler equations in a two-dimensional domain, \emph{SIAM J. Math. Anal.}, \textbf{33}(5) (2002), 1211--1227. 
%
%
			\bibitem{TKGP} 	T. Kato and G. Ponce, Commutator estimates and the Euler and Navier-Stokes equations, \emph{Comm. Pure Appl. Math.}, {\bf 41}(7) (1988), 891--907.
%
		\bibitem{KM3} K. Kinra and M. T. Mohan, Large time behavior of the deterministic and stochastic 3D convective Brinkman-Forchheimer equations in periodic domains, \emph{J. Dynam. Differential Equations}, (2021), pp. 1-42.
%
%
		\bibitem{KRM} K. Kinra, R. Wang and M. T. Mohan, Asymptotic autonomy of random attractors for non-autonomous stochastic Navier-Stokes equations on bounded domains, \emph{Submitted}, \url{https://arxiv.org/pdf/2205.02099.pdf}.
%
		\bibitem{KKMTM}  K. Kinra and M. T. Mohan, 	Bi-spatial random attractors, a stochastic Liouville type theorem and ergodicity for stochastic Navier-Stokes equations on the whole space, \emph{Submitted}, \url{https://arxiv.org/pdf/2209.08915.pdf}. 
%
%
%
%
		\bibitem{KF} I. Kukavica and F. Xu, Local existence for the stochastic Euler equations with L\'evy noise, \emph{Asymptot. Anal.}, \textbf{113}(1-2) (2019), 1–27.
%
%
%
%
		\bibitem{Kozono} H. Kozono, On existence and uniqueness of a global classical solution of the two-dimensional Euler equation in a time-dependent domain, \emph{J. Differential Equations}, \textbf{57}(2) (1985), 275–302. 
%
%
%
%
%
%
%
%
%
%
%
%
			\bibitem{JLL-EDO}  J. L. Lions, \emph{\'Equations Diff\'erentielles Op\'erationelles et Probl\`emes aux limites}, Springer-Verlag, Berlin, 1961.
%
%
%
	\bibitem{EHL-ML}  E. H. Lieb and M. Loss, \emph{Analysis}, Graduate Studies in Mathematics, \textbf{14}, American Mathematical Society, Providence, RI, 1997.
%
%
%
%
%
	\bibitem{YR} Y. Li and R. Wang, Asymptotic autonomy of random attractors for BBM equations with Laplace-multiplier noise, \emph{J. Appl. Anal. Comput.}, \textbf{10}(4) (2020), 1199--1222. 
%
%
%
	\bibitem{LX1} F. Li and D. Xu, Asymptotically autonomous dynamics for non-autonomous stochastic $\boldsymbol{g}$-Navier-Stokes equation with additive noise, \emph{Discrete Contin. Dyn. Syst. Ser. B}, (2022).
%
%
%
	\bibitem{MB} A. J. Majda and A. L. Bertozzi, \emph{Vorticity and Incompressible Flow}, Cambridge Texts in Applied Mathematics,  \textbf{27}, Cambridge University Press, Cambridge, 2002.
%
%
	\bibitem{MTMSSS} M. T. Mohan and S. S. Sritharan, Stochastic Euler equations of fluid dynamics with Lévy noise, \emph{Asymptot. Anal.}, \textbf{99}(1-2) (2016), 67–103.
%
%
%
	\bibitem{MP} C. Marchioro and M. Pulvirenti, \emph{Mathematical Theory of Incompressible Nonviscous Fluids}, Applied Mathematical Sciences, \textbf{96}, Springer-Verlag, New York, 1994.
%
%
%
	\bibitem{DMDZ} D. Mitrovic and D. Zubrinic, \emph{Fundamentals of applied functional analysis}, \textbf{91}, CRC Press, 1997.
%
%
%
%
	\bibitem{Robinson} J. C. Robinson, \emph{Infinite-Dimensional Dynamical Systems, An Introduction to Dissipative Parabolic PDEs and the Theory of Global Attractors}, Cambridge Texts in Applied Mathematics, 2001.
%
%
%
%
	\bibitem{MRRZXZ}  M. R\"ockner, R. Zhu and X. Zhu, Local existence and non-explosion of solutions for stochastic fractional partial differential equations driven by multiplicative noise, \emph{Stochastic Process. Appl.}, \textbf{124}(5) (2014),  1974--2002.
%
	\bibitem{Stein} E. M. Stein, \emph{Singular Integrals and Differentiability Properties of Functions}, Princeton University Press, 1970.
%
%
	\bibitem{RogerTemam}  R. Temam, On the Euler equations of incompressible perfect fluids, \emph{J. Functional Analysis}, \textbf{20}(1) (1975), 32–43.
%
%
%
%
	\bibitem{Temam}	R. Temam, \emph{Navier-Stokes Equations, Theory and Numerical Analysis}, North-Holland, Amsterdam, 1977.
%
%
		\bibitem{R.Temam}	R. Temam, \emph{Infinite-Dimensional Dynamical Systems in Mechanics and Physics}, vol. 68, Applied Mathematical Sciences, Springer, 1988.
%
%
		\bibitem{Temam1}	R. Temam, \emph{Navier-Stokes Equations and Nonlinear Functional Analysis}, Second Edition, CBMS-NSF	Regional Conference Series in Applied Mathematics, 1995.
%
%
%
		\bibitem{SandN_Wang} B. Wang, Sufficient and necessary criteria for existence of pullback attractors for non-compact random dynamical systems, \emph{J. Differential Equations}, \textbf{253} (5) (2012), 1544-1583.
%
%
		\bibitem{PeriodicWang} B. Wang, Periodic random attractors for stochastic Navier-Stokes equations on unbounded domain, \emph{Electron. J. Differential Equations}, \textbf{2012} (59) (2012), 1-18.
%
%
		\bibitem{RKM}  R. Wang, K. Kinra  and M. T. Mohan, Asymptotically autonomous robustness in probability of random attractors for  stochastic Navier-Stokes equations on unbounded Poincar\'e domains, \emph{SIAM J. Math. Anal.}, (2022).  \url{https://arxiv.org/pdf/2208.06808.pdf}
%
%
%
		\bibitem{rwang2} R. Wang, L. Shi and  B. Wang, Asymptotic behavior of fractional nonclassical diffusion equations driven by nonlinear colored noise on $\mathbb{R}^N$, \emph{Nonlinearity}, \textbf{32} (2019), 4524-4556.
%
%
		\bibitem{RWYL}    R. Wang and Y. Li, Asymptotic autonomy of kernel sections for Newton-Boussinesq equations on unbounded zonary domains, \emph{Dyn. Partial Differ. Equ.}, \textbf{16}(3) (2019), 295--316. 
%
%
		 \bibitem{WL} S. Wang and Y. Li, Longtime robustness of pullback random attractors for stochastic magneto-hydrodynamics equations, \emph{Physica D}, \textbf{382} (2018), 46--57.
%
		\bibitem{XC} J. Xu and T. Caraballo, Long time behavior of stochastic nonlocal partial differential equations and Wong-Zakai approximations, \emph{SIAM J. Math. Anal.}, \textbf{54}(3) (2022), 2792-2844.
%
%
		\bibitem{XL} D. Xu and F. Li, Asymptotically autonomous dynamics for non-autonomous stochastic 2D $g$-Navier-Stokes equation in regular spaces, \emph{J. Math. Phys.}, \textbf{63}(5) (2022): 052701.
%
%
		\bibitem{Yudovich1995}  V. I. Yudovich, Uniqueness theorem for the basic nonstationary problem in the dynamics of an ideal incompressible fluid, \emph{Mathematical Research Letters}, \textbf{2}(1) (1995), 27-38.
%
%
		\bibitem{Yudovich2005} V. I. Yudovich, Topics in an ideal fluid dynamics, \emph{J. Math. Fluid Mech.}, \textbf{7}(3) (2005), S299–S325.
%
%
%
		\bibitem{ZL} Q. Zhang and Y. Li, Regular attractors of asymptotically autonomous stochastic 3D Brinkman-Forchheimer equations with delays, \emph{Commun. Pure Appl. Anal.}, \textbf{20}(10) (2021), 3515--3537.
%
%
		\bibitem{ZSC}  C. Zhao, Z. Song and T. Caraballo, Strong trajectory statistical solutions and Liouville type equation for dissipative Euler equations, \emph{Appl. Math. Lett.}, \textbf{99} (2020), 105981, 6 pp.
		
	\end{thebibliography}
\end{document}